\numberwithin{equation}{section}
\theoremstyle{plain}
\newtheorem{thm}{Theorem}[section]
\theoremstyle{remark}
\newtheorem{rem}{Remark}[section]
\newtheorem{lem}[thm]{Lemma}
\newtheorem{cor}[thm]{Corollary}
\newtheorem{prop}[thm]{Proposition}
\newtheorem{defn}[thm]{Definition}
\newtheorem{ass}[thm]{Assumption}
\newtheorem{ex}[thm]{Example}
\newenvironment{enumeratei}{\begin{enumerate}[\upshape (i)]}{\end{enumerate}}
\newcommand{\Var}[0]{\text{Var}}
\newcommand{\diag}[0]{\text{diag}}
\newcommand{\R}[0]{\mathbb{R}}
\newcommand{\Prob}[0]{\mathds{P}}
\newcommand{\SNR}{\mathsf{SNR}}
\newcommand{\subg}[0]{\text{sub-gaussian}}
\newcommand{\HS}[0]{\text{HS}}
\newcommand{\op}[0]{\text{op}}
\newcommand{\Image}[0]{\text{Im}}
\renewcommand{\leq}{\leqslant} 
\renewcommand{\geq}{\geqslant}
\def\qed{ \hfill $\blacksquare$}  
\newcommand{\cB}{\mathcal{B}}
\newcommand{\cG}{\mathcal{G}}
\newcommand{\cN}{\mathcal{N}}
\newcommand{\cX}{\mathcal{X}}
\newcommand{\vone}{\mathbf{1}}
\newcommand{\bH}{\mathbb{H}}
\newcommand{\bL}{\mathbb{L}}
\newcommand{\bR}{\mathbb{R}}
\newcommand{\bS}{\mathbb{S}}
\newcommand{\bX}{\mathbb{X}}
\newcommand{\dP}{\mathds{P}}
\newcommand{\sB}{\mathscr{B}}
\newcommand{\sC}{\mathscr{C}}
\DeclareMathOperator{\E}{\mathds{E}}
\DeclareMathOperator{\argmax}{argmax}
\DeclareMathOperator{\tr}{tr} 
\begin{document}

\begin{frontmatter}
\title{Hanson-Wright inequality in Hilbert spaces with application to $K$-means clustering for non-Euclidean data}
\runtitle{Hanson-Wright inequality in Hilbert spaces}

\begin{aug}
\author{\fnms{Xiaohui} \snm{Chen}\thanksref{a,b,e1,e3}\ead[label=e1,mark]{xhchen@illinois.edu}\ead[label=e3,mark]{xiaohui@mit.edu}}
\and
\author{\fnms{Yun} \snm{Yang}\thanksref{a,e2}\ead[label=e2,mark]{yy84@illinois.edu}}

\address[a]{Department of Statistics\newline
University of Illinois at Urbana-Champaign\newline
725 S. Wright Street, Champaign, IL 61820, USA.
\printead{e1,e2}}

\address[b]{Institute for Data, Systems, and Society\newline
Massachusetts Institute of Technology\newline
77 Massachusetts Avenue, Cambridge, MA, 02139-4307, USA.\newline
\printead{e3}}

\runauthor{X. Chen and Y. Yang}

\affiliation{Some University and Another University}

\end{aug}

\begin{abstract}
We derive a dimension-free Hanson-Wright inequality for quadratic forms of independent sub-gaussian random variables in a separable Hilbert space. Our inequality is an infinite-dimensional generalization of the classical Hanson-Wright inequality for finite-dimensional Euclidean random vectors. We illustrate an application to the generalized $K$-means clustering problem for non-Euclidean data. Specifically, we establish the exponential rate of convergence for a semidefinite relaxation of the generalized $K$-means, which together with a simple rounding algorithm imply the exact recovery of the true clustering structure.
\end{abstract}

\begin{keyword}
\kwd{Hanson-Wright inequality}
\kwd{Hilbert space}
\kwd{$K$-means}
\kwd{semidefinite relaxation}
\end{keyword}

\end{frontmatter}

\section{Introduction}
\label{sec:introduction}

The Hanson-Wright inequality is a fundamental tool for studying the concentration phenomenon for quadratic forms in sub-gaussian random variables \cite{HansonWright1971_AMS,Wright1973_AoP}. Recently, it has triggered a wide range of statistical applications such as semidefinite programming (SDP) relaxations for $K$-means clustering \cite{Royer2017_NIPS,GiraudVerzelen2018} and Gaussian approximation bounds for high-dimensional $U$-statistics (of order two) \cite{chen2018a}. Classical form of the Hanson-Wright inequality bounds the tail probability for the quadratic form of a finite-dimensional random vector in a Euclidean space. Below is a version that is frequently cited in literature (cf. Theorem 1.1 in \cite{RudelsonVershynin2013_ECP}). 

\begin{thm}[Hanson-Wright inequality for quadratic forms of independent sub-gaussian random variables in $\bR$]
\label{thm:Hanson-Wright_ineq_classical}
Let $X = (X_{1},\dots,X_{n}) \in \bR^{n}$ be a random vector with independent components $X_{i}$ such that $\E[X_{i}]=0$ and $\|X_{i}\|_{\psi_{2}} := \sup_{q \geq 1} q^{-1/2} (\E|X_{i}|^{q})^{1/q} \leq L$. Let $A$ be an $n \times n$ matrix. Then there exists a universal constant $C > 0$ such that for every $t > 0$, 
\begin{equation}
\label{eqn:Hanson-Wright_ineq_classical}
\Prob ( |X^{T} A X - \E[X^{T} A X]| \geq t ) \leq 2 \exp \left[ -C \min\left( {t^{2} \over L^{4} \|A\|_{\HS}^{2}}, {t \over L^{2} \|A\|_{\op}} \right) \right],
\end{equation}
where $\|A\|_{\HS} = (\sum_{i,j=1}^{n} a_{ij}^{2})^{1/2}$ is the Hilbert-Schmidt (i.e., Frobenius) norm of $A$ and $\|A\|_{\op} = \max_{\{x \in \bR^{n}: \|x\|_{2}=1\}} \|A x\|_{2}$ is the $\ell_{2} \to \ell_{2}$ operator (i.e., spectral) norm of $A$. 
\end{thm}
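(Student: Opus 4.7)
The plan is to follow the decoupling strategy of Rudelson and Vershynin. First I would split the centered quadratic form into a diagonal part and an off-diagonal part,
\begin{equation*}
X^{T} A X - \E[X^{T} A X] = \sum_{i=1}^{n} a_{ii}(X_{i}^{2} - \E X_{i}^{2}) \;+\; \sum_{i \neq j} a_{ij} X_{i} X_{j} \;=:\; S_{1} + S_{2},
\end{equation*}
and bound $\Prob(|S_{1}| \geq t/2)$ and $\Prob(|S_{2}| \geq t/2)$ separately, then combine by a union bound.

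For $S_{1}$, each summand $a_{ii}(X_{i}^{2} - \E X_{i}^{2})$ is a centered sub-exponential random variable with $\psi_{1}$-norm of order $L^{2} |a_{ii}|$, since squares of sub-gaussian variables are sub-exponential. A direct application of Bernstein's inequality for independent sub-exponential summands yields a two-regime tail in terms of $L^{4} \sum_{i} a_{ii}^{2}$ and $L^{2} \max_{i} |a_{ii}|$, which is dominated by the desired bound using the inequalities $\sum_{i} a_{ii}^{2} \leq \|A\|_{\HS}^{2}$ and $\max_{i}|a_{ii}| \leq \|A\|_{\op}$.

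For $S_{2}$ the key step is decoupling. Introducing an independent copy $X' = (X_{1}',\dots,X_{n}')$ and i.i.d.\ Bernoulli$(1/2)$ selectors $\delta_{i}$ with random index set $I = \{i : \delta_{i}=1\}$, a standard symmetrization / convexity argument shows that for any $\lambda \in \bR$,
\begin{equation*}
\E \exp(\lambda S_{2}) \leq \E \exp\Bigl(4\lambda \sum_{i \in I, j \in I^{c}} a_{ij} X_{i} X_{j}'\Bigr).
\end{equation*}
Conditioning on $(X_{i})_{i \in I}$ and $\delta$, the exponent is linear in the independent sub-gaussian variables $(X_{j}')_{j \in I^{c}}$ with coefficients $c_{j} = 4\lambda \sum_{i \in I} a_{ij} X_{i}$; hence the conditional MGF is at most $\exp(C L^{2} \sum_{j \in I^{c}} c_{j}^{2}) = \exp(C L^{2} \|4\lambda A_{I^{c}, I} X_{I}\|_{2}^{2})$. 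Taking expectation over $X_{I}$ reduces the problem to a quadratic-form MGF bound for a positive semidefinite matrix, namely $\E \exp(\alpha \|B X_{I}\|_{2}^{2}) \leq \exp(C \alpha L^{2} \|B\|_{\HS}^{2})$ valid for $\alpha \leq c/(L^{2}\|B\|_{\op}^{2})$, where $B = A_{I^{c},I}$ satisfies $\|B\|_{\HS} \leq \|A\|_{\HS}$ and $\|B\|_{\op} \leq \|A\|_{\op}$. Chaining these bounds gives $\E \exp(\lambda S_{2}) \leq \exp(C L^{4} \lambda^{2} \|A\|_{\HS}^{2})$ for $|\lambda| \leq c/(L^{2}\|A\|_{\op})$, and a Chernoff optimization over $\lambda$ delivers the desired sub-gaussian/sub-exponential mixed tail for $S_{2}$.

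The main obstacle I anticipate is the quadratic-form MGF estimate $\E \exp(\alpha \|B X_{I}\|_{2}^{2}) \leq \exp(C \alpha L^{2} \|B\|_{\HS}^{2})$ for the conditional step: this is itself a small Hanson-Wright-type inequality, but restricted to PSD matrices, so it can be established by diagonalizing $B^{T}B$ and using one-dimensional sub-gaussian moment bounds on each eigencoordinate, carefully tracking the radius-of-convergence constraint $\alpha \lesssim 1/(L^{2}\|B\|_{\op}^{2})$. Everything else is essentially bookkeeping to ensure the constants $L$ and the norms $\|A\|_{\HS}$, $\|A\|_{\op}$ propagate through the decoupling and conditioning steps without inflation.
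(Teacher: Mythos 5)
Your architecture is the standard one and matches how this paper handles the analogous statements: the paper does not reprove Theorem \ref{thm:Hanson-Wright_ineq_classical} (it cites Rudelson--Vershynin), but its own proofs of the Hilbert-space generalizations (Proposition \ref{prop:Hanson-Wright_ineq_Hilbert_space_diagonal_free} and Theorems \ref{thm:Hanson-Wright_ineq_Hilbert_space_nonnegative_diagonal}, \ref{thm:HW_ineq_Hilbert_space}) use exactly your split into diagonal plus off-diagonal, a Bernstein-type bound for the diagonal, Bernoulli-selector decoupling for the off-diagonal, a conditional sub-gaussian MGF bound, a quadratic-form MGF estimate, and a Chernoff optimization giving the two-regime tail. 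The only cosmetic difference is that you introduce an independent copy $X'$; since the $X_i$ are independent, conditioning on the selector set already makes the two blocks independent, so the copy is unnecessary (though harmless, by the convex decoupling inequality).

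The one place you should be careful is your proposed proof of the key sub-lemma $\E \exp\bigl(\alpha \|B X_I\|_2^2\bigr) \leq \exp\bigl(C\alpha L^2 \|B\|_{\HS}^2\bigr)$ for $\alpha \leq c/(L^2\|B\|_{\op}^2)$. Diagonalizing $B^T B$ writes $\|BX_I\|_2^2 = \sum_k s_k^2 \langle v_k, X_I\rangle^2$, but the eigencoordinates $\langle v_k, X_I\rangle$ are \emph{not} independent for a general sub-gaussian vector with independent entries, so the MGF does not factor over $k$; and patching this with H\"older forces the more restrictive constraint $\alpha \lesssim 1/(L^2\|B\|_{\HS}^2)$, which would degrade the linear regime of the tail from $\|A\|_{\op}$ to $\|A\|_{\HS}$. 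The clean fix is the Gaussian comparison argument: since $\E e^{z^T B X_I} \leq e^{L^2 \|B^T z\|_2^2/2}$, compare with $Z \sim N(0, L^2 BB^T)$ as in Lemma \ref{lem:mgf_bound_subgaussian}, and then the Gaussian chi-square computation gives exactly your claimed bound with the claimed radius $\alpha \leq c/(L^2\|B\|_{\op}^2)$; equivalently, one can linearize $\|BX_I\|_2^2$ by an auxiliary Gaussian vector and condition the other way, which is precisely Steps 2--4 of the paper's proof of Proposition \ref{prop:Hanson-Wright_ineq_Hilbert_space_diagonal_free}. With that substitution your argument goes through and yields \eqref{eqn:Hanson-Wright_ineq_classical}.
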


There are some variants of the finite-dimensional Hanson-Wright inequality. Sharp upper and lower tail inequalities for quadratic forms of independent Gaussian random variables are derived in \cite{LaurentMassart2000_AoS}. \cite{RauhutFoucart2013} and \cite{BartheMilman2013_CMP} derive the Hanson-Wright inequality for zero-diagonal matrix $A$ with independent Bernoulli and centered sub-gaussian random variables, respectively. \cite{HsuKakadeZhang2012_ECP} establishes an upper tail inequality for positive semidefinite quadratic forms in a sub-gaussian random vector with dependent components. \cite{VuWang2015_RSA} proves a dimension-dependent concentration inequality for a centered random vector under the convex concentration property. \cite{Adamczak2015_ECP} further improves the inequality of \cite{VuWang2015_RSA} by removing the dimension dependence in $\bR^{n}$. 

In this paper, we first derive an infinite-dimensional analog of the Hanson-Wright inequality (\ref{thm:Hanson-Wright_ineq_classical}) for sub-gaussian random variables taking values in a Hilbert space, which can be seen as a unified generalization of the aforementioned papers in finite dimensions. Motivation of deriving the dimension-free Hanson-Wright inequality stems from the generalized $K$-means clustering for non-Euclidean data with non-linear features, which covers the functional data clustering and kernel clustering as special examples. It is well-known that the (classical) Euclidean distance based $K$-means clustering is computationally $\mathsf{NP}$-hard in the worst case. Various SDP relaxations in literature (cf. \cite{PengWei2007_SIAMJOPTIM,LiLiLingStohmerWei2017,FeiChen2018,Royer2017_NIPS,GiraudVerzelen2018}) aim to provide exact and partial recovery of the true clustering structure. However, it remains a challenging task to provide strong statistical guarantees for computationally tractable (i.e., polynomial-time) algorithms to cluster non-Euclidean data taking values in a general Hilbert space with non-linear features. As we shall see in Section \ref{sec:kernel_Kmeans}, the Hilbert space version of the Hanson-Wright inequality offers a powerful tool to establish the exponential rate of convergence for an SDP relaxation of the generalized $K$-means. This partial recovery bound implies the exact recovery of the generalized $K$-means clustering via a simple rounding algorithm. In contrast to the heuristic greedy algorithms often employed in the kernel clustering setting (cf.~\cite{SongSmolaGrettonBorgwardt2007_ICML}), our result provides a principled SDP relaxed kernel clustering algorithm with exact recovery guarantees.


\section{Hanson-Wright inequality in Hilbert spaces}
\label{sec:hanson-wright_hilbert_spaces}

To state the Hanson-Wright inequality in a general Hilbert space, we first need to properly specify the sub-gaussian random variables therein. 

\subsection{Sub-gaussian random variables in Hilbert spaces}

Let $\bH$ be a real separable Hilbert space and $\cB(\bH)$ be the class of bounded linear operators $\Sigma: \bH \to \bH$. If the operator $\Sigma \in \cB(\bH)$ is positive definite (i.e., it is self-adjoint $\Sigma^{*} = \Sigma$ and  $\langle \Sigma z, z \rangle \geq 0$ for all $z \in \bH$), then there is a unique positive definite (and thus self-adjoint) square root operator $\Sigma^{1/2} \in \cB(\bH)$ satisfying $\Sigma^{1/2} \Sigma^{1/2} = \Sigma$ (cf. Theorem 3.4.3 in \cite{HsingEubank2015_Wiley}). 

\begin{defn}[Trace class of linear operators on a separable Hilbert space]
Let $\Sigma \in \cB(\bH)$. Then $\Sigma$ is {\it trace class} if 
\[
\|\Sigma\|_{\tr} := \sum_{j=1}^{\infty} \langle (\Sigma^{*}\Sigma)^{1/2} e_{j}, e_{j} \rangle < \infty,
\]
where $(e_{j})_{j=1}^{\infty}$ is a {\it complete  orthonormal system} (CONS) of $\bH$. In this case, $\|\Sigma\|_{\tr}$ is the {\it trace norm} of $\Sigma$. 
\end{defn}

Note that the trace norm does not depend on the choice of the CONS. A self-adjoint and positive definite trace class linear operator $\Sigma$ is compact and it plays a similar role as a covariance matrix, where the trace norm is simply the trace of the covariance matrix. In particular, if $\Sigma$ is positive definite trace class, then $\|\Sigma\|_{\tr} = \sum_{j=1}^{\infty} \langle \Sigma e_{j}, e_{j} \rangle = \sum_{j=1}^{\infty} \| \Sigma^{1/2} e_{j} \|^{2}$. Let $(\Omega, \sB, \Prob)$ be a probability space. 

\begin{defn}[Hilbert space valued sub-gaussian random variable]
\label{defin:subgaussian_Hilbert_space}
Let $Z$ be a random variable in $\bH$ and $\Gamma : \bH \to \bH$ be a positive definite trace class linear operator. Then $Z$ is {\it sub-gaussian with respect to $\Gamma$} (denote as $Z \sim \subg(\Gamma)$) if there exists an $\alpha \geq 0$ such that for all $z \in \bH$, 
\begin{equation}
\label{eqn:subgaussian_Hilbert_space}
\E\left[ e^{\langle z, Z - \E[Z] \rangle} \right] \leq e^{\alpha^{2} \langle \Gamma z, z \rangle / 2},
\end{equation} 
where the expectation $\E[Z] = \int_{\Omega} Z d\dP$ is defined as a Bochner integral (cf. Chapter 2.6 in \cite{HsingEubank2015_Wiley}). Moreover, if $Z \sim \subg(\Gamma)$ with mean $\mu=\E[Z]$, then the {\it $\psi_{2}$ (or sub-gaussian) norm} of $Z$ with respect to $\Gamma$ is defined as 
\[
\|Z\|_{\psi_{2,\Gamma}} = \inf \left\{ \alpha \geq 0 : \E\left[ e^{\langle z, Z-\mu \rangle} \right] \leq e^{\alpha^{2} \langle \Gamma z, z \rangle / 2} \quad \forall z \in \bH \right\}.
\]
\end{defn}
Note that Definition \ref{defin:subgaussian_Hilbert_space} corresponds to the $R$-sub-gaussianity in \cite{Antonini1997_RSMUP}, and it is an infinite-dimensional analog of the sub-gaussian random vectors in $\bR^{p}$ (see for example \cite{Vershynin2018_Cambridge} and \cite{HsuKakadeZhang2012_ECP}). Unsurprisingly, the Gaussian random variables in $\bH$ is a special case of sub-gaussian random variables in $\bH$. 

\begin{defn}[Hilbert space valued Gaussian random variable]
\label{defin:gaussian_Hilbert_space}
A random variable $Z$ in $\bH$ is {\it Gaussian with respect to $\Gamma$} and with mean $\mu=\E[Z]$ (denote as $Z \sim N(\mu, \Gamma)$) if for all $z \in \bH$, 
\begin{equation}
\label{eqn:gaussian_Hilbert_space}
\E\left[ e^{\langle z, Z-\mu \rangle} \right] = e^{\langle \Gamma z, z \rangle / 2}.
\end{equation}
\end{defn}

\begin{lem}
\label{lem:hilbert_space_gaussian_rv}
If $Z \sim N(\mu, \Gamma)$, then $\|Z\|_{\psi_{2}, \Gamma} = 1$ and $\Sigma = \Gamma$, where $\Sigma := \E[(Z-\mu) \otimes (Z-\mu)]$ is the covariance operator of $Z$. More generally, if $Z \sim \subg(\Gamma)$ with mean $\mu=\E[Z]$, then $\Sigma \preceq 4 \|Z\|_{\psi_{2}, \Gamma}^{2} \Gamma$, i.e., $(4 \|Z\|_{\psi_{2}, \Gamma}^{2} \Gamma - \Sigma)$ is positive semidefinite. 
\end{lem}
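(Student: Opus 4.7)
The plan is to split the argument into the Gaussian case (Part 1) and the general sub-gaussian case (Part 2). Both parts reduce the Hilbert-space statement to a one-dimensional scalar statement about $Y_z := \langle z, Z-\mu\rangle$ via the substitution $z \mapsto tz$ in the defining MGF inequality, which is available because Definitions \ref{defin:subgaussian_Hilbert_space} and \ref{defin:gaussian_Hilbert_space} are required to hold for every $z \in \bH$.

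For Part 1, identity (\ref{eqn:gaussian_Hilbert_space}) certifies that $\alpha = 1$ is admissible in Definition \ref{defin:subgaussian_Hilbert_space}, giving $\|Z\|_{\psi_{2}, \Gamma} \leq 1$. Conversely, since $\Gamma$ is a nonzero positive definite trace class operator, some $z \in \bH$ satisfies $\langle \Gamma z, z\rangle > 0$, and for this $z$ no $\alpha < 1$ can satisfy (\ref{eqn:subgaussian_Hilbert_space}); hence $\|Z\|_{\psi_{2}, \Gamma} = 1$. Replacing $z$ by $tz$ in (\ref{eqn:gaussian_Hilbert_space}) next gives $\E[e^{tY_z}] = e^{t^2 \langle \Gamma z, z\rangle / 2}$ for every $t \in \R$, identifying $Y_z$ as a scalar $N(0, \langle \Gamma z, z\rangle)$ random variable. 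By the definition of the covariance operator, $\langle \Sigma z, z\rangle = \E[Y_z^2] = \langle \Gamma z, z\rangle$ for every $z \in \bH$, and polarization together with self-adjointness of $\Sigma$ and $\Gamma$ on the real Hilbert space $\bH$ promotes this quadratic-form identity to the operator identity $\Sigma = \Gamma$.

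For Part 2, set $\alpha := \|Z\|_{\psi_{2}, \Gamma}$. The same substitution in (\ref{eqn:subgaussian_Hilbert_space}) yields $\E[e^{tY_z}] \leq e^{t^2 \alpha^2 \langle \Gamma z, z\rangle / 2}$ for every $t \in \R$, so $Y_z$ is a mean-zero scalar sub-gaussian random variable with proxy variance $\sigma^2 = \alpha^2 \langle \Gamma z, z\rangle$. A standard Chernoff step gives the tail bound $\Prob(|Y_z| > s) \leq 2 e^{-s^2 / (2\sigma^2)}$, and integrating $\E[Y_z^2] = \int_0^\infty 2s\, \Prob(|Y_z| > s)\,ds$ produces $\E[Y_z^2] \leq 4\sigma^2$. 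Since $\langle \Sigma z, z\rangle = \E[Y_z^2]$, this rearranges to $\langle \Sigma z, z\rangle \leq 4 \|Z\|_{\psi_{2}, \Gamma}^2 \langle \Gamma z, z\rangle$ for every $z \in \bH$, which is exactly the asserted inequality $\Sigma \preceq 4 \|Z\|_{\psi_{2}, \Gamma}^2 \Gamma$.

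There is no real obstacle; the argument is a soft scalar reduction leveraging the fact that the Hilbert-space MGF bound is imposed at every test point. The constant $4$ is the usual cost of passing from the MGF bound to an $L^2$ bound through the tail inequality (a sharper Taylor expansion of the MGF around $t = 0$ would even give factor $1$). Summing the scalar inequality against a CONS incidentally shows $\|\Sigma\|_{\tr} \leq 4\alpha^2 \|\Gamma\|_{\tr} < \infty$, so $\Sigma$ is automatically trace class and $(4\|Z\|_{\psi_{2}, \Gamma}^2 \Gamma - \Sigma)$ is a genuine positive semidefinite trace-class operator.
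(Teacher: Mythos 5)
Your proposal is correct and follows essentially the same route as the paper's proof: reduce to the scalar random variable $\langle z, Z-\mu\rangle$, read off $\langle \Sigma z,z\rangle=\langle\Gamma z,z\rangle$ from the Gaussian moment generating function, and in the sub-gaussian case apply Chernoff plus tail integration to get $\langle\Sigma z,z\rangle\leq 4\|Z\|_{\psi_2,\Gamma}^2\langle\Gamma z,z\rangle$, then pass from quadratic forms to operators. The only differences are cosmetic (you verify $\|Z\|_{\psi_2,\Gamma}\geq 1$ explicitly, which the paper dismisses as obvious, and you note $\Sigma$ is trace class), so no further comment is needed.
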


For $a, b \in \bH$, the tensor product $a \otimes b : \bH \to \bH$ is a linear operator defined as $(a \otimes b) z = \langle b, z \rangle a$ for all $z \in \bH$. Lemma~\ref{lem:hilbert_space_gaussian_rv} is proved in Appendix~\ref{sec:auxiliary_lemmas}. \\

\noindent {\bf Notation.} We shall use $c, c_{0}, c_{1}, C, C_{0}, C_{1}, \dots$ to denote positive and finite universal constants, whose values may vary from place to place. For $a, b \in \bR$, denote $a \vee b = \max(a, b)$ and $a \wedge b = \min(a, b)$. For $\Sigma \in \cB(\bH)$, the operator norm $\|\Sigma\|_{\op}$ of $\Sigma$ is defined as the square root of the largest eigenvalue of $\Sigma^{*} \Sigma$. If $\sum_{j=1}^{\infty} \|\Sigma e_{j}\|^{2} < \infty$, then $\Sigma$ is a Hilbert-Schmidt (HS) operator and $\|\Sigma\|_{\HS} = (\sum_{j=1}^{\infty} \|\Sigma e_{j}\|^{2})^{1/2}$. For a matrix $Z \in \bR^{m \times n}$, $|Z|_{1} = \sum_{i=1}^{m} \sum_{j=1}^{n} |Z_{ij}|$. 

\subsection{Hanson-Wright inequality in Hilbert spaces}
\label{subsec:hanson-wright_hilbert_space}

Throughout Section \ref{subsec:hanson-wright_hilbert_space}, we assume that $\bH$ is a real separable Hilbert space and $\Gamma \in \cB(\bH)$ is a positive definite trace class operator on $\bH$. First, we present a Hanson-Wright inequality with zero diagonal in Proposition \ref{prop:Hanson-Wright_ineq_Hilbert_space_diagonal_free}. 

\begin{prop}[Hanson-Wright inequality for quadratic forms of sub-gaussian random variables in Hilbert spaces: zero diagonal]
\label{prop:Hanson-Wright_ineq_Hilbert_space_diagonal_free}
Let $X_{i}, i=1,\dots,n$, be a sequence of independent centered $\subg(\Gamma)$ random variables in $\bH$ and $L_{i} = \|X_{i}\|_{\psi_{2},\Gamma}$. Let $A = (a_{ij})_{i,j=1}^{n}$ be an $n \times n$ matrix and $S = \sum_{1 \leq i \neq j \leq n} a_{ij} \langle X_{i}, X_{j} \rangle$. Then there exists a universal constant $C > 0$ such that for any $t > 0$, 
\begin{equation}
\label{eqn:Hanson-Wright_ineq_Hilbert_space_diagonal_free}
\Prob \left( S \geq t \right ) \leq \exp\left[ -C \min\left( {t^{2} \over L^{4} \|\Gamma\|_{\HS}^{2} \|A\|_{\HS}^{2}}, {t \over L^{2} \|\Gamma\|_{\op} \|A\|_{\op}} \right)\right],
\end{equation}
where $L = \max_{1 \leq i \leq n} L_{i}$. 
\end{prop}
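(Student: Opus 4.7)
The plan is to adapt the Rudelson--Vershynin proof of the classical finite-dimensional Hanson--Wright inequality to the Hilbert-valued setting, organized in three main steps: decoupling, conditional sub-Gaussian concentration, and control of the Laplace transform of the resulting positive ``variance proxy.''

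Since $S$ is an off-diagonal bilinear form in the independent Hilbert-valued random variables $X_i$, I would first invoke the de la Pe\~na--Montgomery-Smith decoupling inequality, whose proof relies only on independence and convexity and therefore extends verbatim to the Hilbert-valued case. This yields
\[
\E \exp(\lambda S) \leq \E \exp(c_{0} \lambda \tilde S), \qquad \tilde S := \sum_{i, j=1}^n a_{ij} \langle X_i, X_j' \rangle,
\]
for a universal constant $c_0$, where $(X_j')_{j=1}^n$ is an independent copy of $(X_j)_{j=1}^n$. Writing $\tilde S = \sum_i \langle X_i, v_i \rangle$ with $v_i := \sum_j a_{ij} X_j'$, Definition~\ref{defin:subgaussian_Hilbert_space} shows that, conditional on $(X_j')$, each $\langle X_i, v_i \rangle$ is a centered scalar sub-gaussian random variable with variance proxy $L^2 \langle \Gamma v_i, v_i\rangle$. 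By conditional independence across $i$,
\[
\E[\exp(c_0 \lambda \tilde S) \mid (X_j')] \leq \exp(c_1 L^2 \lambda^2 R), \qquad R := \sum_{i=1}^n \langle \Gamma v_i, v_i \rangle = \sum_{j, k=1}^n (A^{T} A)_{jk} \langle \Gamma X_j', X_k' \rangle.
\]
The remaining, main task is to bound the Laplace transform of the non-negative random variable $R$. I would expand $R$ in the eigen-decomposition $\Gamma = \sum_\ell \lambda_\ell e_\ell \otimes e_\ell$ and set $\xi_j^{'(\ell)} := \lambda_\ell^{-1/2} \langle X_j', e_\ell \rangle$ on $\{\lambda_\ell > 0\}$: these are centered scalar random variables with $\|\cdot\|_{\psi_2} \lesssim L$, independent across $j$ for each fixed $\ell$. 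A direct computation gives $R = \sum_\ell \lambda_\ell^2 \|A \xi^{'(\ell)}\|_2^2$. For each fixed $\ell$, the classical Hanson--Wright inequality (Theorem~\ref{thm:Hanson-Wright_ineq_classical}) delivers Bernstein-type MGF bounds on $\|A \xi^{'(\ell)}\|_2^2$; aggregating across $\ell$ with the weights $\lambda_\ell^2$, and using $\sum_\ell \lambda_\ell^2 = \|\Gamma\|_{\HS}^2$ and $\sup_\ell \lambda_\ell = \|\Gamma\|_{\op}$, should produce $\E \exp(\gamma R) \leq \exp(C L^2 \gamma\, \|\Gamma\|_{\HS}^2 \|A\|_{\HS}^2)$ valid for $0 \leq \gamma \leq c/(L^2 \|\Gamma\|_{\op} \|A\|_{\op})$. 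Substituting back and applying a one-sided Chernoff optimization in $\lambda$ then delivers~\eqref{eqn:Hanson-Wright_ineq_Hilbert_space_diagonal_free}.

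I expect the principal obstacle to be the aggregation across $\ell$ in the final step. Although the coordinates $\xi_j^{'(\ell)}$ are independent across $j$, they may be dependent across $\ell$ for the same $j$ (they are jointly independent only in the Gaussian special case), so the summands $\|A \xi^{'(\ell)}\|_2^2$ are in general not mutually independent. A naive Minkowski-in-$\ell$ argument would produce the trace-class factor $\|\Gamma\|_{\tr}$ in place of the desired $\|\Gamma\|_{\HS}$, which is much weaker when $\Gamma$ has many small eigenvalues. Recovering the correct Hilbert--Schmidt scaling requires exploiting the weight $\lambda_\ell^2$ (rather than $\lambda_\ell$) that arises from the $\Gamma$-weighted inner product defining $R$, together with a careful separation of the sub-Gaussian and sub-exponential regimes so that $\|\Gamma\|_{\op}$ (rather than $\|\Gamma\|_{\HS}$) appears in the linear-in-$t$ tail contribution.
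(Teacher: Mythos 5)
Your overall architecture (decoupling, conditional sub-gaussian bound with a random variance proxy $R$, then a Laplace-transform bound for $R$) is the same as the paper's, and Steps 1--2 of your plan are sound: conditional on $(X_j')$, the decoupled form is indeed sub-gaussian with proxy $L^2 R$, $R=\sum_{j,k}(A^TA)_{jk}\langle \Gamma X_j',X_k'\rangle=\sum_{\ell}\lambda_\ell^2\|A\xi'^{(\ell)}\|_2^2$. But the proof stalls at precisely the point you identify and do not resolve: bounding $\E\exp(\gamma R)$ with the $\|\Gamma\|_{\HS}$/$\|\Gamma\|_{\op}$ trade-off. Applying the classical Hanson--Wright inequality separately for each fixed $\ell$ and then aggregating cannot work as stated, because the coordinates $\xi_j'^{(\ell)}$ are in general dependent across $\ell$ for the same $j$ (Definition~\ref{defin:subgaussian_Hilbert_space} only constrains the MGF of each linear functional, not the joint law of the coordinates), so the summands $\|A\xi'^{(\ell)}\|_2^2$ are not independent and their cumulant generating functions do not add. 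Any generic aggregation (H\"older/Minkowski in $\ell$) degrades the admissible range of $\gamma$: one ends up needing $\gamma\lesssim (L^2\|\Gamma\|_{\op}\|\Gamma\|_{\tr}\|A\|_{\op}^2)^{-1}$ rather than $\gamma\lesssim (L^2\|\Gamma\|_{\op}^2\|A\|_{\op}^2)^{-1}$, which after the Chernoff step replaces the linear-in-$t$ term $t/(L^2\|\Gamma\|_{\op}\|A\|_{\op})$ by a strictly weaker one involving $\|\Gamma\|_{\tr}$. Saying that the fix ``requires exploiting the weight $\lambda_\ell^2$ and a careful separation of regimes'' names the target but supplies no mechanism; as written, the main estimate is unproven. (A minor additional slip: the range you state, $\gamma\leq c/(L^2\|\Gamma\|_{\op}\|A\|_{\op})$, is the range needed for $\lambda$, not for $\gamma=c_1L^2\lambda^2$.)

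The missing idea, which is how the paper closes this gap, is a Gaussian comparison step inserted \emph{before} the spectral expansion: after the first conditioning, the bound $\exp(c\lambda^2L^2\sigma^2)$ is recognized as the exact conditional MGF of a decoupled bilinear form in which one side has been replaced by independent Gaussians $g_j\sim N(0,cL_j^2\Gamma)$; a second conditioning (now over the remaining $X_i$'s) then transfers all randomness to the Gaussian side, so the variance proxy becomes a genuine Gaussian quadratic form $\tau_\delta^2$. In the eigenbasis of $\Gamma$ the Gaussian coordinates are jointly independent across both $j$ and $\ell$, so $\tau_\delta^2$ is distributed as $\sum_k s_k^2 Z_k^2$ with i.i.d.\ $\chi^2_1$ variables, and elementary bounds $\max_k s_k^2\lesssim L^4\|A\|_{\op}^2\|\Gamma\|_{\op}^2$ and $\sum_k s_k^2\lesssim L^4\|A\|_{\HS}^2\|\Gamma\|_{\HS}^2$ give exactly the two regimes in \eqref{eqn:Hanson-Wright_ineq_Hilbert_space_diagonal_free}. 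If you want to keep your de la Pe\~na--Montgomery-Smith decoupling and the representation of $R$, you should add this Gaussian reduction (or an equivalent comparison argument) between your Step 2 and Step 3; without it, the cross-$\ell$ dependence blocks the final estimate.
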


\begin{rem}
Proposition \ref{prop:Hanson-Wright_ineq_Hilbert_space_diagonal_free} is a dimension-free version of the Hanson-Wright inequality with a zero diagonal weighting matrix for {\it independent} sub-gaussian random variables in $\bR$ \cite{RudelsonVershynin2013_ECP}. Specifically, Theorem \ref{thm:Hanson-Wright_ineq_classical} (i.e., Theorem 1.1 in \cite{RudelsonVershynin2013_ECP}) is a special case of Proposition \ref{prop:Hanson-Wright_ineq_Hilbert_space_diagonal_free} with $\bH = \bR$ and $\langle X_{i}, X_{j} \rangle = X_{i} X_{j}$. In this case, we may take $\Gamma = 1$ and thus $\|\Gamma\|_{\op} = \|\Gamma\|_{\HS} = 1$. Different from Theorem \ref{thm:Hanson-Wright_ineq_classical}, Proposition \ref{prop:Hanson-Wright_ineq_Hilbert_space_diagonal_free} is also able to capture the (component-wise) dependency encoded in $\Gamma$ for general Hilbert spaces, thus covering certain quadratic forms in a finite-dimensional sub-gaussian random vector with {\it dependent} components. We emphasize that, although our general proof strategy of decoupling the off-diagonal dependence is based on that of Theorem 1.1 in \cite{RudelsonVershynin2013_ECP}, a key step in our proof to remove the dependency in the Hilbert space valued sub-gaussian random variables is diagonalizing the operator $\Gamma$ (together with the decoupling). Such diagonalization procedure allows us to perform the calculations in an isometric $\ell^{2}$ space of $\bH$, where linear operators can be conveniently represented by (infinite-dimensional) matrices. This turns out to be the crux to obtain the trade-off between $\|\Gamma\|_{\HS}$ and $\|\Gamma\|_{\op}$ in the tail probability bound for the off-diagonal sum $S$.  \qed
\end{rem}

Our next result is an upper tail inequality (i.e., one-sided Hanson-Wright inequality) with non-negative diagonal weights in Theorem \ref{thm:Hanson-Wright_ineq_Hilbert_space_nonnegative_diagonal} below. 

\begin{thm}[Upper tail inequality for quadratic forms of sub-gaussian random variables in Hilbert spaces: non-negative diagonal]
\label{thm:Hanson-Wright_ineq_Hilbert_space_nonnegative_diagonal}
Let $X_{i}, i=1,\dots,n$, be a sequence of independent centered $\subg(\Gamma)$ random variables in $\bH$ and $L_{i} = \|X_{i}\|_{\psi_{2},\Gamma}$. Let $A = (a_{ij})_{i,j=1}^{n}$ be an $n \times n$ matrix such that $a_{ii} \geq 0$, and $Q = \sum_{i,j=1}^{n} a_{ij} \langle X_{i}, X_{j} \rangle$. Then there exists a universal constant $C > 0$ such that for any $t > 0$, 
\begin{equation}
\label{eqn:Hanson-Wright_ineq_Hilbert_space_nonnegative_diagonal}
\Prob \left( Q \geq \sum_{i=1}^{n} a_{ii} L_{i}^{2} \|\Gamma\|_{\tr} + t \right ) \leq 2 \exp\left[ -C \min\left( {t^{2} \over L^{4} \|\Gamma\|_{\HS}^{2} \|A\|_{\HS}^{2}}, {t \over L^{2} \|\Gamma\|_{\op} \|A\|_{\op}} \right)\right],
\end{equation}
where $L = \max_{1 \leq i \leq n} L_{i}$. 
\end{thm}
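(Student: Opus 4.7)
The plan is to split $Q = D + S$ with diagonal part $D = \sum_{i=1}^n a_{ii} \|X_i\|^2$ and off-diagonal part $S = \sum_{i \neq j} a_{ij} \langle X_i, X_j\rangle$, bound each separately, and union-bound over the two halves of $t$. For $S$, Proposition~\ref{prop:Hanson-Wright_ineq_Hilbert_space_diagonal_free} already delivers a tail of the exact form on the right-hand side of (\ref{eqn:Hanson-Wright_ineq_Hilbert_space_nonnegative_diagonal}) (up to a universal constant). The entire task is therefore to show the matching sub-exponential bound
\[
\Prob\Bigl(D \geq \sum_i a_{ii} L_i^2 \|\Gamma\|_{\tr} + s\Bigr) \leq \exp\Bigl[-c \min\Bigl({s^2 \over L^4 \|\Gamma\|_{\HS}^2 \|A\|_{\HS}^2},\ {s \over L^2 \|\Gamma\|_{\op} \|A\|_{\op}}\Bigr)\Bigr].
\]

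The key ingredient, which I expect to be the main obstacle, is a one-variable $\chi^{2}$-type MGF estimate: for any centered $\subg(\Gamma)$ random variable $X \in \bH$ with $\|X\|_{\psi_{2},\Gamma} = L$,
\[
\log \E\bigl[e^{\mu \|X\|^{2}}\bigr] \leq \mu L^{2} \|\Gamma\|_{\tr} + C \mu^{2} L^{4} \|\Gamma\|_{\HS}^{2}, \qquad 0 \leq \mu \leq c/(L^{2} \|\Gamma\|_{\op}).
\]
The sub-gaussian definition (\ref{eqn:subgaussian_Hilbert_space}) only controls \emph{linear} functionals of $X$, so moving to the quadratic $\|X\|^{2}$ requires a linearization. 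I would do this by Gaussian decoupling in an eigenbasis of $\Gamma$. Diagonalize $\Gamma$ as $\Gamma e_{j} = \gamma_{j} e_{j}$ ($\gamma_{j} \geq 0$), pick iid standard Gaussians $g_{1}, g_{2}, \ldots$ independent of $X$, and set $Y_{N} = \sum_{j=1}^{N} g_{j} e_{j} \in \bH$. Evaluating $\E[\exp(\sqrt{2\mu}\,\langle Y_{N}, X\rangle)]$ by conditioning first on $X$ (and using the Gaussian MGF of $g_{j}$) yields $\E[\exp(\mu \sum_{j \leq N} \langle e_{j}, X\rangle^{2})]$; conditioning first on $g$ and applying (\ref{eqn:subgaussian_Hilbert_space}) with the (now deterministic) vector $z = \sqrt{2\mu}\,Y_{N}$ gives the upper bound $\E[\exp(\mu L^{2} \sum_{j \leq N} \gamma_{j} g_{j}^{2})] = \prod_{j \leq N}(1 - 2\mu L^{2} \gamma_{j})^{-1/2}$. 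Monotone convergence as $N \to \infty$ (valid because the eigenvectors of $\Gamma$ form a CONS on the closed range of $\Gamma$, to which $X$ is almost surely confined by the sub-gaussian assumption) combined with the elementary inequality $-\tfrac12 \log(1-x) \leq x/2 + x^{2}$ for $x \in [0,1/2]$ then delivers the displayed MGF estimate.

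From here the argument is routine Bernstein tensorization. Independence of the $X_{i}$ and $a_{ii} \geq 0$ let the per-coordinate MGFs multiply, producing a sub-exponential MGF for $D - \sum_{i} a_{ii} L_{i}^{2} \|\Gamma\|_{\tr}$ with variance parameter $C L^{4} \|\Gamma\|_{\HS}^{2} \sum_{i} a_{ii}^{2}$ and scale $C L^{2} \|\Gamma\|_{\op} \max_{i} a_{ii}$, valid for $0 \leq \mu \leq c/(L^{2} \|\Gamma\|_{\op} \max_{i} a_{ii})$. A Chernoff optimization over the two regimes produces the min-of-quadratic-and-linear tail displayed above, and the estimates $\sum_{i} a_{ii}^{2} \leq \|A\|_{\HS}^{2}$ together with $|a_{ii}| = |\langle e_{i}, A e_{i}\rangle| \leq \|A\|_{\op}$ upgrade the bound to the full matrix norms appearing in (\ref{eqn:Hanson-Wright_ineq_Hilbert_space_nonnegative_diagonal}). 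Setting $s = t/2$ and adding the Proposition~\ref{prop:Hanson-Wright_ineq_Hilbert_space_diagonal_free} bound for $\Prob(S \geq t/2)$ yields the factor $2$ and completes the proof.
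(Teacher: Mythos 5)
Your proposal is correct, and its skeleton is the same as the paper's: split $Q$ into the diagonal sum and the off-diagonal sum $S$, invoke Proposition~\ref{prop:Hanson-Wright_ineq_Hilbert_space_diagonal_free} for $S$, and handle the diagonal by a per-variable MGF bound for $\|X_i\|^{2}$ centered at $L_i^{2}\|\Gamma\|_{\tr}$, followed by Bernstein tensorization (using $a_{ii}\geq 0$), Chernoff optimization, and $\sum_i a_{ii}^2\leq\|A\|_{\HS}^2$, $\max_i a_{ii}\leq\|A\|_{\op}$. The only real divergence is how the key MGF estimate is derived. The paper proves it through Lemmas~\ref{lem:mgf_bound_subgaussian}--\ref{lem:mgf_bound_subgaussian_hilbert_space}: an explicit Gaussian-integral (Fubini) comparison in $\bR^{K}$ showing the sub-gaussian $\|X_K\|_2^2$-MGF is dominated by the Gaussian one with the same $\Gamma_K$, then a truncation over an arbitrary CONS of $\bH$ and monotone convergence. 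You instead decouple directly in the eigenbasis of $\Gamma$ with an auxiliary i.i.d.\ Gaussian series, which factorizes everything into one-dimensional $\chi^2$ MGFs $(1-2\mu L^2\gamma_j)^{-1/2}$; this is the same linearization idea but more self-contained (no separate finite-dimensional Gaussian-comparison lemma, no determinants). One small point to tighten: your parenthetical claim that $X\in\overline{\Image(\Gamma)}$ almost surely does follow from the sub-gaussian assumption (for $z\in\ker\Gamma$ and any $\lambda\in\bR$, $\E e^{\lambda\langle z,X\rangle}\leq 1$ while Jensen gives $\geq 1$, forcing $\langle z,X\rangle=0$ a.s., and separability lets you intersect over a countable basis of $\ker\Gamma$), but this deserves a line of proof; alternatively you can sidestep it entirely by extending the eigenbasis to a full CONS of $\bH$ (kernel directions enter with $\gamma_j=0$ and contribute trivial factors), so that Parseval applies unconditionally.
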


Both Proposition~\ref{prop:Hanson-Wright_ineq_Hilbert_space_diagonal_free} and Theorem~\ref{thm:Hanson-Wright_ineq_Hilbert_space_nonnegative_diagonal} allow $X_{i}, i = 1,\dots,n$, to have different covariance operators $\Sigma_{i}$, provided that $\Sigma_{i} \preceq 4 \|X_{i}\|_{\psi_{2}, \Gamma}^{2} \Gamma$ (cf. Lemma \ref{lem:hilbert_space_gaussian_rv}). 

\begin{rem}[Connections to the existing upper tail inequality in finite-dimensional Euclidean spaces]
First, we mention that the upper tail probability bound~\eqref{eqn:Hanson-Wright_ineq_Hilbert_space_nonnegative_diagonal} (also cf. Lemma \ref{lem:mgf_bound_subgaussian_centered}) is sharper than the one-dimensional Bernstein's inequality for the non-negatively weighted diagonal sum of squared norm of independent sub-gaussian random variables in $\bH$. Indeed, if we simply apply Bernstein's inequality (cf. Theorem 2.8.1 in \cite{Vershynin2018_Cambridge}) for the real-valued sub-exponential random variables $\|X_{i}\|^{2}$ (cf. Lemma \ref{lem:squared_norm_is_subexp}), then the diagonal sum in $Q$ has the following probability bound: for any $t > 0$, 
\begin{equation}
\label{lem:Hanson-Wright_ineq_Hilbert_space_linear_part_less_sharp}
\begin{split}
& \Prob \left( \left| \sum_{i=1}^{n} a_{ii} (\|X_{i}\|^{2} - \E \|X_{i}\|^{2}) \right| \geq t \right) \\
& \qquad \leq 2 \exp \left[ -C \min \left( {t^{2} \over L^{4} \|\Gamma\|_{\tr}^{2} \sum_{i=1}^{n} a_{ii}^{2} }, {t \over L^{2} \|\Gamma\|_{\tr} \max_{1 \leq i \leq n} |a_{ii}|} \right) \right].
\end{split}
\end{equation}
Note that the right-hand side of (\ref{lem:Hanson-Wright_ineq_Hilbert_space_linear_part_less_sharp}) is controlled by one parameter $\|\Gamma\|_{\tr}$, which is strictly less sharp than (\ref{eqn:Hanson-Wright_ineq_Hilbert_space_nonnegative_diagonal}) since $\|\Gamma\|_{\op} \leq \|\Gamma\|_{\tr}$ and $\|\Gamma\|_{\HS}^{2} \leq \|\Gamma\|_{\op} \|\Gamma\|_{\tr} \leq \|\Gamma\|_{\tr}^{2}$. For instance, if $X_{i} \in \R^{p}$, then $\Gamma$ is often the $p \times p$ covariance matrix of $X_{i}$. In the special case for $\Gamma = I_{p}$, then $\|\Gamma\|_{\op} = 1$, $\|\Gamma\|_{\HS} = p^{1/2}$, and $\|\Gamma\|_{\tr} = p$. Therefore, direct application of the diagonal sum bound (\ref{lem:Hanson-Wright_ineq_Hilbert_space_linear_part_less_sharp}) does not yield the probability bound in Proposition \ref{prop:Hanson-Wright_ineq_Hilbert_space_diagonal_free}. In particular, for the generalized $K$-means clustering problem, this implies that a much more restrictive lower bound condition on the signal-to-noise ratio is required for exact recovery of the true clustering structure for high-dimensional data (more details can be found in the discussion after Theorem~\ref{thm:rate_SDP_kernel_kmeans_general_case}). 

Second, for non-negative diagonal weights, Theorem \ref{thm:Hanson-Wright_ineq_Hilbert_space_nonnegative_diagonal} is an infinite-dimensional (and thus dimension-free) generalization of the tail inequality for quadratic forms a sub-gaussian random vector with dependent components in $\bR^{p}$ \cite{HsuKakadeZhang2012_ECP}. In particular, if $X = (X_{1},\dots,X_{p})$ is a centered sub-gaussian random vector in $\bR^{p}$ (i.e., there exists a $\sigma \geq 0$ such that $\E[e^{z^{T}X}] \leq e^{\|z\|_{2}^{2} \sigma^{2} / 2}$ for all $z \in \bR^{p}$), then Theorem 2.1 in \cite{HsuKakadeZhang2012_ECP} states that: for any positive semidefinite matrix $\Sigma$ and $t > 0$, 
\[
\Prob \left( X^{T} \Gamma X \geq \sigma^{2} (\|\Gamma\|_{\tr} + 2 \|\Gamma\|_{\HS} \sqrt{t} + 2 \|\Gamma\|_{\op} t ) \right) \le e^{-t}. 
\]
The last inequality is a special case (up to a universal constant) of (\ref{eqn:Hanson-Wright_ineq_Hilbert_space_nonnegative_diagonal}) with $n = 1$, $A = 1$, $\bH = \bR^{p}$, $\Gamma^{-1/2} X \sim \subg(\sigma^{2} I_{p})$, and $L^{2} = \sigma^{2}$. In addition, we note that the positive semidefinite condition is not needed in our Theorem \ref{thm:Hanson-Wright_ineq_Hilbert_space_nonnegative_diagonal}. Instead, only a weaker condition on the non-negativity of the diagonal entries in the weighting matrix is required. \qed
\end{rem}

There are two limitations of Theorem \ref{thm:Hanson-Wright_ineq_Hilbert_space_nonnegative_diagonal}. First, $Q$ is typically not centered at $\sum_{i=1}^{n} a_{ii} L_{i}^{2} \|\Gamma\|_{\tr}$. For the generalized $K$-means application in Section \ref{sec:kernel_Kmeans}, this means that consistency of solutions of the SDP relaxation (\ref{eqn:clustering_Kmeans_sdp}) cannot be attained unless $\sum_{i=1}^{n} a_{ii} L_{i}^{2} \|\Gamma\|_{\tr}$ tends to $\E[Q]$. Second, the non-negativity condition on the diagonal weights $a_{ii} \geq 0$ in Theorem \ref{thm:Hanson-Wright_ineq_Hilbert_space_nonnegative_diagonal} is not entirely innocuous for obtaining a concentration inequality for $Q$ (i.e., two-sided Hanson-Wright inequality). Without imposing additional assumptions, we cannot expect a lower tail bound for sub-gaussian random variables even in $\bR^{n}$ \cite{Adamczak2015_ECP}. To simultaneously fix these two issues and obtain a concentration inequality for $Q-\E[Q]$, we make the following Bernstein-type assumption on the squared norm, in addition to the assumption that $X_{1},\dots,X_{n}$ are independent $\subg(\Gamma)$ with mean zero. 
\begin{ass}[Bernstein condition on the squared norm]
\label{ass:bernstein_squared_norm}
There exists a universal constant $C > 0$ such that 
\begin{equation}
\label{eqn:bernstein_squared_norm}
\E \left| \|X_{i}\|^{2} - \E\|X_{i}\|^{2} \right|^{k} \leq C k! L_{i}^{k-2} \|\Gamma\|_{\op}^{k-2} \|\Sigma_{i}\|_{\HS}^{2}  \quad \forall k = 3,4,\dots,
\end{equation}
where $\Sigma_{i} = \E[X_{i} \otimes X_{i}]$ is the covariance operator of $X_{i}, i = 1,\dots,n$. 
\end{ass}

\begin{rem}[Comments on Assumption \ref{ass:bernstein_squared_norm}]
Since $\|\Sigma_{i}\|_{\tr} = \E\|X_{i}\|^{2}$, Assumption \ref{ass:bernstein_squared_norm} is a mild condition on the sub-exponential tail behavior of $\|X_{i}\|^{2} - \|\Sigma_{i}\|_{\tr}$. For $\bH = \bR$, (\ref{eqn:bernstein_squared_norm}) is an automatic consequence of the sub-gaussianality (\ref{eqn:subgaussian_Hilbert_space}). For $\bH = \bR^{p}$, if $X = \Sigma^{1/2} Z$, where $Z = (Z_{1}, \dots, Z_{p})^{T}$ has independent components $Z_{j}$ with bounded sub-gaussian norms, then  
\[
\E[\|X\|^{2} - \E\|X\|^{2}]^{2} = \E[Z^{T} \Sigma Z - \tr(\Sigma)]^{2} \lesssim \|\Sigma\|_{\HS}^{2}.
\]
Such linear transformation of an independent random vector in $\bR^{p}$ with sub-gaussian components is a popular statistical model for the $K$-means clustering \cite{GiraudVerzelen2018,Royer2017_NIPS}. For the general Hilbert space $\bH$, it is easy to verify that Gaussian random variable $Z \sim N(0, \Gamma)$ in $\bH$ satisfies (\ref{eqn:bernstein_squared_norm}). 
Comparing with the ``centering" term $\sum_{i=1}^{n} a_{ii} L_{i}^{2} \|\Gamma\|_{\tr}$ in (\ref{eqn:Hanson-Wright_ineq_Hilbert_space_nonnegative_diagonal}), we shall see that the correct centering terms $\E\|X_{i}\|^{2}$ in (\ref{eqn:bernstein_squared_norm}) together with the parameters $(L_{i} \|\Gamma\|_{\op}, \|\Sigma_{i}\|_{\HS})$ are crucial to yield a concentration inequality for $Q-\E[Q]$. By Lemma \ref{lem:hilbert_space_gaussian_rv}, we know that $4 L_{i}^{2} \|\Gamma\|_{\tr} \geq \|\Sigma_{i}\|_{\tr}$ for any $X_{i} \sim \subg(\Gamma)$. In fact, even in $\bR$, it is easy to construct a random variable $X \sim \subg(\gamma^{2})$ such that $\gamma^{2} \gg \sigma^{2}$ where $\sigma^{2} = \Var(X)$ (cf. Example 4.1 and 4.2 in \cite{chen2018a}). In particular, here we give a counterexample in $\bR$ (so that $L_{i} = 1$). Let $Y_{n}$ follow a mixture of Gaussian distributions $F_{n} = (1-\epsilon_{n}) N(0,1) + \epsilon_{n} N(0, a_{n}^{2})$, where $a_{n} > 1$ and $\epsilon_{n} = a_{n}^{-4}$.  Then we have $\sigma_{n}^{2} := \Var(Y_{n}) = 1 - a_{n}^{-4} + a_{n}^{-2}$ and $Y_{n} \sim \subg(\gamma_{n}^{2})$, where $\gamma_{n}^{2} = C a_{n}^{2}$ for some sufficiently large constant $C > 0$. Thus if $a_{n} \to \infty$ as $n \to \infty$, then $\sigma_{n}^{2} \asymp 1$ and 
\[
\E|Y_{n}^{2} - \E Y_{n}^{2} |^{k} \lesssim a_{n}^{2k-4} \E|Z|^{2k} = a_{n}^{2k-4} (2k-1)!! \leq 4 k! (2a_{n}^{2})^{k-2} \lesssim k! (\gamma_{n}^{2})^{k-2} (\sigma_{n}^{2})^{2}, 
\]
where $Z \sim N(0,1)$. Hence $(Y_{n})_{n=1,2,\dots}$ is a sub-gaussian random variable satisfying Assumption \ref{ass:bernstein_squared_norm} and $\sigma_{n}^{2} \ll \gamma_{n}^{2}$, provided that $a_{n} \to \infty$ as $n \to \infty$. 
\qed
\end{rem}

Now we are ready to state the Hanson-Wright inequality for the general case. 

\begin{thm}[Hanson-Wright inequality for quadratic forms of sub-gaussian random variables in Hilbert spaces: general version]
\label{thm:HW_ineq_Hilbert_space}
Let $X_{i}, i=1,\dots,n$, be a sequence of independent centered $\subg(\Gamma)$ random variables in $\bH$ and $L_{i} = \|X_{i}\|_{\psi_{2},\Gamma}$. Let $A = (a_{ij})_{i,j=1}^{n}$ be an $n \times n$ matrix and $Q = \sum_{i,j=1}^{n} a_{ij} \langle X_{i}, X_{j} \rangle$. If in addition Assumption \ref{ass:bernstein_squared_norm} holds, then there exists a universal constant $C > 0$ such that for any $t > 0$, 
\begin{equation}
\label{eqn:HW_ineq_Hilbert_space}
\Prob \left( \left| Q - \E[Q] \right| \geq t \right) \leq 2 \exp\left[ -C \min\left( {t^{2} \over L^{4} \|\Gamma\|_{\HS}^{2} \|A\|_{\HS}^{2}}, {t \over L^{2} \|\Gamma\|_{\op} \|A\|_{\op}} \right)\right], 
\end{equation}
where $L = \max_{1 \leq i \leq n} L_{i}$. 
\end{thm}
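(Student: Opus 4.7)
My plan is to split $Q$ into its off-diagonal and diagonal parts and handle each separately, since the two parts require fundamentally different concentration tools: the off-diagonal piece is a bilinear form in independent sub-gaussians for which the already-proved Proposition~\ref{prop:Hanson-Wright_ineq_Hilbert_space_diagonal_free} is tailor-made, while the diagonal piece is a sum of centered sub-exponentials controlled by the Bernstein moment bound in Assumption~\ref{ass:bernstein_squared_norm}. Writing $S=\sum_{i\neq j}a_{ij}\langle X_i,X_j\rangle$ and $D=\sum_{i=1}^n a_{ii}\|X_i\|^2$, independence and the zero-mean assumption give $\E[S]=\sum_{i\neq j}a_{ij}\langle \E X_i,\E X_j\rangle=0$, hence $Q-\E[Q]=S+(D-\E[D])$ and a union bound at the threshold $t/2$ reduces the theorem to two separate tail estimates.

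For the off-diagonal part, I would invoke Proposition~\ref{prop:Hanson-Wright_ineq_Hilbert_space_diagonal_free} twice (once with weighting matrix $A$ and once with $-A$, noting $\|-A\|_{\HS}=\|A\|_{\HS}$ and $\|-A\|_{\op}=\|A\|_{\op}$) to obtain
\[
\Prob(|S|\geq t/2)\leq 2\exp\!\left[-C\min\!\left(\tfrac{t^2}{L^4\|\Gamma\|_{\HS}^2\|A\|_{\HS}^2},\ \tfrac{t}{L^2\|\Gamma\|_{\op}\|A\|_{\op}}\right)\right],
\]
which already matches the target form up to a universal constant.

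For the diagonal part, set $Y_i=a_{ii}(\|X_i\|^2-\E\|X_i\|^2)$, which are independent and centered. Assumption~\ref{ass:bernstein_squared_norm} gives
\[
\E|Y_i|^k\leq C\,k!\,\bigl(|a_{ii}|L_i\|\Gamma\|_{\op}\bigr)^{k-2}\,a_{ii}^2\,\|\Sigma_i\|_{\HS}^2 \qquad k\geq 3,
\]
which is precisely a Bernstein moment condition with scale parameter $b_i=|a_{ii}|L\|\Gamma\|_{\op}$ and variance proxy $v_i=a_{ii}^2\|\Sigma_i\|_{\HS}^2$. The classical Bernstein inequality for independent sums (e.g.\ Theorem~2.10 of Boucheron--Lugosi--Massart) then yields
\[
\Prob\bigl(|D-\E[D]|\geq t/2\bigr)\leq 2\exp\!\left[-C\min\!\left(\tfrac{t^2}{\sum_i v_i},\ \tfrac{t}{\max_i b_i}\right)\right].
\]
The bridge to the target form comes from three elementary reductions: (i) $\max_i|a_{ii}|\leq\|A\|_{\op}$ since $|a_{ii}|=|e_i^\top A e_i|\leq\|A\|_{\op}$, so $\max_i b_i\leq L^2\|\Gamma\|_{\op}\|A\|_{\op}$ (absorbing the extra factor of $L$); (ii) Lemma~\ref{lem:hilbert_space_gaussian_rv} gives $\Sigma_i\preceq 4L_i^2\Gamma$, and since both sides are positive trace-class operators the eigenvalue comparison yields $\|\Sigma_i\|_{\HS}\leq 4L_i^2\|\Gamma\|_{\HS}\leq 4L^2\|\Gamma\|_{\HS}$; (iii) $\sum_i a_{ii}^2\leq\|A\|_{\HS}^2$. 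Combining these, $\sum_i v_i\lesssim L^4\|\Gamma\|_{\HS}^2\|A\|_{\HS}^2$, and the diagonal tail bound takes the same form as the off-diagonal one.

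The step I expect to require the most care is verifying reduction (ii), i.e.\ that $\Sigma_i\preceq 4L_i^2\Gamma$ actually implies the Hilbert--Schmidt norm comparison $\|\Sigma_i\|_{\HS}\leq 4L_i^2\|\Gamma\|_{\HS}$. This is not automatic for general self-adjoint operators but does hold here because both are positive compact: by the min--max characterization, $\lambda_k(\Sigma_i)\leq 4L_i^2\lambda_k(\Gamma)$ for every $k$, whence summing squares gives the HS bound. Apart from this spectral comparison, the remaining work is bookkeeping: after adding the two tail probabilities and absorbing universal constants, replacing $t/2$ by $t$ delivers (\ref{eqn:HW_ineq_Hilbert_space}).
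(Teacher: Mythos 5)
Your proposal is correct and follows essentially the same route as the paper's proof: decompose $Q$ into the off-diagonal sum handled by Proposition~\ref{prop:Hanson-Wright_ineq_Hilbert_space_diagonal_free} and the diagonal sum handled by a Bernstein/Chernoff argument under Assumption~\ref{ass:bernstein_squared_norm}, then combine and symmetrize. The only cosmetic differences are that you union-bound the two pieces at $t/2$ with two-sided bounds rather than running the one-sided argument for $Q$ and $-Q$, and you make explicit the eigenvalue comparison $\|\Sigma_i\|_{\HS}\leq 4L_i^2\|\Gamma\|_{\HS}$ that the paper absorbs into its stated MGF bound.
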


\cite{VuWang2015_RSA} and \cite{Adamczak2015_ECP} derive Hanson-Wright inequalities under the convex concentration property of a finite-dimensional random vector, which is difficult to verify in general. In contrast, our Theorem \ref{thm:HW_ineq_Hilbert_space} holds under more transparent conditions (i.e., the sub-gaussian and Bernstein-type assumptions). Note that Theorem \ref{thm:HW_ineq_Hilbert_space} can be seen as a unified generalization of the finite-dimensional Hanson-Wright inequality to Hilbert spaces for both {\it independent} sub-gaussian random variables in $\bR$ \cite{RudelsonVershynin2013_ECP} and a sub-gaussian random vector with {\it dependent} components in $\bR^{p}$ \cite{HsuKakadeZhang2012_ECP}.

\section{$K$-means clustering in Hilbert spaces and its semidefinite relaxation}
\label{sec:kernel_Kmeans}

In this section, we apply the Hanson-Wright inequality in Section \ref{subsec:hanson-wright_hilbert_space} (i.e., Theorem \ref{thm:HW_ineq_Hilbert_space}) to the clustering problem of $n$ data points into $K$ clusters such that $K \leq n$. Let $X_{1},\dots,X_{n}$ be a sequence of independent random variables taking values in a measurable space $(\bX, \cX)$ on $(\Omega, \sB, \Prob)$. Suppose that there exists a clustering structure $G_{1}^{*},\dots,G_{K}^{*}$  (i.e., a partition on $[n] := \{1,\dots,n\}$ satisfying $\cup_{k=1}^{K} G_{k}^{*} = \{1,\dots,n\}$ and $G_{k}^{*} \cap G_{m}^{*} = \emptyset$ if $1 \leq k \neq m \leq K$) on the $n$ data points with $X_{i} \sim P_{k}$ for $i \in G_{k}^{*}$, where $P_{1},\dots,P_{K}$ are distinct distributions on $(\bX, \cX)$. We emphasize that $\bX$ does not need to be a Euclidean space. Our goal is to develop a statistically correct and computationally tractable algorithm for recovering the true clustering structure based on the similarity of the observations $X_{1},\dots,X_{n}$. 

\subsection{$K$-means in Hilbert spaces: 0-1 integer program formulation} 

Perhaps one of the most widely used clustering methods is the Euclidean distance-based $K$-means clustering, due to the existence of computationally efficient heuristic algorithms (such as Lloyd's algorithm \cite{Lloyd1982_TIT}). This is a particularly attractive feature for large datasets. Given a sequence of observations $X_{1},\dots,X_{n} \in \R^{p}$ (i.e., $\bX = \bR^{p}$), the (classical) $K$-means clustering method minimizes the total intra-cluster squared Euclidean distances 
\[
\min_{G_{1},\dots,G_{K}} \sum_{k=1}^{K} {1 \over |G_{k}|} \sum_{i,j \in G_{k}} \|X_{i}-X_{j}\|^{2}
\]
over all possible partitions on $[n]$, where $|G_{k}|$ is the cardinality of $G_{k}$. Dropping the sum of squared norms $\sum_{i=1}^{n} \|X_{i}\|^{2}$, we see that the $K$-means clustering is equivalent to the maximization of the total intra-cluster correlations 
\[
\max_{G_{1},\dots,G_{K}} \sum_{k=1}^{K} {1 \over |G_{k}|} \sum_{i,j \in G_{k}} X_{i}^{T} X_{j}.
\]
Here, $X_{i}^{T} X_{j}$ can be viewed as a similarity measure specified by the Euclidean space inner product $a_{ij} = \langle X_{i}, X_{j}\rangle_{\bR^p}$. In general, if space $\bX$ is a Hilbert space $\bH$, then it is natural to generalize this procedure by replacing $\langle\cdot,\cdot\rangle_{\bR^p}$ with the inner product $\langle\cdot,\,\cdot\rangle_{\bH}$ associated with $\bH$, yielding $a_{ij} = \langle X_{i},X_{j}\rangle_{\bH}$.
Henceforth, we will refer to such a $K$-means that uses the inner product in a Hilbert space as a generalized $K$-means. 

\begin{ex}[Functional data clustering]
\label{ex:functional_data}
In many applications, data to be clustered are recorded as curves, surfaces or other things varying over a continuum, such as a time interval and a space span. The random variable underlying data is naturally modelled as a stochastic process $X=\{X(t):\,t\in\mathcal T\}$ in Hilbert space $(\bH,\langle\cdot,\cdot\rangle_{\bH})$, where the sequence of observations $X_1,\ldots,X_n\in\bH$ is an i.i.d.~sample of random variables drawn from the same distribution as $X$. In clustering problems, the law of $X$ is often assumed to be a mixture distribution over $\bH$, with each mixture component as a cluster. When $\mathcal T=[0,1]$ is the unit interval, we can choose $\bH$ as the $L^2$ function space $\bL^2[0,1] =\{f:[0,1]\to \bR:\,\|f\|_{\bL^2}^2 =\int_0^1|f(t)|^2\,dt<\infty\}$ with $\bL^2$-inner product $\langle f,g\rangle_{\bL^2}=\int_0^1 f(t)g(t)\,dt$ for $f,g\in\bL^2[0,1]$. Suppose we have prior information that the observations $\{X_i\}$ are smooth functions, then we can choose a stronger norm to capture the similarity in the (higher-order) derivatives. For example, in \cite{ieva2013multivariate,tarpey2003clustering} and \cite{ferraty2006nonparametric}, $\bH$ are recommended as the Sobolev space with some order $k\in\{1,2\}$ as $\bS^k[0,1]=\{f:[0,1]\to \bR:\,\|f^{(k)}\|_{\bL^2}^2 =\int_0^1|f^{(k)}(t)|^2\,dt<\infty\}$ equipped with inner product $\langle f,g\rangle_{\bS^k} =\sum_{j=0}^k \langle f^{(j)},g^{(j)}\rangle_{\bL^2}$, where $f^{(k)}$ denotes the $k$th derivative of a function $f\in \bS^k[0,1]$. As we will see in Section~\ref{sec:application_FDA}, a higher smoothness order $k$ in the generalized $K$-means generally leads to larger separations among cluster centers (between cluster variation) without significantly increasing fluctuations within clusters (within cluster variation), thereby increasing the clustering signal-to-noise ratio (see Theorem~\ref{thm:rate_SDP_kernel_kmeans_general_case} for a precise definition). 
\qed
\end{ex}

\begin{ex}[Kernel clustering]
\label{ex:polynomial_kernels}
In pattern recognition and natural language processing, it is often crucial to capture the non-linear similarity for non-Euclidean data (such as images and words). A widely used approach is the kernel method \cite{ScholkopfSmola2001_LearningKernels}, where the similarity $a_{ij}$ between $X_i$ and $X_j$ is characterized by a nonlinear positive semi-definite kernel function $\rho:\bX\times\bX \to \bR$ through $a_{ij}=\rho(X_i,X_j)$. Commonly used kernel functions include polynomial kernels
$\rho(x, y) = (\langle x, y \rangle + c)^{r}$ for some positive integer order $r$ and radial basis function (RBF) kernel $\rho(x,y)=\exp\{-\|x-y\|^2/(2h^2)\}$ for some bandwidth parameter $h>0$, where $x,y\in\bR^p$ are the Euclidean embeddings of the original observations (image pixel level vectorizations or word embeddings). According to the celebrated Mercer's theorem, kernel clustering can also be viewed as $K$-means in a high-dimensional feature space: there always exists a Hilbert space (feature space) $\bH$ equipped with inner product $\langle\cdot,\cdot\rangle_{\bH}$ and a feature map $\phi:\bX\rightarrow \bH$, such that 
\begin{align*}
\rho(x,y) = \langle \phi(x),\phi(y)\rangle_{\bH},\quad\forall x,y\in\bX.
\end{align*}
More details about a construction of the feature map can be found in Section~\ref{sec:feature_map}.
From this identity, kernel $K$-means that uses a nonlinear similarity measure $a_{ij}=\rho(X_i,X_j)$ can be cast into the framework of $K$-means in Hilbert spaces by identifying $X_i$ as $\phi(X_i)$. On the other hand, explicit representations for the feature map $\phi$ and the Hilbert space $\bH$ are not necessary in order to implement the kernel $K$-means, which is one of the main practical attractiveness of the method. By choosing a proper kernel $\rho$, we may capture the non-linear similarity in non-Euclidean spaces through implicitly mapping the original data space $\bX$ into a ``high-dimensional" feature space, in which linear boundaries can be drawn to separate the data points.
For example, the polynomial kernel maps into the space spanned by the products of all monomials up to degree $r$. In particular, clusters with centers (expectations under $P_j$'s) that are overlapped in the original Euclidean space may have separated centers (expectations under $\phi_{\#}(P_j)$'s, where $\phi_\#(\mu)$ denotes the pushforward of measure $\mu$ defined through $(\phi_{\#}(\mu))(B)=\mu(\phi^{-1}(B))$ for every measurable subset $B\subset\bH$) in the feature space. \qed
\end{ex}

For a general inner product $\langle\cdot,\cdot\rangle_{\bH}$, quadratic sample complexity is needed for the generalized $K$-means to compute the similarity matrix $A$ \cite{FilipponeCamastraMasulliRovetta2008_PR}. Observe that, for every partition $G_{1},\dots,G_{K}$, there is a one-to-one $n \times K$ {\it assignment matrix} $H = (h_{ik}) \in \{0,1\}^{n \times K}$ such that $h_{ij} = 1$ if $i \in G_{k}$ and $h_{ij} = 0$ if $i \notin G_{k}$. Thus the $K$-means clustering problem can be written as a 0-1 integer program:
\begin{equation}
\label{eqn:kernel_Kmeans_integer_program}
\max \left\{ \langle A, H B H^{T} \rangle : H \in \{0,1\}^{n \times K}, H \vone_{K} = \vone_{n} \right\},
\end{equation}
where $\vone_{n}$ denotes the $n \times 1$ vector of all ones, $a_{ij} = \langle X_{i}, X_{j}\rangle_{\bH}$, and $B = \diag(|G_{1}|^{-1},\dots,|G_{K}|^{-1})$. 

The generalized $K$-means clustering problem (\ref{eqn:kernel_Kmeans_integer_program}) is typically computationally intractable, namely polynomial-time algorithms with exact solutions only exist in certain cases \cite{SongSmolaGrettonBorgwardt2007_ICML}. For instances, the (classical) $K$-means clustering is a worst-case $\mathsf{NP}$-hard integer programming problem with a non-linear objective function \cite{PengWei2007_SIAMJOPTIM}. Exact and partial recovery properties of various SDP relaxations for the $K$-means \cite{PengWei2007_SIAMJOPTIM,LiLiLingStohmerWei2017,FeiChen2018,Royer2017_NIPS,GiraudVerzelen2018} are studied in literature. However, it remains a challenging task to provide statistical guarantees for the generalized $K$-means clustering to capture the non-linear features of non-Euclidean data taking values in a general Hilbert space. 

\subsection{SDP relaxation for $K$-means in Hilbert spaces}
\label{subsec:sdp_kernel_Kmeans}

We consider the SDP relaxations for the generalized $K$-means clustering. Note that every partition $G_{1},\dots,G_{K}$ of $[n]$ can be represented by a partition function $\sigma : [n] \to [K]$ via $G_{k}=\sigma^{-1}(k), k=1,\dots,n$. If we change the variable $Z = H B H^{T}$ in the 0-1 integer program formulation (\ref{eqn:kernel_Kmeans_integer_program}) of the generalized $K$-means, then $Z$ satisfies the following properties: 
\begin{equation}
\label{eqn:constraints_clustering_generic_integer_program}
Z^{T} = Z, \quad Z \succeq 0, \quad \tr(Z) = \sum_{k=1}^{K} |G_{k}| \, b_{kk}, \quad (Z \vone_{n})_{i} = \sum_{k=1}^{K} |G_{k}| \, b_{\sigma(i)k}, \; i=1,\dots,n.
\end{equation}
For the generalized $K$-means $B = \diag(|G_1|^{-1},\dots,|G_{K}|^{-1})$, the last constraint in (\ref{eqn:constraints_clustering_generic_integer_program}) reduces to $Z \vone_{n} = \vone_{n}$, which does not depend on the partition function $\sigma$. Thus we can relax the generalized $K$-means clustering to the SDP problem: 
\begin{equation}
\label{eqn:clustering_Kmeans_sdp}
\hat{Z} = \argmax \left\{ \langle A, Z \rangle : Z \in \sC \right\} \text{ with } \sC = \{ Z^{T} = Z, Z \succeq 0, \tr(Z) = K, Z \vone_{n} = \vone_{n}, Z \geq 0 \},
\end{equation}
where $Z \succeq 0$ means that $Z$ is positive semidefinite and $Z \geq 0$ means that all entries of $Z$ are non-negative. We shall use $\hat{Z}$ to estimate the true ``membership matrix" $Z^{*}$, where 
\begin{equation}
\label{eqn:Kmeans_true_membership_matrix}
Z_{ij}^{*} = \left\{
\begin{array}{cc}
1/n_{k} & \text{if } i, j \in G_{k}^{*} \\
0 & \text{otherwise} \\
\end{array}
\right. ,
\end{equation}
where $n_k = |G^*_k|$. Note that $Z^{*} \in \sC$ is a projection matrix such that $Z^{*} Z^{*} = Z^{*}$. If $X_{1},\dots,X_{n} \in \bR^{p}$ (i.e., $\bX = \bR^{p}$) and $a_{ij} = X_{i}^{T} X_{j}$ is the Euclidean space inner product, then (\ref{eqn:clustering_Kmeans_sdp}) is the SDP proposed in \cite{PengWei2007_SIAMJOPTIM}. Observe that the SDP relaxation (\ref{eqn:clustering_Kmeans_sdp}) does not require the knowledge of the cluster sizes other than the number of clusters $K$. Thus it can handle the general case for unequal cluster sizes.

\subsection{Rate of convergence of SDP for $K$-means in Hilbert spaces}
\label{subsec:rate_SDP_kernel_Kmeans}

Now we are in the position to state the rate of convergence for the SDP relaxation (\ref{eqn:clustering_Kmeans_sdp}) for the generalized $K$-means clustering. For simplicity, we assume that the trace norms of the covariance operators for the $K$-cluster distributions $P_{1},\dots,P_{K}$ are equal. If the trace norms are not all equal, then a similar de-biased SDP in \cite{BuneaGiraudRoyerVerzelen2016} can be considered. Denote the minimum cluster size as $\underline{n} = \min_{1 \leq k \leq K} n_{k}$. 

\begin{thm}[Exponential rate of convergence of SDP for generalized $K$-means]
\label{thm:rate_SDP_kernel_kmeans_general_case}
Let $X_{1},\dots,X_{n}$ be a sample of independent random variables in Hilbert space $\bH$ such that $X_{i} \sim P_{k}$ for $i \in G_{k}^{*}$. Let $\langle\cdot,\cdot\rangle_{\bH}$ and $\|\cdot\|_{\bH}$ be the associated inner product and Hilbert norm with $\bH$, and $\mu_{k}= \E X_{i}$, $\Sigma_{k} = \E[(X_{i}-\mu_k) \otimes (X_{i}-\mu_k)]$ be the covariance operator of $X_{i}, i \in G_{k}^{*}$. Suppose that $\bH$ is separable, and $X_{i} \sim \subg(\Sigma_{k})$ for $i \in G_{k}^{*}$ such that $\|X_{i}\|_{\psi_{2},\Sigma_{k}} \leq L$ and Assumption \ref{ass:bernstein_squared_norm} holds with $\Gamma_i = \Sigma_i$ therein being equal to $\Sigma_k$. In addition, assume $(\Sigma_{k})_{k=1}^{K}$ to be positive definite trace class, and $\|\Sigma_{1}\|_{\tr} = \cdots = \|\Sigma_{K}\|_{\tr}$. 
Define
\[
\SNR^{2} = {\Delta^{2} \over L^{2} \|\Sigma\|_{\op}} \wedge {\underline{n} \Delta^{4} \over L^{4} \|\Sigma\|_{\HS}^{2}}\quad\mbox{with } \Delta = \min_{1\leq i\neq j\leq K} \|\mu_i-\mu_j\|_{\bH}
\]
as the squared signal-to-noise ratio, and suppose $\Sigma \succeq \Sigma_{k}$ for all $k = 1,\dots,K$.
Then there exist universal constants $c_0, c'_0, c, C_{1}, C_{2} > 0$ such that as long as $\SNR^2 \geq  c_0\, n/\underline{n}$ and $\underline{n}^{2} K \geq c'_0 n$, it holds that
\begin{equation}
\label{eqn:rate_SDP_kernel_kmeans_general_case}
|\hat{Z}-Z^{*}|_{1} \leq C_1 \exp(-C_{2} \SNR^{2})
\end{equation}
with probability at least $1-c/n^{2}$.
\end{thm}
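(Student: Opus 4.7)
The plan is to combine the SDP optimality of $\hat Z$ with a signal/noise decomposition, in which the quadratic noise is controlled by Theorem~\ref{thm:HW_ineq_Hilbert_space}. Write $V = Z^* - \hat Z$ and $N = A - \E[A]$, where $A_{ij} = \langle X_i, X_j\rangle_{\bH}$. Optimality of $\hat Z$ in (\ref{eqn:clustering_Kmeans_sdp}) together with the feasibility of $Z^*$ gives $\langle A, V\rangle \leq 0$, so $\langle \E[A], V\rangle \leq -\langle N, V\rangle$. Because $\tr(V) = K - K = 0$ and the common trace $\tau = \|\Sigma_k\|_{\tr}$ contributes only $\tau\tr(V) = 0$ to $\langle \E[A], V\rangle$, the effective signal matrix reduces to $M_{ij} = \langle \mu_{k(i)}, \mu_{k(j)}\rangle_{\bH}$. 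The cosine identity and row-sum constraint then yield, for any $Z \in \sC$,
\[
\langle M, Z\rangle = \sum_{i=1}^n \|\mu_{k(i)}\|_{\bH}^2 - \tfrac{1}{2}\sum_{i,j=1}^n \|\mu_{k(i)} - \mu_{k(j)}\|_{\bH}^2\, Z_{ij},
\]
so $\langle M, V\rangle \geq (\Delta^2/2)\, S$ with $S := \sum_{k\neq l}\sum_{i \in G_k^*,\, j \in G_l^*}\hat Z_{ij}$, the total off-cluster mass of $\hat Z$. A structural argument exploiting that every $Z \in \sC$ is doubly stochastic and PSD---so each cluster block of $\hat Z$ carries top eigenvalue one on $\vone_{G_k^*}/\sqrt{n_k}$ and any within-cluster deviation must be compensated by proportional off-cluster flow (since a block trace below one is impossible)---upgrades this into $|V|_1 \leq c_1 S$ for a universal constant $c_1$. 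This yields the basic inequality $(\Delta^2/(2 c_1))\,|V|_1 \leq |\langle N, V\rangle|$.

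Next, with $\epsilon_i = X_i - \mu_{k(i)}$ (centered and $\subg(\Sigma)$, since $\Sigma_k \preceq \Sigma$ implies $X_i\sim\subg(\Sigma)$), decompose
\[
\langle N, V\rangle = 2\sum_{i,j} V_{ij}\,\langle \mu_{k(i)}, \epsilon_j\rangle_{\bH} + \sum_{i,j} V_{ij}\bigl(\langle \epsilon_i, \epsilon_j\rangle_{\bH} - \E\langle\epsilon_i,\epsilon_j\rangle_{\bH}\bigr).
\]
The linear part is sub-gaussian in the $\epsilon_j$'s with variance proxy governed by the $\bH$-norms of the aggregated means $\sum_i V_{ij}\mu_{k(i)}$ and is handled by standard sub-gaussian tail bounds. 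The quadratic part is exactly the object of Theorem~\ref{thm:HW_ineq_Hilbert_space} with weight matrix $V$ and common sub-gaussian operator $\Sigma$ (Assumption~\ref{ass:bernstein_squared_norm} being in force), and the resulting tail controls it in terms of $\|V\|_{\HS}$ and $\|V\|_{\op}$. Both norms admit uniform a priori bounds: $\|V\|_{\op} \leq 2$ because $Z^*$ and $\hat Z$ are both doubly stochastic (hence of operator norm one by Birkhoff), and $\|V\|_{\HS}^2 \leq |V|_1$ via the entrywise estimate $|V_{ij}| \leq 1$ (which itself follows from the PSD Cauchy--Schwarz $\hat Z_{ij}^2 \leq \hat Z_{ii}\hat Z_{jj}\leq 1$).

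The main obstacle is the \emph{uniform} control of $|\langle N, V\rangle|$ as $V$ ranges over the data-dependent feasible set $\{Z^* - Z : Z \in \sC\}$. I would handle this by peeling over dyadic scales $|V|_1 \in [r, 2r]$: on each slab, Theorem~\ref{thm:HW_ineq_Hilbert_space} applied at threshold $t \asymp \alpha \Delta^2 r$, after substituting $\|V\|_{\HS}^2\leq |V|_1$ and $\|V\|_{\op}\leq 2$, gives a failure probability of the form $\exp\bigl[-C\min\bigl(\alpha^2 \Delta^4 r/(L^4 \|\Sigma\|_{\HS}^2),\ \alpha \Delta^2 r/(L^2 \|\Sigma\|_{\op})\bigr)\bigr]$, whose exponent is exactly $r\cdot \SNR^2$ up to the extra $\underline n$ factor obtained from the refined entrywise bound $|V_{ij}|\lesssim 1/\underline n$ (each row of $\hat Z$ must spread its unit mass). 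An $\epsilon$-net on each slab contributes a polynomial-in-$n$ covering cost that is absorbed by the hypotheses $\SNR^2 \gtrsim n/\underline n$ and $\underline n^2 K \gtrsim n$. Summing over the $O(\log n)$ dyadic scales and choosing $\alpha$ small produces, on an event of probability at least $1 - c/n^2$, the uniform estimate
\[
|\langle N, V\rangle| \leq \frac{\Delta^2}{4 c_1}\,|V|_1 + C_1 \Delta^2 \exp(-C_2\, \SNR^2),
\]
which when plugged back into the basic inequality yields (\ref{eqn:rate_SDP_kernel_kmeans_general_case}). The dimension-free character of Theorem~\ref{thm:HW_ineq_Hilbert_space} is essential here: a dimension-dependent variant such as the Bernstein bound (\ref{lem:Hanson-Wright_ineq_Hilbert_space_linear_part_less_sharp}) would replace $\|\Sigma\|_{\HS}$ and $\|\Sigma\|_{\op}$ by $\|\Sigma\|_{\tr}$, which can be arbitrarily larger in infinite-dimensional settings, precluding the exponential rate.
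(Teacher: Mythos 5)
There is a genuine gap, and it sits exactly at the step you compress into ``an $\epsilon$-net on each slab contributes a polynomial-in-$n$ covering cost.'' The set $\{Z^*-Z: Z\in\sC,\ |Z^*-Z|_1\in[r,2r]\}$ lives in $n^2$ dimensions, and no norm-based net of it at resolution $\epsilon r$ has cardinality polynomial in $n$; this is precisely the ``naive union bound via covering'' that the paper says does not work and replaces by the ordered-sum technique of Fei--Chen. Moreover, even granting free uniformity on each slab, your thresholding $t\asymp\alpha\Delta^2 r$ cannot produce the claimed uniform estimate with additive error $C_1\Delta^2\exp(-C_2\SNR^2)$: at the scales you ultimately need, $r\asymp\exp(-C_2\SNR^2)$, the per-point Hanson--Wright bound $\exp\bigl[-c\,r\,\SNR^2\bigr]$ is vacuous (it tends to $1$), so dyadic peeling bottoms out long before the target scale. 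The paper's mechanism is different: for the linear term and for the $Z^*T_3$-type terms it applies Lemma \ref{lem:ordered_sum_inequality} (monotone rearrangement) to reduce the supremum over feasible weights to order-statistics partial sums of size $s=1,\dots,n$, takes thresholds of the form $s\sqrt{\log(nK/s)}$ so that the tail $\exp(-cs\log(nK/s))$ beats the combinatorial cost $\binom{n}{s}$, handles $(I-Z^*)T_3(I-Z^*)$ separately through $\|T_3\|_{\op}$ and Lemma \ref{lem:some_ineq_feasible_set}\eqref{eqn:ineq_2_feasible_set}, and then obtains a \emph{self-bounding} inequality in which the noise is $O\bigl(S\sqrt{\log(nK^3/S)}\bigr)$; solving it forces $\log(nK^3/S)\gtrsim\SNR^2$, which is where the exponential rate actually comes from. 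Your scheme never produces the $\log(1/S)$ on the noise side, so it cannot reach \eqref{eqn:rate_SDP_kernel_kmeans_general_case}.

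Two further structural claims are incorrect or unjustified. First, $|Z^*-\hat Z|_1\le c_1 S$ with a \emph{universal} $c_1$ is false for unbalanced clusters: the correct comparison is \eqref{eqn:ineq_3_feasible_set} of Lemma \ref{lem:some_ineq_feasible_set}, $|Z^*-\hat Z|_1\le (2n/\underline{n})\,|Z^*-Z^*\hat Z|_1$, and the $n/\underline{n}$ factor is exactly why the theorem assumes $\SNR^2\ge c_0\,n/\underline{n}$ (also, a block trace below one is perfectly possible once off-cluster mass is present, so your ``compensation'' argument breaks). Second, the entrywise bound $|V_{ij}|\lesssim 1/\underline{n}$, which you need to convert $\|V\|_{\HS}^2\le|V|_1$ into the $\underline{n}$-improved branch of $\SNR^2$, does not hold for arbitrary feasible $\hat Z$ (diagonal entries $\hat Z_{ii}$ can be of order one); the paper obtains the $1/\sqrt{\underline n}$ gains instead from the structure of $Z^*$ (matrices $D^{(k,m)}$ with $\|D^{(k,m)}\|_{\HS}=\sqrt{q/n_m}$) and from \eqref{eqn:ineq_2_feasible_set}. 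Finally, in the linear noise term you must exploit $\sum_j V_{ij}=0$ to re-center onto mean \emph{differences} $\mu_k-\mu_m$ and then couple each pair's contribution to the signal $\|\mu_k-\mu_m\|^2|\hat Z_{G_k^*G_m^*}|_1$ via Cauchy--Schwarz (yielding the $\sqrt{\langle T_1,Z^*-\hat Z\rangle}$ factor); with ``aggregated means'' and a single global $\Delta$ the bound involves separations you cannot absorb, and translation invariance is lost. The overall skeleton (optimality inequality, cancellation of the diagonal expectation via equal traces, Hilbert-space Hanson--Wright for the quadratic part) matches the paper, but these missing ingredients are the substance of the proof.
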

This theorem characterizes the hardness of clustering through the squared signal-to-noise ratio $\SNR^2$ that depends on the ratio of squared between-cluster separation rate $\Delta^2$ to within-clustering variation $L^2 \|\Sigma\|_{\op}$ or $L^2 \|\Sigma\|_{\HS}$.
We postpone its proof to Section~\ref{sec:proof_of_k_means}. It turns out that both terms in $\SNR^2$ are necessary depending on different regimes of parameters $\Delta$ and $\Sigma$.
For the optimality of the exponent $\SNR^2$ in the convergence rate for Euclidean space clustering, namely $\bH=\bR^p$, we refer to Section 3.3 of \cite{GiraudVerzelen2018} for a detailed discussion. In particular, if we instead use the weaker version of the concentration inequality~\eqref{lem:Hanson-Wright_ineq_Hilbert_space_linear_part_less_sharp}, then an extra $p$ factor will appear in the denominator of each term in $\SNR^2$, which is clearly suboptimal. 


Our proof is based on the inequality $\langle A, Z^\ast\rangle  \leq \langle A, \hat Z\rangle$, which is true due to the optimality of $\hat Z$ and the feasibility of $Z^\ast$. In particular, in the analysis of $\langle A, \hat Z - Z^\ast\rangle$ by decomposing the similarity matrix $A$ as a sum of its expectation and random fluctuations, one remainder term caused by the random fluctuations involves a quadratic form over Hilbert space $\bH$ as the $Q$ in Theorem~\ref{thm:HW_ineq_Hilbert_space}. In particular, we prove a uniform version of the Hanson-Wright inequality that leads to the exponential convergence rate~\eqref{eqn:rate_SDP_kernel_kmeans_general_case} in Theorem~\ref{thm:rate_SDP_kernel_kmeans_general_case} by combining our Theorem~\ref{thm:HW_ineq_Hilbert_space} with a careful union bound technique developed in \cite{FeiChen2018} that utilizes the geometric structure of $A$ and improves upon a naive union bound argument via covering.

Theorem~\ref{thm:rate_SDP_kernel_kmeans_general_case} provides a partial recovery bound for clustering. Next, we show that exact recovery can be achieved by properly rounding the SDP solution $\hat Z$.
More specifically, we consider the rounding algorithm that proceeds as follows: 1.~let $j_1=1$ and $\hat G_{1}$ be the set of all indices $i$ such that $\hat Z_{j_1i} \geq \frac{1}{2}\hat Z_{j_1j_1}$; 2.~let $j_2$ be the smallest index in $[n]\setminus \hat {G}_1$ and $\hat G_{2}$ be the set of all indices $i$ such that $\hat Z_{j_2i} \geq \frac{1}{2}\hat Z_{j_2j_2}$; \ldots, end until the remainder index set $[n]\setminus \bigcup_{k=1}^{\hat K} \hat G_k$ becomes empty for some $\hat K\geq 1$.
Thanks to Theorem \ref{thm:rate_SDP_kernel_kmeans_general_case}, exact recovery of the true clustering structure is an immediate consequence when $\SNR^2\gtrsim \max\{n/\underline{n},\,\log n \}$.

\begin{cor}[Exact recovery of SDP for generalized $K$-means]\label{coro:exact_recovery}
In the setting of Theorem \ref{thm:rate_SDP_kernel_kmeans_general_case}, suppose $\SNR^2\geq c_1 \max\{n/\underline{n},\,\log n \}$ and $\underline{n}^{2} K \geq c_2 n$ for some universal constants $c_1, c_2>0$, then
\[
\Prob(\hat K = K \mbox{ and } \hat{G}_{k}=G_{k}^{*}, \; \forall k=1,\dots,K) \geq 1 - C n^{-2}
\]
for some universal constant $C > 0$. 
\end{cor}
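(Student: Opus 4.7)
The plan is to combine the partial recovery bound of Theorem~\ref{thm:rate_SDP_kernel_kmeans_general_case} with a deterministic analysis of the rounding step. Under the hypothesis $\SNR^2 \geq c_1 \max\{n/\underline{n},\log n\}$ and $\underline{n}^2 K \geq c_2 n$, after absorbing $c_0, c_0'$ into $c_1, c_2$, Theorem~\ref{thm:rate_SDP_kernel_kmeans_general_case} applies and yields
\[
|\hat Z - Z^*|_1 \leq C_1 \exp(-C_2\,\SNR^2)
\]
on an event $\mathcal E$ of probability at least $1 - c/n^{2}$. By choosing the constant $c_1$ sufficiently large (so that $c_1 C_2 \geq 3$, say), the right-hand side is bounded by $C_1 n^{-3}$, which is strictly less than $1/(3n)$ for all $n$ large enough. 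Using the trivial entrywise bound $\|\hat Z - Z^*\|_\infty \leq |\hat Z - Z^*|_1$, on $\mathcal E$ we therefore have $\|\hat Z - Z^*\|_\infty < 1/(3n) \leq 1/(3 n_k)$ for every $k \in [K]$.

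Next I will argue that on $\mathcal E$ the rounding rule separates same-cluster from different-cluster indices without error. The oracle matrix $Z^*$ from \eqref{eqn:Kmeans_true_membership_matrix} satisfies, for any $j \in G_k^*$,
\[
Z^*_{ji} - \tfrac{1}{2} Z^*_{jj} \;=\; \begin{cases} +1/(2 n_k), & i \in G_k^*, \\ -1/(2 n_k), & i \notin G_k^*, \end{cases}
\]
so there is a two-sided gap of size $1/(2 n_k)$ around zero. The perturbation incurred by replacing $Z^*$ with $\hat Z$ is bounded by
\[
\bigl|(\hat Z_{ji} - \tfrac{1}{2}\hat Z_{jj}) - (Z^*_{ji} - \tfrac{1}{2} Z^*_{jj})\bigr| \;\leq\; \tfrac{3}{2} \|\hat Z - Z^*\|_\infty \;<\; 1/(2 n_k),
\]
so for any pivot $j \in G_k^*$ the sign of $\hat Z_{ji} - \tfrac{1}{2}\hat Z_{jj}$ coincides with $\ind\{i \in G_k^*\}$, and the thresholding in the algorithm exactly reproduces membership in $G_k^*$.

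A short induction now concludes the proof. With $j_1 = 1$ belonging to some cluster $G_{k_1}^*$, the first pass returns $\hat G_1 = G_{k_1}^*$ exactly. Then $j_2$ is the smallest index outside $G_{k_1}^*$, hence lies in some $G_{k_2}^*$ with $k_2 \neq k_1$, and the same threshold argument gives $\hat G_2 = G_{k_2}^*$. Iterating exhausts $[n]$ in exactly $K$ steps, producing $\hat K = K$ and a bijection between $\{\hat G_\ell\}$ and $\{G_k^*\}$. The claimed probability bound $1 - C n^{-2}$ is inherited from $\mathcal E$.

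The main (and essentially only) obstacle is calibrating the constant $c_1$ in the $\log n$ part of the $\SNR^2$ assumption so that $\exp(-C_2 \SNR^2)$ dominates the oracle gap $1/(2\underline n)$; beyond this tuning, no additional probabilistic estimate is required, and the remainder is a clean entrywise perturbation computation driven by Theorem~\ref{thm:rate_SDP_kernel_kmeans_general_case}.
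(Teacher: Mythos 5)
Your proof is correct and follows essentially the same route as the paper: take $c_1$ large enough that the Theorem~\ref{thm:rate_SDP_kernel_kmeans_general_case} bound forces $|\hat Z - Z^*|_1$ below the order-$1/\underline{n}$ gap of $Z^*$, bound entries by the $\ell_1$ norm, and induct over the rounding passes to show each threshold step recovers one true cluster. Your sign-of-$\hat Z_{ji}-\tfrac12\hat Z_{jj}$ formulation even handles unequal cluster sizes directly, whereas the paper restricts to equal sizes ``for easy presentation''; the small-$n$ regime is, as usual, absorbed into the universal constant $C$.
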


\subsection{Implications in functional data clustering}\label{sec:application_FDA}
In this subsection, we discuss the consequence of applying Theorem~\ref{thm:rate_SDP_kernel_kmeans_general_case} to Example~\ref{ex:functional_data}. For simplicity, we assume that for each $k=1,\ldots,K$, the sampling measure $P_k$ is a Gaussian process (GP) over Hilbert space $\bL^2[0,1]$ with inner product $\langle\cdot,\cdot\rangle_{\bL^2}$.  In particular, we use Theorem~\ref{thm:rate_SDP_kernel_kmeans_general_case} to study and compare the uses of different inner products (such as Sobolev inner products with different orders) in constructing the similarity matrix $A$ in the generalized $K$-means for functional data clustering.

Recall the definition of a Gaussian random variable in a Hilbert space in Definition~\ref{defin:gaussian_Hilbert_space}. When the Hilbert space is a function space, the law $N(\mu,\Sigma)$ of a GP is completely determined by its mean function $\mu:\,[0,1]\to\bR \in \bL^2[0,1]$ and covariance function $\Sigma:\,[0,1]^2\to \bL^2[0,1]$, where $\mu(t) = \E[X(t)]$ and $\Sigma(t,t')=\E[(X(t)-\mu(t))(X(t')-\mu(t'))]$ for any GP realization $X=\{X(t):\,t\in[0,1]\}$. The covariance function $\Sigma$ can be identified with the covariance operator through
\begin{align*}
\Sigma f (t) = \int_0^1\Sigma(t,t')\,f(t')\,dt',\quad\mbox{for all }f\in\bL^2[0,1] \mbox{ and }t\in[0,1].
\end{align*}
Suppose now we have another Hilbert space $\bH'\subset \bH$, such as the Sobolev space $\bS^k[0,1]$ for some $k\geq 1$, such that the second moment of $\|X-\mu\|_{\bH'}$ is still bounded relative to the stronger norm $\|\cdot\|_{\bH'}$ associated with $\bH'$, that is $\E [\|X-\mu\|_{\bH'}^2] < \infty$. This implies $X-\mu \in\bH'$ almost surely, and $\langle h,X-\mu\rangle_{\bH'}$ is Gaussian for all $h\in\bH'$. As a consequence, $X-\mu$ remains a Gaussian random variable in the new Hilbert space $\bH'$ \cite{van2008reproducing}, as long as $\E [\|X-\mu\|_{\bH'}^2] < \infty$. Here $\mu$ may or may not belong to $\bH'$ depending on whether $\|\mu\|_{\bH'}$ is finite or infinite. We use $\Sigma'$ to denote its covariance operator as a Gaussian random variable in $\bH'$. In cases where $\Sigma$ has rapid eigenvalue decay (polynomial or exponential), the operator and the Hilbert-Schmidt norms of $\Sigma$ and $\Sigma'$ will be dominated by their respective top eigenvalues, henceforth comparable in magnitudes. 

Returning to the functional data clustering, we assume $X_i\sim N(\mu_k,\Sigma_k)$ for $i\in G_k^\ast$ as Gaussian random variables in $\bH$.
Consider two choices $a_{ij} =  \langle X_i,X_j\rangle_{\bH}$ and $a'_{ij} = \langle X_i,X_j\rangle_{\bH'}$ for constructing the similarity matrix $A$ in the SDP for the generalized $K$-means clustering. From our previous discussion, we know that $X_i-\mu_k$ remains Gaussian in $\bH'$ as long as $\E [\|X_i-\mu_k\|_{\bH'}^2] < \infty$. We use $\Sigma'_k$ to denote the covariance operator of $X_i-\mu_k$ as a Gaussian random variable in $\bH'$. 
We can then apply Theorem~\ref{thm:rate_SDP_kernel_kmeans_general_case} with Hilbert space $\bH$ and $\bH'$ to obtain the signal-to-noise ratios under these two choices,
\begin{align*}
\SNR^{2} &= {\Delta^{2} \over L^{2} \|\Sigma\|_{\op}} \wedge {\underline{n} \Delta^{4} \over L^{4} \|\Sigma\|_{\HS}^{2}}\quad\mbox{with } \Delta = \min_{1\leq i\neq j\leq K} \|\mu_i-\mu_j\|_{\bH},\quad\mbox{and}\\
(\SNR')^{2} &= {(\Delta')^{2} \over L^{2} \|\Sigma'\|_{\op}} \wedge {\underline{n} (\Delta')^{4} \over L^{4} \|\Sigma'\|_{\HS}^{2}}\quad\mbox{with } \Delta' = \min_{1\leq i\neq j\leq K} \|\mu_i-\mu_j\|_{\bH'},
\end{align*}
where $\Sigma\succeq\Sigma_k$ and $\Sigma'\succeq \Sigma'_k$ for each $k$. The denominators of $\SNR^2$ and $(\SNR')^2$ are comparable when $\Sigma$ and $\Sigma'$ have rapid eigenvalue decay, while the signal strength $\Delta'$ can be much larger than $\Delta$, making the overall $(\SNR')^2$ larger as well. For functional data with $\bH=\bL^2[0,1]$, faster eigenvalue decay in the covariance operator corresponds to a higher smoothness order of the sample path. For example, if $\gamma_1\geq\gamma_2\geq\ldots$ are ordered eigenvalues of $\Sigma$ with $\gamma_j\approx j^{-2\beta-1}$ for $j=1,2,\ldots$ and some $\beta>0$, then sample paths from $N(0,\Sigma)$ are at least $\beta$ times differentiable \cite{rasmussen2004gaussian} almost surely. If we choose $\bH'$ to be $\bS^{k}[0,1]$ for any $0\leq k\leq\lfloor\beta\rfloor$, where $\lfloor\beta\rfloor$ denotes the largest integer smaller than $\beta$, then $\E [\|X_i-\mu_k\|_{\bH'}^2] < \infty$. On the other hand side, $\Delta'$ can be much larger than $\Delta$ when 
the difference $\{\mu_i-\mu_j:\,1\leq i\neq j\leq K\}$ has smoothness order (characterized via the decay rate of coefficients with respect to eigenfunctions $\{e_i\}$ of $\Sigma$) lower than $k$. In such scenarios, using the inner product induced by a stronger norm in constructing the similarity matrix $A$ may increase the signal-to-noise ratio and reduce the SDP error $|\hat Z-Z^\ast|_1$.

\section{Proof of main results}
\label{sec:proof_main_results}

\subsection{Proof of main results in Section \ref{subsec:hanson-wright_hilbert_space}}

In this subsection, we prove Proposition \ref{prop:Hanson-Wright_ineq_Hilbert_space_diagonal_free}, Theorem \ref{thm:Hanson-Wright_ineq_Hilbert_space_nonnegative_diagonal}, and \ref{thm:HW_ineq_Hilbert_space}.

\begin{proof}[Proof of Proposition \ref{prop:Hanson-Wright_ineq_Hilbert_space_diagonal_free}]
By Markov's inequality, we have for any $\lambda > 0$ and $t > 0$, 
\[
\Prob(S \geq t) \leq e^{-\lambda t} \E[e^{\lambda S}].
\]
{\bf Step 1: decoupling.} Let $\delta_{1},\dots,\delta_{n} \in \{0,1\}$ be i.i.d. symmetric Bernoulli random variables (i.e., $\Prob(\delta_{i} = 0) = \Prob(\delta_{i} = 1) = 1/2$) that are independent of $X_{1},\dots,X_{n}$. Since 
\[
\E[\delta_{i}(1-\delta_{j})] = \left\{
\begin{array}{cc}
0 & \text{if } i=j \\
1/4 & \text{if } i \neq j\\
\end{array}
\right. ,
\]
we have $S = 4\E_{\delta}[S_{\delta}]$, where $S_{\delta} = \sum_{i,j=1}^{n} \delta_{i}(1-\delta_{j}) a_{ij} \langle X_{i}, X_{j} \rangle$ and $\E_{\delta}[\cdot]$ is the expectation taken with respect to the random variables $\delta_{i}$. Below, $\E_{X}[\cdot]$ is similarly defined. By Jensen's inequality, we get 
\[
\E[e^{\lambda S}] \leq E_{X,\delta}[e^{4 \lambda S_{\delta}}].
\]
Let $\Lambda_{\delta} = \{i \in [n] : \delta_{i}=1\}$. Then we can write 
\[
S_{\delta} = \sum_{i \in \Lambda_{\delta}} \sum_{j \in \Lambda_{\delta}^{c}} a_{ij} \langle X_{i}, X_{j} \rangle = \sum_{j \in \Lambda_{\delta}^{c}} \langle \sum_{i \in \Lambda_{\delta}} a_{ij} X_{i}, X_{j} \rangle.
\]
Taking the expectation with respect to $(X_{j})_{j \in \Lambda_{\delta}^{c}}$ (i.e., conditioning on $(\delta_{i})_{i=1,\dots,n}$ and $(X_{i})_{i \in \Lambda_{\delta}}$), it follows from the assumption $X_{i}$ are independent $\subg(\Gamma)$ with mean zero that 
\begin{align*}
\E_{(X_{j})_{j \in \Lambda_{\delta}^{c}}}[ e^{4\lambda S_{\delta}} ] \leq& e^{8 \lambda^{2} \sigma_{\delta}^{2}},
\end{align*}
where $\sigma_{\delta}^{2} = \sum_{j \in \Lambda_{\delta}^{c}} L_{j}^{2} \langle \Gamma (\sum_{i \in \Lambda_{\delta}} a_{ij} X_{i}), (\sum_{i \in \Lambda_{\delta}} a_{ij} X_{i}) \rangle$. Thus we get 
\[
\E_{X}[ e^{4\lambda S_{\delta}} ] \leq \E_{X} \left[ e^{8 \lambda^{2} \sigma_{\delta}^{2}} \right].
\]
{\bf Step 2: reduction to Gaussian random variables.} For $j=1,\dots,n$, let $g_{j}$ be independent $N(0, 16 L_{j}^{2} \Gamma)$ random variables in $\bH$ that are independent of $X_{1},\dots,X_{n}$ and $\delta_{1},\dots,\delta_{n}$. Define 
\[
T := \sum_{j \in \Lambda_{\delta}^{c}} \langle g_{j}, \sum_{i \in \Lambda_{\delta}} a_{ij} X_{i} \rangle.
\]
Then, by the definition of Gaussian random variables in $\bH$, we have 
\begin{align*}
\E_{g}[e^{\lambda T}] =& \prod_{j \in \Lambda_{\delta}^{c}} \E_{g} \left[ e^{\langle g_{j}, \lambda \sum_{i \in \Lambda_{\delta}} a_{ij} X_{i} \rangle} \right] \\
=& \exp\left( 8 \lambda^{2} \sum_{j \in \Lambda_{\delta}^{c}} L_{j}^{2} \langle \Gamma (\sum_{i \in \Lambda_{\delta}} a_{ij} X_{i}), (\sum_{i \in \Lambda_{\delta}} a_{ij} X_{i}) \rangle \right) = \exp\left( 8 \lambda^{2} \sigma_{\delta}^{2} \right).
\end{align*}
So it follows that 
\[
\E_{X}[ e^{4\lambda S_{\delta}} ] \leq \E_{X,g}[e^{\lambda T}].
\]
Since $T = \sum_{i \in \Lambda_{\delta}} \langle \sum_{j \in \Lambda_{\delta}^{c}} a_{ij} g_{j}, X_{i} \rangle$, we have 
\[
\E_{(X_{i})_{i \in \Lambda_{\delta}}}[ e^{\lambda T} ] \leq \exp\left( {\lambda^{2} \over 2} \sum_{i \in \Lambda_{\delta}} L_{i}^{2} \langle \Gamma (\sum_{j \in \Lambda_{\delta}^{c}} a_{ij} g_{j}), (\sum_{j \in \Lambda_{\delta}^{c}} a_{ij} g_{j}) \rangle \right),
\]
which implies that 
\begin{equation}
\label{eqn:intermediate_step_HW}
\E_{X}[ e^{4\lambda S_{\delta}} ] \leq \E_{g} \left[ \exp\left( \lambda^{2} \tau_{\delta}^{2} / 2 \right) \right], 
\end{equation}
where $\tau_{\delta}^{2} = \sum_{i \in \Lambda_{\delta}} L_{i}^{2} \langle \Gamma (\sum_{j \in \Lambda_{\delta}^{c}} a_{ij} g_{j}), (\sum_{j \in \Lambda_{\delta}^{c}} a_{ij} g_{j}) \rangle$. 

\noindent {\bf Step 3: diagonalization.} Since $\Gamma \in \cB(\bH)$ is trace class (thus compact) and positive definite, it follows from Theorem 4.2.4 in \cite{HsingEubank2015_Wiley} that the eigendecomposition of $\Gamma$ is given by 
\[
\Gamma = \sum_{k=1}^{\infty} \gamma_{k} (e_{k} \otimes e_{k}),
\]
where $\gamma_{k} \geq 0$ are eigenvalues of $\Gamma$ and $(e_{k})_{k=1}^{\infty}$ are eigenfunctions forming a CONS of $\overline{\Image(\Gamma)}$; namely $\Gamma h = \sum_{k=1}^{\infty} \gamma_{k} \langle h, e_{k} \rangle e_{k}$ for every $h \in \bH$. Here, $\otimes$ denotes the tensor product and $\overline{\Image(\Gamma)}$ denotes the closure of the image of $\Gamma$. In addition, there exists a unique positive definite square root operator $\Gamma^{1/2} \in \cB(\bH)$ such that $\Gamma^{1/2} \Gamma^{1/2} = \Gamma$ (cf. Theorem 3.4.3 in \cite{HsingEubank2015_Wiley}). Then we have $\Gamma^{1/2} g_{j} = \sum_{k=1}^{\infty} \gamma_{k}^{1/2} \langle g_{j}, e_{k} \rangle e_{k}$ and 
\begin{align*}
\tau_{\delta}^{2} =& \sum_{i \in \Lambda_{\delta}} L_{i}^{2} \langle \Gamma^{1/2} (\sum_{j \in \Lambda_{\delta}^{c}} a_{ij} g_{j}), \Gamma^{1/2} (\sum_{j \in \Lambda_{\delta}^{c}} a_{ij} g_{j}) \rangle = \sum_{i \in \Lambda_{\delta}} L_{i}^{2} \| \Gamma^{1/2} (\sum_{j \in \Lambda_{\delta}^{c}} a_{ij} g_{j}) \|^{2} \\
& \quad = \sum_{i \in \Lambda_{\delta}} L_{i}^{2} \| \sum_{j \in \Lambda_{\delta}^{c}} a_{ij} \Gamma^{1/2} g_{j} \|^{2} = \sum_{i \in \Lambda_{\delta}} L_{i}^{2} \| \sum_{k=1}^{\infty} \gamma_{k}^{1/2} (\sum_{j \in \Lambda_{\delta}^{c}} a_{ij} \langle g_{j}, e_{k} \rangle ) e_{k} \|^{2} \\
& \quad = \sum_{k=1}^{\infty} \gamma_{k} \sum_{i \in \Lambda_{\delta}} \left( \sum_{j \in \Lambda_{\delta}^{c}} L_{i} a_{ij}  \langle g_{j}, e_{k} \rangle  \right)^{2},
\end{align*}
where the last step follows from Parseval's identity. Note that 
\[
\|\Gamma^{1/2} e_{k}\|^{2} = \langle \Gamma e_{k}, e_{k} \rangle = \langle \gamma_{k} e_{k}, e_{k} \rangle = \gamma_{k}.
\]
Thus for any $\lambda \in \bR$, 
\[
\E e^{\lambda \langle g_{j}, e_{k} \rangle} = e^{8 L_{j}^{2} \lambda^{2} \langle \Gamma e_{k}, e_{k} \rangle} = e^{8 L_{j}^{2} \lambda^{2} \|\Gamma^{1/2} e_{k}\|^{2}} = e^{8 L_{j}^{2} \lambda^{2} \gamma_{k}}, 
\]
which implies that $G_{jk} := \langle g_{j}, e_{k} \rangle,j=1,\dots,n$, are independent $N(0, 16 L_{j}^{2} \gamma_{k})$ random variables. Now let $f = (\sqrt{\gamma_{1}} f_{1}^{T}, \sqrt{\gamma_{2}} f_{2}^{T}, \dots)^{T}$, where $f_{k} = (G_{1k}, \dots, G_{nk})^{T}$ for $k=1,2,\dots$. Then $f \sim N(0, \widetilde{\Gamma})$, where $\widetilde{\Gamma} = (\widetilde{\Gamma}_{km})_{k,m=1}^{\infty}$ with $\widetilde{\Gamma}_{km} = \diag(E_{km,11}, \dots, E_{km,nn})$ and $E_{km,jj} = \sqrt{\gamma_{k}\gamma_{m}} \E[G_{jk}G_{jm}]$. Note that 
\begin{align*}
\E[G_{jk}G_{jm}] =& \E[\langle \langle g_{j}, e_{k} \rangle g_{j}, e_{m} \rangle] = \langle (\E \langle g_{j} \otimes g_{j} \rangle) e_{k}, e_{m} \rangle \\
=& 16 L_{j}^{2} \langle \Gamma e_{k}, e_{m} \rangle = 16 L_{j}^{2} \langle \gamma_{k} e_{k}, e_{m} \rangle = 16 L_{j}^{2} \gamma_{k} \vone(k=m). 
\end{align*}
Thus $\widetilde{\Gamma}_{km}$ is an $n \times n$ matrix of all zeros if $k \neq m$, and $\widetilde{\Gamma}_{kk} = 16 \gamma_{k}^{2} \diag(L_{1}^{2}, \dots, L_{n}^{2})$. 

\noindent {\bf Step 4: bound the eigenvalues.} Let $P_{\delta} : \bR^{n} \to \bR^{n}$ be the restriction matrix such that $P_{\delta,ii} = 1$ if $i \in \Lambda_{\delta}$ and $P_{\delta,ij} = 0$ otherwise. Let further $R_{\delta} = \diag(P_{\delta} \widetilde{A} (I_{n}-P_{\delta}), P_{\delta} \widetilde{A} (I_{n}-P_{\delta}), \dots)$ and $Z = (Z_{1}, Z_{2}, \dots)^{T}$, where $\widetilde{A} = (\widetilde{a}_{ij})_{i,j=1}^{n}$ with $\widetilde{a}_{ij} = L_{i} a_{ij}$ and $Z_{i}$ are i.i.d. standard Gaussian random variables in $\bR$. By the rotational invariance of Gaussian distributions, we have 
\[
\tau_{\delta}^{2} = \left\| R_{\delta} f \right\|^{2} \stackrel{d}{=} \left\| R_{\delta} \widetilde{\Gamma}^{1/2} Z \right\|^{2} = Z^{T} \widetilde{\Gamma}^{1/2} R_{\delta}^{T} R_{\delta} \widetilde{\Gamma}^{1/2} Z \stackrel{d}{=} \sum_{k=1}^{\infty} s_{k}^{2} Z_{k}^{2},
\]
where $(s_{k}^{2})_{k=1}^{\infty}$ are the eigenvalues of $\widetilde{\Gamma}^{1/2} R_{\delta}^{T} R_{\delta} \widetilde{\Gamma}^{1/2}$.  So it follows that 
\[
\max_{k} s_{k}^{2} \leq \| R_{\delta} \|_{\op}^{2} \| \widetilde{\Gamma} \|_{\op} \leq \| \widetilde{A} \|_{\op}^{2} \| \widetilde{\Gamma} \|_{\op} \leq L^{2} \| A \|_{\op}^{2} \| \widetilde{\Gamma} \|_{\op}, 
\]
where 
\[
\| \widetilde{\Gamma} \|_{\op} \leq 16 (\max_{1 \leq j \leq n} \|X_{j}\|_{\psi_{2}}^{2}) (\max_{k} \gamma_{k}^{2}) \leq 16 L^{2} \|\Gamma\|_{\op}^{2}.
\]
In addition, we also have 
\begin{align*}
\sum_{k} s_{k}^{2} =& \tr(\widetilde{\Gamma}^{1/2} R_{\delta}^{T} R_{\delta} \widetilde{\Gamma}^{1/2}) = \tr( R_{\delta} \widetilde{\Gamma} R_{\delta}^{T} ) = \sum_{k=1}^{\infty} \tr( [P_{\delta} \widetilde{A} (I_{n}-P_{\delta})] \widetilde{\Gamma}_{kk} [P_{\delta} \widetilde{A} (I_{n}-P_{\delta})]^{T}) \\
\leq& \sum_{k=1}^{\infty} 16 L^{2} \gamma_{k}^{2} \|P_{\delta} \widetilde{A} (I_{n}-P_{\delta})\|_{\HS}^{2} \leq \sum_{k=1}^{\infty} 16 L^{2} \gamma_{k}^{2} \|\widetilde{A}\|_{\HS}^{2} \leq 16 L^{4} \|\Gamma\|_{\HS}^{2} \|A\|_{\HS}^{2}.
\end{align*}
Invoking (\ref{eqn:intermediate_step_HW}), we get 
\[
\E_{X}[ e^{4\lambda S_{\delta}} ] \leq \prod_{k=1}^{\infty} \E_{Z} [ \exp(\lambda^{2} s_{k}^{2} Z_{k}^{2}/2) ].
\]
Since $Z_{k}^{2}$ are i.i.d. $\chi^{2}_{1}$ random variables with the moment generating function $\E[e^{t Z_{k}^{2}}] = (1-2t)^{-1/2}$ for $t < 1/2$, we have 
\[
\E_{X}[ e^{4\lambda S_{\delta}} ] \leq \prod_{k=1}^{\infty} {1 \over \sqrt{1-\lambda^{2} s_{k}^{2}}}, \qquad \text{if } \max_{k} \lambda^{2} s_{k}^{2} < 1.
\]
Using $(1-z)^{-1/2} \leq e^{z}$ for $z \in [0,1/2]$, we get that if $16 L^{4} \|A\|_{\op}^{2} \|\Gamma\|_{\op}^{2} \lambda^{2} < 1$, then 
\[
\E_{X}[ e^{4\lambda S_{\delta}} ] \leq \exp(\lambda^{2} \sum_{k=1}^{\infty} s_{k}^{2}) \leq \exp(16\lambda^{2} L^{4} \|\Gamma\|_{\HS}^{2} \|A\|_{\HS}^{2}).
\]
Note that the last inequality is uniform in $\delta$. Taking expectation with respect to $\delta$, we obtain that 
\[
\E_{X} [e^{\lambda S}] \leq \E_{X,\delta}[e^{4 \lambda S_{\delta}}] \leq \exp( 16\lambda^{2} L^{4} \|\Gamma\|_{\HS}^{2} \|A\|_{\HS}^{2} ),
\]
whenever $0 < \lambda < (4 L^{2} \|A\|_{\op} \|\Gamma\|_{\op})^{-1}$. 

\noindent {\bf Step 5: conclusion.} Now we have 
\[
\Prob(S \geq t) \leq \exp(-\lambda t + 16 \lambda^{2} L^{4} \|\Gamma\|_{\HS}^{2} \|A\|_{\HS}^{2}) \quad \text{for } 0 < \lambda \leq (8 L^{2} \|A\|_{\op} \|\Gamma\|_{\op})^{-1}.
\]
Optimizing in $\lambda$, we deduce that there exists a universal constant $C > 0$ such that 
\[
\Prob(S \geq t) \leq \exp \left[ -C \min \left( {t^{2} \over L^{4} \|\Gamma\|_{\HS}^{2} \|A\|_{\HS}^{2} } , {t \over L^{2} \|\Gamma\|_{\op} \|A\|_{\op} } \right) \right], 
\]
as desired in (\ref{eqn:Hanson-Wright_ineq_Hilbert_space_diagonal_free}).
\end{proof}

\begin{proof}[Proof of Theorem \ref{thm:Hanson-Wright_ineq_Hilbert_space_nonnegative_diagonal}]
Decompose $Q = \sum_{i=1}^{n} a_{ii} \|X_{i}\|^{2} + S$, where $S = \sum_{1 \leq i \neq j \leq n} a_{ij} \langle X_{i}, X_{j} \rangle$. In view of the off-diagonal sum bound for $S$ in Proposition \ref{prop:Hanson-Wright_ineq_Hilbert_space_diagonal_free}, it suffices to show the following inequality for the diagonal sum: for any $t > 0$, 
\begin{equation}
\label{eqn:Hanson-Wright_ineq_Hilbert_space_nonnegative_diagonal_diagonal_sum} 
\begin{split}
& \Prob \left( \sum_{i=1}^{n} a_{ii} \|X_{i}\|^{2} \geq \sum_{i=1}^{n} a_{ii} L_{i}^{2} \|\Gamma\|_{\tr} + t \right) \\
& \qquad \leq \exp \left[ -C \min \left( {t^{2} \over L^{4} \|\Gamma\|_{\HS}^{2} \sum_{i=1}^{n} a_{ii}^{2} }, {t \over L^{2} \|\Gamma\|_{\op} \max_{1 \leq i \leq n} a_{ii}} \right) \right], 
\end{split}
\end{equation}
since $\sum_{i=1}^{n} a_{ii}^{2} \leq \|A\|_{\HS}^{2}$ and $\overline{a} := \max_{1 \leq i \leq n} a_{ii} \leq \|A\|_{\op}$. By Markov's inequality and Lemma \ref{lem:mgf_bound_subgaussian_hilbert_space}, we have for any $\lambda > 0$ and $t > 0$, 
\begin{align*}
& \Prob \left( \sum_{i=1}^{n} a_{ii} (\|X_{i}\|^{2} - L_{i}^{2} \|\Gamma\|_{\tr} ) \geq t \right) \leq e^{-\lambda t} \prod_{i=1}^{n} \E [ e^{\lambda a_{ii} (\|X_{i}\|^{2}-L_{i}^{2} \|\Gamma\|_{\tr})} ] \\
& \qquad \leq e^{-\lambda t} \prod_{i=1}^{n} e^{2 \lambda^{2} a_{ii}^{2} L_{i}^{4} \|\Gamma\|_{\HS}^{2}} \leq \exp \left( -\lambda t + 2 \lambda^{2} (\sum_{i=1}^{n} a_{ii}^{2}) L^{4} \|\Gamma\|_{\HS}^{2} \right) 
\end{align*}
holds for all $0 \leq \lambda < (4 L^{2} \|\Gamma\|_{\op} \overline{a})^{-1}$. Choosing 
\[
\lambda = {t \over 4 (\sum_{i=1}^{n} a_{ii}^{2}) L^{4} \|\Gamma\|_{\HS}^{2}} \wedge {1 \over 8 \overline{a} L^{2} \|\Gamma\|_{\op}}, 
\]
we get (\ref{eqn:Hanson-Wright_ineq_Hilbert_space_nonnegative_diagonal_diagonal_sum}). 
\end{proof}

\begin{proof}[Proof of Theorem \ref{thm:HW_ineq_Hilbert_space}]
Under Assumption \ref{ass:bernstein_squared_norm}, we have the following standard moment generating function bound 
\[
\E \left[ e^{\lambda (\|X_{i}\|^{2} - \E \|X_{i}\|^{2})} \right] \leq e^{C \lambda^{2} \|\Gamma\|_{\HS}^{2} \over 2} \quad \forall |\lambda| < {1 \over 2 \|\Gamma\|_{\op}}. 
\]
See for example Chapter 2 in \cite{Wainwright2019_HDS}. Then we have for any $\lambda > 0$ and $t > 0$, 
\[
\Prob \left( \sum_{i=1}^{n} a_{ii} (\|X_{i}\|^{2} - \E \|X_{i}\|^{2}) \geq  t \right) \leq \exp \left( -\lambda t + C \lambda^{2} (\sum_{i=1}^{n} a_{ii}^{2}) \|\Gamma\|_{\HS}^{2} \right) \quad \forall |\lambda| < {1 \over 2 \overline{a} \|\Gamma\|_{\op}},  
\]
where $\overline{a} := \max_{1 \leq i \leq n} |a_{ii}|$. Note that $\sum_{i=1}^{n} a_{ii}^{2} \leq \|A\|_{\HS}^{2}$ and $\overline{a} \leq \|A\|_{\op}$. Optimizing over $\lambda$ and combining with Proposition \ref{prop:Hanson-Wright_ineq_Hilbert_space_diagonal_free}, we get 
\[
\Prob \left( Q - \E[Q] \geq t \right) \leq 2 \exp\left[ -C \min\left( {t^{2} \over L^{4} \|\Gamma\|_{\HS}^{2} \|A\|_{\HS}^{2}}, {t \over L^{2} \|\Gamma\|_{\op} \|A\|_{\op}} \right)\right].
\]
Applying the same argument by replacing $Q$ with $-Q$, we obtain (\ref{eqn:HW_ineq_Hilbert_space}) with constant 4, which can be reduced to 2 by adjusting the value of constant $C$. 
\end{proof}

\subsection{Proof of main results in Section \ref{sec:kernel_Kmeans}}\label{sec:proof_of_k_means}
In this subsection, we prove Theorem \ref{thm:rate_SDP_kernel_kmeans_general_case} and Corollary~\ref{coro:exact_recovery}.

\begin{proof}[Theorem \ref{thm:rate_SDP_kernel_kmeans_general_case}]
Recall that $\sC = \{Z_{n \times n} : Z^{T} = Z, Z \succeq 0, \tr(Z) = K, Z \vone_{n} = \vone_{n}, Z \geq 0\}$ is the SDP constraint set for the generalized $K$-means in (\ref{eqn:clustering_Kmeans_sdp}). For $i \in G_{k}^{*}$, let $\mu_{k} = \E[X_{i}]$ and $\delta_{i} = X_{i} - \mu_{k}$. For notation simplicity, we will omit in the proof the subscript $\bH$ in the Hilbert space inner product $\langle\cdot,\cdot\rangle_{\bH}$ and norm $\|\cdot\|_{\bH}$.

{\bf Step 1: a generic bound.} For any $Z \in \sC$, consider $\langle A, Z-Z^{*} \rangle = \sum_{i,j=1}^{n} a_{ij} (Z_{ij}-Z_{ij}^{*})$. Note that if $i \in G_{k}^{*}$ and $j \in G_{m}^{*}$, then 
\begin{align*}
a_{ij} =& \langle \mu_{k} + \delta_{i}, \mu_{m} + \delta_{j} \rangle = \langle \mu_{k}, \mu_{m} \rangle + \langle \mu_{k}, \delta_{j} \rangle + \langle \delta_{i}, \mu_{m} \rangle + \langle \delta_{i}, \delta_{j} \rangle \\
=& \langle \mu_{k}, \mu_{m} \rangle + \langle \mu_{k}-\mu_{m}, \delta_{j}-\delta_{i} \rangle + \langle \mu_{k}, \delta_{i} \rangle + \langle \delta_{j}, \mu_{m} \rangle + \langle \delta_{i}, \delta_{j} \rangle \\
=& -{1 \over 2} \|\mu_{k}-\mu_{m}\|^{2} + {1 \over 2} (\|\mu_{k}\|^{2} + \|\mu_{m}\|^{2}) + \langle \mu_{k}-\mu_{m}, \delta_{j}-\delta_{i} \rangle + \langle \mu_{k}, \delta_{i} \rangle + \langle \delta_{j}, \mu_{m} \rangle + \langle \delta_{i}, \delta_{j} \rangle.
\end{align*}
Since $\sum_{j=1}^{n} Z_{ij} = (Z \vone_{n})_{i} = 1$ for all $Z \in \sC$ and $Z^{*}$ is feasible for $\sC$, we have 
\[
\sum_{i,j=1}^{n} \sum_{k,m=1}^{K} \|\mu_{k}\|^{2} \vone(i \in G_{k}^{*}, j \in G_{m}^{*}) (Z_{ij}-Z_{ij}^{*}) = \sum_{i=1}^{n} \sum_{k=1}^{K} \|\mu_{k}\|^{2} \vone(i \in G_{k}^{*}) \sum_{j=1}^{n} (Z_{ij}-Z_{ij}^{*}) = 0
\]
and 
\[
\sum_{i,j=1}^{n} \sum_{k,m=1}^{K} \langle \mu_{k}, \delta_{i} \rangle \vone(i \in G_{k}^{*}, j \in G_{m}^{*}) (Z_{ij}-Z_{ij}^{*}) = \sum_{i=1}^{n} \sum_{k=1}^{K} \langle \mu_{k}, \delta_{i} \rangle \vone(i \in G_{k}^{*}) \sum_{j=1}^{n} (Z_{ij}-Z_{ij}^{*}) = 0.
\]
Then by the symmetry of $Z$ (i.e., $Z^{T}=Z$), we have 
\[
\langle A, Z-Z^{*} \rangle = \langle T_{1}+T_{2}+T_{3}+T_{4}, Z-Z^{*} \rangle,
\]
where for $i \in G_{k}^{*}$ and $j \in G_{m}^{*}$, 
\begin{align*}
T_{1,ij} = -{1 \over 2} \|\mu_{k}-\mu_{m}\|^{2}, & \qquad T_{2,ij} = \langle \mu_{k}-\mu_{m}, \delta_{j}-\delta_{i} \rangle, \\
T_{3,ij} = \langle \delta_{i}, \delta_{j} \rangle - \E \langle \delta_{i}, \delta_{j} \rangle, & \qquad T_{4,ij} = \E \langle \delta_{i}, \delta_{j} \rangle.
\end{align*}
Observe that 
\begin{align}\label{eqn:Lowbound}
\langle T_{1}, Z-Z^{*} \rangle =& -{1 \over 2} \sum_{1 \leq k \neq m \leq K} \|\mu_{k}-\mu_{m}\|^{2} \sum_{i \in G_{k}^{*}, j \in G_{m}^{*}} (Z_{ij}-Z^{*}_{ij}) \\
\label{eqn:Lowbound2}
=& -{1 \over 2} \sum_{1 \leq k \neq m \leq K} \|\mu_{k}-\mu_{m}\|^{2} |Z_{G_{k}^{*}G_{m}^{*}}|_{1},
\end{align}
where the last step follows from $Z \geq 0$ and $Z^{*}_{ij} = 0$ if $i \in G_{k}^{*}, j \in G_{m}^{*}$ for $k \neq m$. Here, $|Z_{G_{k}^{*}G_{m}^{*}}|_{1}=\sum_{i \in G_{k}^{*}, j \in G_{m}^{*}} |Z_{ij}|$. By definition, we have $\langle A, Z^{*} \rangle \leq \langle A, \hat{Z} \rangle$, which implies that $0 \leq \langle A, \hat{Z}-Z^{*} \rangle$. Thus we have 
\begin{equation}
\label{eqn:rate_SDP_kernel_kmeans_general_case_primative_step1.1}
0 \leq {1 \over 2} \sum_{1 \leq k \neq m \leq K} \|\mu_{k}-\mu_{m}\|^{2} |\hat{Z}_{G_{k}^{*}G_{m}^{*}}|_{1}  = \langle T_{1}, Z^{*} - \hat{Z} \rangle \leq \langle T_{2} + T_{3} +T_{4}, \hat{Z}-Z^{*} \rangle.
\end{equation}
Let $\Delta = \min_{1 \leq k \neq m \leq K} \|\mu_{k}-\mu_{m}\|$. By (\ref{eqn:ineq_3_feasible_set}) and (\ref{eqn:ineq_1_feasible_set}) in Lemma \ref{lem:some_ineq_feasible_set}, we have 
\[
|\hat{Z}-Z^{*}|_{1} \leq {2 n \over \underline{n}} |Z^{*}-Z^{*}\hat{Z}|_{1} = {4 n \over \underline{n}} \sum_{1 \leq k \neq m \leq K} |\hat{Z}_{G_{k}^{*}G_{m}^{*}}|_{1},
\]
where $\underline{n} = \min_{1 \leq k \leq K} n_{k}$. Then we get 
\begin{equation}
\label{eqn:rate_SDP_kernel_kmeans_general_case_primative}
|\hat{Z}-Z^{*}|_{1} \leq {8 n \over \Delta^{2} \underline{n}} \langle T_{2}+T_{3}+T_{4}, \hat{Z}-Z^{*} \rangle.
\end{equation}

{\bf Step 2: bound $\langle T_{4}, \hat{Z}-Z^{*} \rangle$.} Since $\delta_{1},\dots,\delta_{n}$ are independent with mean zero, we have 
\[
\langle T_{4}, \hat{Z}-Z^{*} \rangle = \sum_{i=1}^{n} \E \|\delta_{i}\|^{2} (\hat{Z}_{ii}-Z^{*}_{ii}).
\]
Since $\E \|\delta_{i}\|^{2} = \|\E[\delta_{i} \otimes \delta_{i}]\|_{\tr} = \|\Sigma_{k}\|_{\tr}$ if $i \in G_{k}^{*}$, and $\|\Sigma_{k}\|_{\tr}, k = 1,\dots,K$ are all equal, it follows that 
\[
\langle T_{4}, \hat{Z}-Z^{*} \rangle = \|\Sigma_{1}\|_{\tr} \tr(\hat{Z}-Z^{*}) = 0,
\]
where the last step is due to $\tr(\hat{Z}) = \tr(Z^{*}) = K$ since both $\hat{Z}, Z^{*} \in \sC$. 

{\bf Step 3: bound $\langle T_{2}, \hat{Z}-Z^{*} \rangle$.} Consider 
\begin{align*}
\langle T_{2}, \hat{Z}-Z^{*} \rangle =& \sum_{1 \leq k \neq m \leq K} \sum_{i,j=1}^{n} \langle \mu_{k}-\mu_{m}, \delta_{j}-\delta_{i} \rangle \vone(i \in G_{k}^{*}, j \in G_{m}^{*}) (\hat{Z}_{ij}-Z_{ij}^{*}) \\
=& \sum_{1 \leq k \neq m \leq K} \sum_{i \in G_{k}^{*}, j \in G_{m}^{*}} \langle \mu_{k}-\mu_{m}, \delta_{j}-\delta_{i} \rangle \hat{Z}_{ij} \\
=& 2 \sum_{1 \leq k \neq m \leq K} \sum_{i \in G_{k}^{*}, j \in G_{m}^{*}} \langle \mu_{k}-\mu_{m}, \delta_{i} \rangle \hat{Z}_{ij} \\
=& 2 \sum_{1 \leq k \neq m \leq K} \sum_{i \in G_{k}^{*}} \langle \mu_{k}-\mu_{m}, \delta_{i} \rangle |\hat{Z}_{iG_{m}^{*}}|_{1},
\end{align*}
where the third equality is due to symmetry. For each $k \neq m$, let $\epsilon_{i}^{(k,m)} = \langle \mu_{k}-\mu_{m}, \delta_{i} \rangle$ and $s_{k,m} = \sum_{i \in G_{k}^{*}} |\hat{Z}_{iG_{m}^{*}}|_{1}$. Since $|\hat{Z}_{iG_{m}^{*}}|_{1} \leq 1$, by Lemma \ref{lem:ordered_sum_inequality}, 
\[
\sum_{i \in G_{k}^{*}} \langle \mu_{k}-\mu_{m}, \delta_{i} \rangle |\hat{Z}_{iG_{m}^{*}}|_{1} \leq \sum_{i=1}^{s_{k,m}} \epsilon_{(i)}^{(k,m)},
\]
where $\epsilon_{(1)}^{(k,m)} \geq \dots \geq \epsilon_{(n)}^{(k,m)}$ are the order statistics of $\epsilon_{1}^{(k,m)},\ldots,\epsilon_{n}^{(k,m)}$. Note that $(\epsilon_{i}^{(k,m)})_{i=1}^{n}$ are i.i.d. mean-zero sub-gaussian random variables in $\bR$ with respect to $\tau_{k,m}^{2} := L^2\,\langle \Sigma (\mu_{k}-\mu_{m}), \mu_{k}-\mu_{m} \rangle$ (recall that $\Sigma \succeq \Sigma_{k}$ for all $k=1,\dots,K$). Thus for any $s=1,\dots,n$, we have $\sum_{i=1}^{s} \epsilon_{i}^{(k,m)}$ is a mean-zero sub-gaussian random variable with respect to $s \tau_{k,m}^{2}$. By the union bound, we get for all $t > 0$, 
\[
\Prob\left( \sum_{i=1}^{s} \epsilon_{(i)}^{(k,m)} \geq t \right) \leq {n \choose s} \exp\left( -{t^{2} \over 2 s \tau_{k,m}^{2}} \right) \leq \left( {e n \over s} \right)^{s} \exp\left( -{t^{2} \over 2 s \tau_{k,m}^{2}} \right).
\]
Now it follows that 
\begin{align*}
& \Prob \left( \exists 1 \leq k \neq m \leq K \text{ such that } \sum_{i=1}^{s_{k,m}} \epsilon_{(i)}^{(k,m)} \geq C_{1} \tau_{k,m} s_{k,m} \sqrt{\log\left( {n K \over s_{k,m} } \right)} \right) \\
\leq & \sum_{1 \leq k \neq m \leq K} \sum_{1 \leq s \leq n} \Prob\left( \sum_{i=1}^{s} \epsilon_{(i)}^{(k,m)} \geq C_{1} \tau_{k,m} s \sqrt{ \log \left( {n K \over s} \right) } \right) \\
\leq & \sum_{1 \leq k \neq m \leq K} \sum_{s=1}^{n}  \left( {e n \over s} \right)^{s} \exp\left( -{C_{1}^{2} \over 2} s \log \left( {nK \over s} \right) \right) \\
\leq & K^{2} \sum_{s=1}^{n}   \exp \left( -C_{2} s \log \left( {nK \over s} \right) \right) \leq {C_{3} K^{2} \over (nK)^{2} } = {C_{3} \over n^{2}}.
\end{align*}
Thus we have $\Prob(\cG_{1}) \geq 1-C_{3}n^{-2}$, where 
\[
\cG_{1} = \left\{ \sum_{i=1}^{s_{k,m}} \epsilon_{(i)}^{(k,m)} \leq C_{1} \tau_{k,m} s_{k,m} \sqrt{\log \left( {n K \over s_{k,m}} \right)} \quad \forall 1 \leq k \neq m \leq K \right\}. 
\]
By the Cauchy-Schwarz inequality,   
\begin{align*}
\langle T_{2}, \hat{Z}-Z^{*} \rangle \leq& 2C_{1} \sum_{1 \leq k \neq m \leq K} \tau_{k,m} s_{k,m} \sqrt{\log\left( {n K \over s_{k,m}} \right)} & \\
\leq& 2C_{1} \sqrt{\sum_{1 \leq k \neq m \leq K} \tau_{k,m}^{2} s_{k,m}} \sqrt{\sum_{1 \leq k \neq m \leq K} s_{k,m} \log \left( {n K \over s_{k,m}} \right)}
\end{align*}
on the event $\cG_{1}$. Since $s_{k,m} = |\hat{Z}_{G_{k}^{*} G_{m}^{*}}|_{1}$ and 
\[
\tau_{k,m} \leq L\, \|\Sigma^{1/2} (\mu_{k}-\mu_{m})\| \leq L\,\|\Sigma^{1/2}\|_{\op} \|\mu_{k}-\mu_{m}\| =L\, \|\Sigma\|_{\op}^{1/2} \|\mu_{k}-\mu_{m}\|, 
\]
it follows from the first equality in (\ref{eqn:rate_SDP_kernel_kmeans_general_case_primative_step1.1}) that 
\[
\sum_{1 \leq k \neq m \leq K} \tau_{k,m}^{2} s_{k,m} \leq \sum_{1 \leq k \neq m \leq K} L^2\,\|\Sigma\|_{\op} \|\mu_{k}-\mu_{m}\|^{2} |\hat{Z}_{G_{k}^{*} G_{m}^{*}}|_{1} = 2 L^2\,\|\Sigma\|_{\op} \langle T_{1}, Z^{*}-\hat{Z} \rangle.
\]
By (\ref{eqn:ineq_1_feasible_set}) in Lemma \ref{lem:some_ineq_feasible_set}, $S := |Z^{*}(\hat{Z}-Z^{*})|_{1} = 2 \sum_{1 \leq k \neq m \leq K} s_{k,m}$. Then it follows from Jensen's inequality that 
\[
\sum_{1 \leq k \neq m \leq K} s_{k,m} \log \left( {n K \over s_{k,m}} \right) \leq {S \over 2} \log \left( {2 n K^{3} \over S} \right).
\]
Thus we get 
\begin{equation}
\label{eqn:rate_SDP_kernel_kmeans_general_case_term_T2}
\langle T_{2}, \hat{Z}-Z^{*} \rangle \leq 2 C_{1} \,L \,\sqrt{\|\Sigma\|_{\op} \,\langle T_{1}, Z^{*}-\hat{Z} \rangle} \sqrt{S\log\left({2nK^{3} \over S}\right)}
\end{equation}
on the event $\cG_{1}$.

{\bf Step 4: bound $\langle T_{3}, \hat{Z}-Z^{*} \rangle$.} Decompose 
\[
\langle T_{3}, \hat{Z}-Z^{*} \rangle = \langle (I-Z^{*})T_{3}(I-Z^{*}), \hat{Z}-Z^{*} \rangle + \langle Z^{*}T_{3}, \hat{Z}-Z^{*} \rangle + \langle T_{3} Z^{*}, \hat{Z}-Z^{*} \rangle - \langle Z^{*}T_{3}Z^{*}, \hat{Z}-Z^{*} \rangle. 
\]
Note that 
\begin{align*}
\langle (I-Z^{*})T_{3}(I-Z^{*}), \hat{Z}-Z^{*} \rangle =_{(1)}& \langle T_{3}, (I-Z^{*})(\hat{Z}-Z^{*})(I-Z^{*}) \rangle \\
=_{(2)}& \langle T_{3}, (I-Z^{*})\hat{Z}(I-Z^{*}) \rangle \\
\leq_{(3)}& \|T_{3}\|_{\op} \|(I-Z^{*})\hat{Z}(I-Z^{*})\|_{\tr} \\
\leq_{(4)}& \|T_{3}\|_{\op} {|Z^{*}-Z^{*}\hat{Z}|_{1} \over 2\underline{n}}, 
\end{align*}
where $(1)$ follows from the symmetry of $Z^{*}$, $(2)$ from the idempotence of $Z^{*}$ (recall that $Z^{*}$ is a projection matrix such that $Z^{*} Z^{*} = Z^{*}$), $(3)$ from the duality of the operator and trace norms, and $(4)$ from (\ref{eqn:ineq_2_feasible_set}) in Lemma \ref{lem:some_ineq_feasible_set}. Let $\bS^{n-1}$ be the (compact) unit sphere in $\bR^{n}$ and $\cN$ be a $1/4$-net for $\bS^{n-1}$. By Lemma 5.2 and 5.4 in \cite{Vershynin2012_nonasymptotic}, we have $|\cN| \leq 9^{n}$ and $\|T_{3}\|_{\op} \leq 2 \max_{x \in \cN} x^{T} T_{3} x$. Thus, by the union bound, we have for any $t > 0$, 
\begin{equation}
\label{eqn:rate_SDP_kernel_kmeans_general_case_step4_union_bound}
\Prob(\|T_{3}\|_{\op} \geq t) \leq \sum_{x \in \cN} \Prob(x^{T} T_{3} x \geq t/2).
\end{equation}
Fix an $x \in \cN$. Note that $\|x x^{T}\|_{\HS}^{2} = \|x\|_{2}^{4} = 1$ and $\|x x^{T}\|_{\op} \leq 1$. Since $\Sigma \succeq \Sigma_{k}$ for all $k = 1,\dots,K$, we have $\delta_{i} \sim \subg(\Sigma)$ such that $\E[\delta_{i}] = 0$ and $\|\delta_{i}\|_{\psi_{2},\Sigma} \leq L$. By Theorem \ref{thm:HW_ineq_Hilbert_space} with $A = x x^{T}$, we get for all $t > 0$, 
\[
\Prob(x^{T} T_{3} x \geq t/2) = \Prob(\sum_{i,j=1}^{n} x_{i} x_{j} T_{3,ij} \geq t/2) \leq 2 \exp\left[ -C \min\left( {t^{2} \over L^{4} \|\Sigma\|_{\HS}^{2}}, {t \over L^{2} \|\Sigma\|_{\op}} \right)\right].
\]
Combining the last inequality with (\ref{eqn:rate_SDP_kernel_kmeans_general_case_step4_union_bound}), we obtain that with probability at least $1- c n^{-2}$, 
\[
\|T_3\|_{\op} \leq C_{5} L^{2} (\sqrt{n} \|\Sigma\|_{\HS} + n \|\Sigma\|_{\op}). 
\]
Then, 
\begin{align*}
\langle (I-Z^{*})T_{3}(I-Z^{*}), \hat{Z}-Z^{*} \rangle & \leq C_{5} L^{2} {\sqrt{n} \|\Sigma\|_{\HS} + n \|\Sigma\|_{\op} \over 2\underline{n}} |Z^{*}-Z^{*}\hat{Z}|_{1} \\
& \leq_{(1)}  C_{5} {\Delta^{2} \over 2} (c_{0}^{-1} + c_{0}^{-1/2}) |Z^{*} - Z^{*} \hat{Z}|_{1} \\
& =_{(2)}  C_{5} {\Delta^{2} \over 2} (c_{0}^{-1} + c_{0}^{-1/2}) 2 \sum_{1 \leq k \neq m \leq K} |\hat{Z}_{G_{k}^{*}G_{m}^{*}}|_{1} \\
& \leq_{(3)}  2 C_{5} (c_{0}^{-1} + c_{0}^{-1/2}) \langle T_{1}, Z^{*}-\hat{Z} \rangle \\
& \leq_{(4)}  \frac{1}{2}\,\langle T_{1},  Z^{*} -\hat{Z} \rangle,
\end{align*}
where $(1)$ follows from the definition of $\SNR^2$ and the condition that $\SNR^2 \geq  c_0\, n/\underline{n}$, $(2)$ from~\eqref{eqn:ineq_1_feasible_set} in Lemma~\ref{lem:some_ineq_feasible_set}, $(3)$ from the definition of $\Delta^{2}$ and~\eqref{eqn:rate_SDP_kernel_kmeans_general_case_primative_step1.1}, and $(4)$ from choosing $c_0$ sufficiently large.

Next, we consider $\langle Z^{*}T_{3}, \hat{Z}-Z^{*} \rangle = \langle Z^{*}T_{3}, Z^{*} \hat{Z}-Z^{*} \rangle$. By (\ref{eqn:Kmeans_true_membership_matrix}), we have 
\begin{align*}
& \langle Z^{*}T_{3}, Z^{*} \hat{Z}-Z^{*} \rangle = \sum_{i,j=1}^{n} (Z^{*} T_{3})_{ij} (Z^{*} \hat{Z}-Z^{*})_{ij} \\
& \qquad = \sum_{k,m=1}^{K} \sum_{i \in G_{k}^{*}} \sum_{j \in G_{m}^{*}} \left( \sum_{\ell=1}^n Z_{i\ell}^{*} T_{3,\ell j} \right) \left( \sum_{\ell=1}^{n} Z_{i\ell}^{*} \hat{Z}_{\ell j} - Z_{ij}^{*} \right) \\
& \qquad = \sum_{k,m=1}^{K} \sum_{i \in G_{k}^{*}} \sum_{j \in G_{m}^{*}} \left( {1 \over n_{k}} \sum_{\ell \in G_{k}^{*}} T_{3,\ell j} \right) {1 \over n_{k}} \left( \sum_{\ell \in G_{k}^{*}} \hat{Z}_{\ell j} - \vone(k=m)\right) \\
& \qquad = \sum_{k,m=1}^{K} \sum_{j \in G_{m}^{*}} \underbrace{\left( {(-1)^{\vone(k = m)} \over n_{k}} \sum_{\ell \in G_{k}^{*}} T_{3,\ell j} \right)}_{=:B_{kj}} \underbrace{\left| (Z^{*}-Z^{*} \hat{Z})_{G_{k}^{*}\, j} \right|_{1}}_{=:\beta_{kj}}. 
\end{align*}
Note that $\beta_{kj} \in [0,1]$. By Lemma \ref{lem:ordered_sum_inequality}, we have 
\[
\langle Z^{*}T_{3}, Z^{*} \hat{Z}-Z^{*} \rangle \leq \sum_{k,m=1}^{K} \sum_{j=1}^{b_{km}} B^{(k,m)}_{(j)},
\]
where $b_{km} = \sum_{j \in G_{m}^{*}} \beta_{kj} = | (Z^{*}-Z^{*} \hat{Z})_{G_{k}^{*} G_{m}^{*}} |_{1}$ and $B^{(k,m)}_{(1)} \geq B^{(k,m)}_{(2)} \geq \cdots$ is the ordered sequence of $(B_{kj})_{j \in G_{m}^{*}}$. Now fix a $(k,m)$. For any $E \subset G_{m}^{*}$ with $1 \leq q:=|E| \leq n_{m}$ , we can write 
\[
\sum_{j \in E} B_{kj} = \sum_{j,\ell=1}^{n} d^{(k,m)}_{\ell j} \left( \langle \delta_{\ell}, \delta_{j} \rangle - \E \langle \delta_{\ell}, \delta_{j} \rangle \right), 
\]
where $D^{(k,m)} = (d^{(k,m)}_{\ell j})_{\ell,j=1}^{n}$ and $d^{(k,m)}_{\ell j} = - n_{m}^{-1} \vone(j \in E) \vone(\ell \in G_{k}^{*})$. By Theorem \ref{thm:HW_ineq_Hilbert_space} (one-sided version) and the union bound, we have $t > 0$, 
\begin{align*}
\Prob \left( \sum_{j=1}^{q} B^{(k,m)}_{(j)} \geq t \right) \leq {n_{m} \choose q} \exp \left[ -C \min \left( {t^{2} \over L^{4} \|\Sigma\|_{\HS}^{2} \|D^{(k,m)}\|_{\HS}^{2}}, {t \over L^{2} \|\Sigma\|_{\op} \|D^{(k,m)}\|_{\op}} \right) \right].
\end{align*}
Since $\|D^{(k,m)}\|_{\HS} = \|D^{(k,m)}\|_{\op} = \sqrt{q/n_{m}}$, we deduce that 
\begin{align*}
& \Prob \bigg( \exists 1 \leq k,\,m \leq K \text{ such that } \sum_{i=1}^{b_{km}} B^{(k,m)}_{(j)} \geq \\
&\qquad\qquad \qquad\qquad\qquad \qquad \qquad C_{6}\,L^2\bigg(\|\Sigma\|_{\HS}\frac{b_{km} }{\sqrt{n_m}}   \sqrt{\log {n_m K \over b_{km} }} + \|\Sigma\|_{\op}\frac{b_{km} ^{3/2}}{\sqrt{n_m}}\log {n_m K \over b_{km} } \bigg) \bigg)\\
\leq & \sum_{k,m=1}^{K} \sum_{1 \leq q \leq n_m} \Prob\left( \sum_{j=1}^{q} B^{(k,m)}_{(j)} \geq C_{6}\, L^2\bigg(\|\Sigma\|_{\HS}\frac{q }{\sqrt{n_m}}   \sqrt{\log {n_m K \over q }} + \|\Sigma\|_{\op}\frac{q^{3/2}}{\sqrt{n_m}}\log {n_m K \over  q } \bigg)  \right) \\
\leq & \sum_{k,m=1}^{K} \sum_{q=1}^{n_m}  \left( {e n_m \over q} \right)^{q} \exp\left( -{C_{6}^{2}} \,q \log \left( {n_m K \over q} \right) \right) \\
\leq & K^{2} \min_m \,\sum_{q=1}^{n_m}   \exp \left( -C_{7} \,q \log \left( {n_m K \over q} \right) \right) \leq {C_{8} K^{2} \over (\underline{n}K)^{4} } \leq {C_{8} \over {c'_0}^{2} n^{2}},
\end{align*}
where the last inequality is due to $\underline{n}^{2} K \geq c'_0 n$. Thus, we obtain that with probability at least $1-C_{8} /n^{2}$ that
\begin{align*}
\langle Z^{*}T_{3}, \hat{Z}-Z^{*} \rangle \leq C_6\,L^2\sum_{k,m=1}^{K} \bigg(\|\Sigma\|_{\HS}\frac{b_{km} }{\sqrt{n_m}}   \sqrt{\log {n_m K \over b_{km} }} + \|\Sigma\|_{\op}\frac{b_{km} ^{3/2}}{\sqrt{n_m}}\log {n_m K \over b_{km} } \bigg).
\end{align*}
Recall that $\sum_{k,m=1}^K b_{km} = | Z^{*}-Z^{*} \hat{Z}|_1 =S$. Since functions $x^{-1/2}\log x$ and $x^{-1/2}\sqrt{\log x}$ are monotonically decreasing for $x\geq e^2$, we obtain from Jensen's inequality that 
\begin{align*}
\langle Z^{*}T_{3}, \hat{Z}-Z^{*} \rangle \leq C_6\,L^2\,\frac{S}{\sqrt{\underline{n}}}  \,\bigg(\|\Sigma\|_{\HS} \sqrt{\log {\underline{n} K^3 \over S }} + \|\Sigma\|_{\op}\,\sqrt{S}\,\log {\underline{n} K^3 \over S } \bigg).
\end{align*}
By the cyclic invariance of trace and the symmetry of $T_3$ and $\hat Z-Z^\ast$, the same bound holds for $\langle T_{3}Z^{*}, \hat{Z}-Z^{*} \rangle = \langle Z^{*}T_{3}, \hat{Z}-Z^{*} \rangle$.
In addition, the term $\langle Z^{*}T_{3}Z^{*}, \hat{Z}-Z^{*} \rangle=\langle Z^{*}T_{3}, Z^*(\hat{Z}-Z^{*} )Z^*\rangle$ can be handled in the same way
as $\langle Z^{*}T_{3}, \hat{Z}-Z^{*} \rangle$, by noticing that $|Z^*(\hat{Z}-Z^{*} )Z^*|_1 = |Z^*(\hat{Z}-Z^{*} )|_1$ according to Lemma \ref{lem:some_ineq_feasible_set}.

Put all pieces together, we obtain that with probability at least $1-c/n^2$ that
\begin{align*}
\langle T_{3}, \hat{Z}-Z^{*} \rangle \leq
\frac{1}{2}\,\langle T_{1},  Z^{*} -\hat{Z} \rangle+ 3C_6\,L^2 \,S\,\frac{1}{\sqrt{\underline{n}}}  \,\bigg(\|\Sigma\|_{\HS} \sqrt{\log {\underline{n} K^3 \over S }} + \|\Sigma\|_{\op}\,\sqrt{S}\,\log {\underline{n} K^3 \over S } \bigg).
\end{align*}

{\bf Step 5: conclude.} Now we combine the bounds in Step 1 -- 4 to obtain that
\begin{align}
\nonumber
\frac{1}{2}\,\langle T_{1}, Z^{*} -\hat{Z}\rangle \leq&\, 2 C_{1} \,L\,\sqrt{\langle T_{1}, Z^{*}-\hat{Z} \rangle} \sqrt{\|\Sigma\|_{\op} \,S\log\left({2nK^{3} \over S}\right)} \\
\label{eqn:a_quadratic_inequality}
&\,+3C_6\,L^2 \,S\,\frac{1}{\sqrt{\underline{n}}}  \,\bigg(\|\Sigma\|_{\HS} \sqrt{\log {\underline{n} K^3 \over S }} + \|\Sigma\|_{\op}\,\sqrt{S}\,\log {\underline{n} K^3 \over S } \bigg).
\end{align}
holds with probability at least $1-c/n^2$, where recall that $S=| Z^{*}-Z^{*} \hat{Z}|_1$.
According to equation \eqref{eqn:rate_SDP_kernel_kmeans_general_case_primative_step1.1} in Step 1 and equation \eqref{eqn:ineq_3_feasible_set} in Lemma \ref{lem:some_ineq_feasible_set}, we have
$\langle T_{1}, Z^{*} -\hat{Z}\rangle \geq \Delta^2 S/4 \geq 0$. Then solution of the quadratic inequality~\eqref{eqn:a_quadratic_inequality} for $\sqrt{\langle T_{1}, Z^{*} -\hat{Z}\rangle}$ implies
\begin{align}\label{eqn:combining_bound}
\Delta^2 \leq C_9\,L^2\, \|\Sigma\|_{\op} \, \log\left({2nK^{3} \over S}\right) +C_9 \,L^2 \,\frac{1}{\sqrt{\underline{n}}}  \,\bigg(\|\Sigma\|_{\HS} \sqrt{\log {\underline{n} K^3 \over S }} + \|\Sigma\|_{\op}\,\sqrt{S}\,\log {\underline{n} K^3 \over S } \bigg).
\end{align}
This inequality combined with $S \leq |Z^\ast-\hat Z|_1$ due to~\eqref{eqn:ineq_3_feasible_set} and the trivial upper bound $|Z^\ast-\hat Z|_1\leq 2n$ imply
\begin{align*}
\Delta^2 \leq 3C_9\, L^2 \, \|\Sigma\|_{\op} \, \sqrt{\frac{n}{\underline{n}}}\, \log\left({2nK^{3} \over S}\right) + C_9 \,L^2 \,\frac{1}{\sqrt{\underline{n}}}  \,\|\Sigma\|_{\HS} \sqrt{\log {\underline{n} K^3 \over S }}.
\end{align*}
As a consequence, we have
\begin{align*}
S \leq 2nK^3\, \exp \left( -C_{10} \,\bigg(\sqrt{\frac{\underline{n}}{n}}\, \frac{\Delta^2}{L^2\,\|\Sigma\|_{\op}} \wedge \frac{\underline{n}\,\Delta^4}{L^4\,\|\Sigma\|_{\HS}^2}\bigg)\right)\leq 2n K^3 \exp(-C_{11}\,\sqrt{n/\underline{n}}\,) \leq \underline{n},
\end{align*}
where we have used in the second last step our condition that $\SNR^2 \geq  c_0\, n/\underline{n}\geq c_0\, K$ for sufficiently large constant $c_0$.
Now combining the preceding display with inequality \eqref{eqn:combining_bound}, we obtain
\begin{align*}
\Delta^2 \leq 3C_9\, L^2 \, \|\Sigma\|_{\op} \,  \log\left({2nK^{3} \over S}\right) + C_9 \,L^2  \,\frac{1}{\sqrt{\underline{n}}} \,\|\Sigma\|_{\HS} \sqrt{\log {\underline{n} K^3 \over S }}.
\end{align*}
Finally, this inequality combined with equation \eqref{eqn:ineq_3_feasible_set} in Lemma \ref{lem:some_ineq_feasible_set} implies the desired bound
\begin{align*}
|\hat Z - Z^\ast|_1 \leq \frac{2n}{\underline{n}} S \leq C_{12}\, n^2\, K^3/\underline{n}\,\exp(-C_{10} \,\SNR^2) \leq C_{12}\,\exp(-C_{13}\, \SNR^2),
\end{align*}
where the last step is due to the lower bound condition $\SNR^2 \geq  c_0\, n/\underline{n}$.
\end{proof}

\begin{proof}[Proof of Corollary~\ref{coro:exact_recovery}]
For easy presentation, we consider the equal-size clusters case where $n_1=\ldots=n_K=\underline{n}$ and $G_k^\ast=\{(k-1)\underline{n}, (k-1)\underline{n}+1,\ldots,k\underline{n}\}$ for $k=1,\ldots,K$ by reordering the indices. Under this setup, we have 
\begin{equation*}
Z_{ij}^{*} = \left\{
\begin{array}{cc}
1/\underline{n} & \text{if } i, j \in G_{k}^{*} \\
0 & \text{otherwise} \\
\end{array}
\right. .
\end{equation*}
Take $c_1$ large enough so that the upper bound in Theorem~\ref{thm:rate_SDP_kernel_kmeans_general_case} satisfies $C_1 \exp(-C_{2} \SNR^{2}) \leq \frac{1}{3\underline{n}}$.
We use induction to prove that $\hat G_k=G_k^\ast$ at each step for each $k=1,\ldots,K$, which also implies $\hat K=K$. In fact, at $k=1$, since $\max_{i}|\hat Z_{1i} - Z^\ast_{1i}| \leq |\hat Z- Z^\ast|_1 \leq \frac{1}{3\underline{n}}$, we must have $\hat Z_{1i}\in \big[\frac{2}{3\underline{n}},\,\frac{4}{3\underline{n}}\big]$ for $i\in G^\ast_1$ and $\hat Z_{1i}\leq \frac{1}{3\underline{n}}$ for $i\not\in G^\ast_1$ according to the definition of $Z^\ast$. This implies $\hat G_1=G^\ast_1$ according to the choice of $\hat G_1$ in the algorithm. Similarly, assume $\hat G_l=G^\ast_l$ for all $l\leq  k$, then $[n]\setminus \bigcup_{l=1}^k \hat {G}_l=\{k\underline{n}+1,\,k\underline{n}+2,\ldots,n\}$ and $j_{k+1}=k\underline{n}+1$ by definition. Then the fact that $\max_{i}|\hat Z_{j_{k+1}i} - Z^\ast_{j_{k+1}i}| \leq |\hat Z- Z^\ast| \leq \frac{1}{3\underline{n}}$ and the definition of $Z^\ast$ imply $\hat Z_{j_{k+1}i}\in \big[\frac{2}{3\underline{n}},\,\frac{4}{3\underline{n}}\big]$ for $i\in G^\ast_{k+1}$ and $\hat Z_{1i}\leq \frac{1}{3\underline{n}}$ for $i\not\in G^\ast_{k+1}$. Consequently, we must have $\hat G_{k+1} = G^\ast_{k+1}$ according to the choice of $\hat G_{k+1}$ in the algorithm. This completes the proof by induction.
\end{proof}

\appendix

\section{Auxiliary results}
In this section, we collect and prove all auxiliary results in the paper.

\subsection{Feature maps in reproducing kernel Hilbert spaces}\label{sec:feature_map}
In this subsection, we provide a concrete construction of the feature map in kernel clustering.
 To this end, we invoke the theory of reproducing kernel Hilbert space (RKHS). For a detailed survey of linear operators on Hilbert spaces with statistical applications, we refer to the text \cite{HsingEubank2015_Wiley} as an excellent monograph. 

Let the bivariate function $\rho : \bX \times \bX \to \bR$ be a symmetric and positive definite kernel; namely, $\sum_{i,j=1}^{m} c_{i} c_{j} \rho(x_{i},x_{j}) \geq 0$ for all $m \geq 1, x_{1},\dots,x_{m} \in \bX$, and $c_{1},\dots,c_{m} \in \bR$. By the Moore-Aronszajn Theorem (cf. Theorem 2.7.4 in \cite{HsingEubank2015_Wiley}), there exists a unique Hilbert space $\bH := \bH(\rho)$ of real-valued functions on $\bX$ with $\rho$ as its reproducing kernel, i.e., 
\begin{enumeratei}
\item for every $x \in \bX$, $\rho(\cdot,x) \in \bH$;
\item for every $f \in \bH$ and $x \in \bX$, $f(x) = \langle f, \rho(\cdot,x) \rangle$, where $\langle \cdot, \cdot \rangle$ is the inner product of $\bH$. 
\end{enumeratei}
Property (i) defines a feature map $\phi : \bX \to \bH$ via $ x \mapsto \rho(\cdot,x)$, which is known in literature as the RKHS map \cite{Aronszajn1950_TAMS}. Property (ii) shows that $\rho$ satisfies the reproducing kernel property for all functions in the Hilbert space $\bH$. Thus $\bH$ is the RKHS associated with $\rho$. It is immediate from these two properties that 
\[
\rho(x,y)=\langle \rho(\cdot,y), \rho(\cdot,x) \rangle = \langle \phi(x), \phi(y) \rangle \quad \forall x,y \in \bX.
\]
Then the similarity matrix $A $ is chosen $a_{ij} = \rho(X_{i},X_{j}) = \langle \phi(X_{i}), \phi(X_{j}) \rangle$. Statistical properties of the SDP solution $\hat{Z}$ for (\ref{eqn:clustering_Kmeans_sdp}) rely on the distribution of the feature vectors $\phi(X_{i})$ in $\bH$, which is a special case of Theorem~\ref{thm:rate_SDP_kernel_kmeans_general_case}.


\subsection{Auxiliary proofs and lemmas}\label{sec:auxiliary_lemmas}
In this subsection, we provide additional proofs of the technical results used in the paper. 

\begin{proof}[Proof of Lemma \ref{lem:hilbert_space_gaussian_rv}]
Without loss of generality, we may assume $\mu=0$. Suppose that $Z \sim N(0,\Gamma)$. Then $\|Z\|_{\psi_{2}, \Gamma} = 1$ is obvious from Definition \ref{defin:subgaussian_Hilbert_space} and \ref{defin:gaussian_Hilbert_space}. Let $M(t) = \E[e^{t \langle z, Z \rangle}], t \in \bR,$ be the moment generating function of $\langle z, Z \rangle$. Then Taylor's expansion yields that 
\[
\left. {d^{2} M(t) \over d t^{2}} \right|_{t=0} = \E \langle z, Z \rangle^{2} = \E \langle z, \langle z, Z \rangle Z \rangle = \E \langle z, (Z \otimes Z) z \rangle = \langle z, \E(Z \otimes Z) z \rangle = \langle z, \Sigma z \rangle.
\]
On the other hand, since $Z \sim N(0, \Gamma)$, we have 
\[
{d^{2} M(t) \over d t^{2}} = (1+t^{2}) \langle \Gamma z, z \rangle e^{t^{2} \langle \Gamma z, z \rangle / 2}.
\]
Thus it follows that 
\[
\langle (\Sigma-\Gamma) z, z \rangle = 0 \quad \text{for all } z \in \bH,
\]
which implies that $\Sigma = \Gamma$. Suppose that $Z \sim \subg(\Gamma)$. By Markov's inequality and Definition \ref{defin:subgaussian_Hilbert_space}, we have  
\[
\Prob(\langle z, Z \rangle \geq t) \leq \inf_{\lambda > 0} e^{-\lambda t} \E[e^{\lambda \langle z, Z \rangle}] \leq \inf_{\lambda > 0} e^{-\lambda t + {\alpha^{2} \lambda^{2} \over 2} \langle \Gamma z, z \rangle} = e^{-{t^{2} \over 2 \alpha^{2} \langle \Gamma z, z \rangle}}, 
\]
where $\alpha^{2} = \|Z\|_{\psi_{2}. \Gamma}^{2}$. Then, 
\[
\langle \Sigma z, z \rangle = \E \langle z, Z \rangle^{2} = \int_{0}^{\infty} \Prob( |\langle z, Z \rangle| \geq \sqrt{t}) dt \leq 2 \int_{0}^{\infty} e^{-{t \over 2 \alpha^{2} \langle \Gamma z, z \rangle}} dt = 4 \alpha^{2} \langle \Gamma z, z \rangle.
\]
Thus it is immediate that $\langle (4 \alpha^{2} \Gamma - \Sigma) z, z \rangle \geq 0$ for all $z \in \bH$, i.e., $\Sigma \preceq 4 \|Z\|_{\psi_{2}, \Gamma}^{2} \Gamma$. 
\end{proof}

\begin{lem}[Moment generating function bound for squared norm of a sub-gaussian random variable in $\R^{n}$]
\label{lem:mgf_bound_subgaussian}
Let $\Gamma$ be an $n \times n$ positive semidefinite matrix and $X$ be a random variable in $\R^{n}$ such that $\E[X] = 0$ and $\E[e^{z^{T} X}] \leq e^{z^{T} \Gamma z / 2}$ for all $z \in \R^{n}$. Let $Z \sim N(0, \Gamma)$. Then, 
\[
\E \left[ e^{t \|X\|_{2}^{2} \over 2} \right] \leq \E \left[ e^{t \|Z\|_{2}^{2} \over 2} \right] \quad \forall \; 0 \leq t < \|\Gamma\|_{\op}^{-1}. 
\]
\end{lem}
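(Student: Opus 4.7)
The plan is to use a Gaussian linearization trick to reduce the squared norm to a linear form, then exploit the sub-gaussian assumption and reverse the identity for the Gaussian comparator.

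First, introduce an auxiliary standard Gaussian vector $g \sim N(0, I_n)$ independent of $X$. The Gaussian moment generating function identity gives, for any fixed $x \in \mathbb{R}^n$ and $t \geq 0$,
\[
\mathbb{E}_g\bigl[ e^{\sqrt{t}\, g^T x} \bigr] = e^{t \|x\|_2^2 / 2}.
\]
Substituting $x = X$ and taking $\mathbb{E}_X$ on both sides, then exchanging the order of integration via Tonelli (the integrand is nonnegative), I obtain
\[
\mathbb{E}\bigl[ e^{t \|X\|_2^2 / 2} \bigr] = \mathbb{E}_g\, \mathbb{E}_X\bigl[ e^{\sqrt{t}\, g^T X} \bigr].
\]

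Next, conditionally on $g$, I apply the sub-gaussian hypothesis with $z = \sqrt{t}\, g$ to obtain $\mathbb{E}_X[e^{\sqrt{t}\, g^T X}] \leq e^{t\, g^T \Gamma g / 2}$. Hence
\[
\mathbb{E}\bigl[ e^{t \|X\|_2^2 / 2} \bigr] \leq \mathbb{E}_g\bigl[ e^{t\, g^T \Gamma g / 2} \bigr].
\]
Finally, I recognize the right-hand side as the mgf of $\|Z\|_2^2$ for $Z \sim N(0,\Gamma)$, by running the same Gaussian decoupling in reverse: applying the identity with $X$ replaced by $Z$ gives $\mathbb{E}[e^{t\|Z\|_2^2/2}] = \mathbb{E}_g\, \mathbb{E}_Z[e^{\sqrt{t}\, g^T Z}]$, and since $g^T Z \mid g \sim N(0, g^T \Gamma g)$, the inner expectation equals $e^{t g^T \Gamma g/2}$ exactly. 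Combining yields the desired bound $\mathbb{E}[e^{t\|X\|_2^2/2}] \leq \mathbb{E}[e^{t\|Z\|_2^2/2}]$.

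The only subtlety is ensuring that all expectations involved are finite so that the comparison is meaningful. For $0 \leq t < \|\Gamma\|_{\op}^{-1}$, diagonalizing $\Gamma = U^T \mathrm{diag}(\lambda_i) U$ shows $\mathbb{E}_g[e^{t g^T \Gamma g/2}] = \prod_{i=1}^n (1 - t \lambda_i)^{-1/2} < \infty$, which is exactly the regime stipulated in the lemma; the left-hand side is then automatically finite by the inequality just established. No harder obstacle arises, since the Gaussian decoupling trick perfectly matches the shape of the sub-gaussian mgf assumption.
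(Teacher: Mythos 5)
Your proof is correct and is essentially the paper's own argument in probabilistic dress: integrating the sub-gaussian mgf bound against an auxiliary Gaussian $z=\sqrt{t}\,g$ is exactly the paper's step of averaging $\E[e^{z^TX}]$ against the $N(0,tI_n)$ density and completing the square, and your exact treatment of the Gaussian comparator replaces the paper's determinant computations. A minor bonus of your phrasing is that it never needs $\Gamma$ to be invertible, so the paper's preliminary reduction to strictly positive definite $\Gamma$ (via $\Gamma+\delta I_n$, $\delta\to 0$) is unnecessary.
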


\begin{proof}[Proof of Lemma \ref{lem:mgf_bound_subgaussian}]
The case for $t = 0$ is obvious. Without loss of generality, we may assume $\Gamma$ is (strictly) positive definite since otherwise we can consider $\Gamma + \delta I_{n}$ for $\delta > 0$ and then let $\delta \to 0$. Consider $t \in (0, \|\Gamma\|_{\op}^{-1})$. Denote the determinant of $\Gamma$ as $|\Gamma|$. Observe that 
\begin{align*}
A :=& {1 \over (2\pi)^{n/2} |\Gamma|^{1/2}} \int_{\bR^{n}} e^{-{\|z\|_{2}^{2} \over 2t}} \E [e^{z^{T} X}] dz \\
=_{(1)}& \E \left[{1 \over (2\pi)^{n/2} |\Gamma|^{1/2}} \int_{\bR^{n}} e^{-{\|z-tX\|_{2}^{2} \over 2t}} dz \; e^{t\|X\|_{2}^{2} \over 2} \right] \\
=_{(2)}& \E \left[ e^{t\|X\|_{2}^{2} \over 2} \right] {1 \over (2\pi)^{n/2} |\Gamma|^{1/2}} \int_{\bR^{n}} e^{-{\|z\|_{2}^{2} \over 2t}} dz \\
=_{(3)}& \E \left[ e^{t\|X\|_{2}^{2} \over 2} \right] {1 \over |t^{-1} \Gamma|^{1/2}}, 
\end{align*}
where $(1)$ follows from Fubini's theorem, $(2)$ from the translational invariance of the Gaussian density integral, and $(3)$ from that the integration of the standard Gaussian distribution $N(0,I_{n})$ equals to one. Thus we get 
\[
\E \left[ e^{t\|X\|_{2}^{2} \over 2} \right] = |t^{-1} \Gamma|^{1/2} A. 
\]
Since $\E[e^{z^{T} X}] \leq e^{z^{T} \Gamma z / 2}$ for all $z \in \R^{n}$, we have for $t \in (0, \|\Gamma\|_{\op}^{-1})$, 
\begin{align*}
A \leq& {1 \over (2\pi)^{n/2} |\Gamma|^{1/2}} \int_{\bR^{n}} e^{-{z^{T} z \over 2t}} e^{z^{T} \Gamma z \over 2} dz \\
=& {1 \over (2\pi)^{n/2} |\Gamma|^{1/2}} \int_{\bR^{n}} e^{-{1 \over 2} z^{T} (t^{-1}I_{n} - \Gamma) z} dz \\
=& {1 \over |\Gamma|^{1/2} |t^{-1}I_{n} - \Gamma|^{1/2}} \left[ {1 \over  (2\pi)^{n/2} |(t^{-1}I_{n} - \Gamma)^{-1}|^{1/2}} \int_{\bR^{n}} e^{-{1  \over 2}z^{T} (t^{-1}I_{n} - \Gamma) z} dz \right] \\
=& {1 \over |\Gamma|^{1/2} |t^{-1}I_{n} - \Gamma|^{1/2}}.
\end{align*}
Then we have 
\[
\E \left[ e^{t\|X\|_{2}^{2} \over 2} \right] \leq {|t^{-1} \Gamma|^{1/2} \over |\Gamma|^{1/2} |t^{-1}I_{n} - \Gamma|^{1/2}} = {1 \over |I_{n} - t \Gamma|^{1/2}}  \quad \forall \; 0 \leq t < \|\Gamma\|_{\op}^{-1}. 
\]
On the other hand, for $Z \sim N(0,\Gamma)$, similar calculations show that  
\begin{align*}
\E \left[ e^{s\|Z\|_{2}^{2} \over 2} \right] =& {1 \over (2\pi)^{n/2} |\Gamma|^{1/2}} \int_{\bR^{n}} e^{-{1  \over 2} z^{T} \Gamma^{-1} z} e^{{s \over 2} z^{T} z} dz \\
=& {1 \over (2\pi)^{n/2} |\Gamma|^{1/2}} \int_{\bR^{n}} e^{-{1 \over 2} z^{T} (\Gamma^{-1} -s I_{n}) z} dz \\
=& {|\Gamma^{-1}(I_{n} - s\Gamma)|^{-1/2} \over |\Gamma|^{1/2}} = {1 \over |I_{n} - s\Gamma|^{1/2}} \quad \forall \; s < \|\Gamma\|_{\op}^{-1}, 
\end{align*}
from which Lemma \ref{lem:mgf_bound_subgaussian} is immediate.  
\end{proof}

\begin{lem}[Upper bound for squared norm of a sub-gaussian random variable in $\R^{n}$]
\label{lem:mgf_bound_subgaussian_centered}
In the setting of Lemma \ref{lem:mgf_bound_subgaussian}, we have 
\begin{equation}
\label{eqn:mgf_bound_subgaussian_centered}
\E \left[ e^{{t \over 2} (\|X\|_{2}^{2}-\tr(\Gamma))} \right] \leq e^{{t^{2} \over 2} \|\Gamma\|_{\HS}^{2}} \quad \forall \; 0 \leq t  < (2 \|\Gamma\|_{\op})^{-1}. 
\end{equation}
Consequently, we have for any $u > 0$, 
\begin{equation}
\label{eqn:Bernstein_bound_subgaussian_centered}
\Prob \left( \|X\|_{2}^{2} - \tr(\Gamma) \geq u \right) \leq \exp \left[ -{1 \over 8} \min \left( {u^{2} \over \|\Gamma\|_{\HS}^{2}}, {u \over \|\Gamma\|_{\op}} \right) \right].
\end{equation}
\end{lem}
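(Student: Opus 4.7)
The plan is to combine the multiplicative bound from Lemma~\ref{lem:mgf_bound_subgaussian} with a diagonalization of $\Gamma$, then reduce the MGF estimate to an elementary scalar inequality, and finally extract the tail bound by standard Bernstein-type optimization.

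First, I would write $\E[e^{(t/2)(\|X\|_{2}^{2}-\tr(\Gamma))}] \leq e^{-t\tr(\Gamma)/2}|I_{n}-t\Gamma|^{-1/2}$, using Lemma~\ref{lem:mgf_bound_subgaussian} directly. Diagonalizing $\Gamma$ with eigenvalues $\lambda_{1},\dots,\lambda_{n} \in [0,\|\Gamma\|_{\op}]$ turns the claim~\eqref{eqn:mgf_bound_subgaussian_centered} into the scalar inequality
\[
\sum_{i=1}^{n}\Bigl(-\tfrac{1}{2}\log(1-t\lambda_{i}) - \tfrac{t\lambda_{i}}{2}\Bigr) \leq \tfrac{t^{2}}{2}\sum_{i=1}^{n}\lambda_{i}^{2}.
\]
So it suffices to show $-\log(1-x) - x \leq x^{2}$ for all $x\in[0,\tfrac{1}{2}]$, which is an elementary one-variable inequality: both sides vanish at $x=0$, and a term-by-term comparison of their derivatives gives $x/(1-x) \leq 2x$ on $[0,\tfrac{1}{2}]$ (alternatively, via the Taylor series $-\log(1-x)-x = \sum_{k\geq 2}x^{k}/k$, one bounds the tail geometrically by $x^{2}\sum_{k\geq 0}(x/2)^{k} \leq x^{2}$). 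Applying this with $x = t\lambda_{i}$, which lies in $[0,\tfrac{1}{2}]$ thanks to the constraint $t<(2\|\Gamma\|_{\op})^{-1}$, and summing over $i$ completes the MGF bound.

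For the tail inequality~\eqref{eqn:Bernstein_bound_subgaussian_centered}, I would apply Markov's inequality: for $0\leq t<(2\|\Gamma\|_{\op})^{-1}$,
\[
\Prob\bigl(\|X\|_{2}^{2}-\tr(\Gamma)\geq u\bigr) \leq \exp\bigl(-tu/2 + t^{2}\|\Gamma\|_{\HS}^{2}/2\bigr).
\]
Optimizing over $t$: the unconstrained minimizer is $t^{*} = u/(2\|\Gamma\|_{\HS}^{2})$, giving an exponent $-u^{2}/(8\|\Gamma\|_{\HS}^{2})$ whenever $t^{*} < (2\|\Gamma\|_{\op})^{-1}$, i.e., when $u<\|\Gamma\|_{\HS}^{2}/\|\Gamma\|_{\op}$. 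In the complementary regime, one chooses $t = (2\|\Gamma\|_{\op})^{-1}$ at the boundary and checks that the exponent is at most $-u/(8\|\Gamma\|_{\op})$, again using $u\geq \|\Gamma\|_{\HS}^{2}/\|\Gamma\|_{\op}$ to absorb the quadratic term. Taking the worse of the two gives the claimed $\exp[-\tfrac{1}{8}\min(u^{2}/\|\Gamma\|_{\HS}^{2},\,u/\|\Gamma\|_{\op})]$.

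There is no real obstacle here: once Lemma~\ref{lem:mgf_bound_subgaussian} is available, the entire argument reduces to a deterministic calculation with the spectrum of $\Gamma$. The only delicate point is keeping track of the constants in the Bernstein optimization so that the single universal factor $1/8$ works in both regimes, which is a routine check.
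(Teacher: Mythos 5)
Your proof is correct and follows essentially the same route as the paper: compare $\E[e^{t\|X\|_2^2/2}]$ with the Gaussian MGF via Lemma~\ref{lem:mgf_bound_subgaussian}, diagonalize $\Gamma$ and reduce to the scalar inequality $-\log(1-x)-x\leq x^{2}$ on $[0,1/2]$ (which is exactly the paper's bound $e^{-t}/\sqrt{1-2t}\leq e^{2t^{2}}$ for $|t|<1/4$ in disguise), then apply Markov and the standard Bernstein-type optimization in $t$. One cosmetic remark: the parenthetical geometric-series bound $x^{2}\sum_{k\geq 0}(x/2)^{k}\leq x^{2}$ is false as written (the left side equals $x^{2}/(1-x/2)>x^{2}$), but your derivative comparison $x/(1-x)\leq 2x$ on $[0,1/2]$ already proves the needed inequality, so the argument stands.
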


\begin{proof}[Proof of Lemma \ref{lem:mgf_bound_subgaussian_centered}]
Let $Z \sim N(0,\Gamma)$. By the calculations in Lemma \ref{lem:mgf_bound_subgaussian}, we have for all $t < \|\Gamma\|_{\op}^{-1}$, 
\[
\E \left[ e^{{t \over 2} (\|Z\|_{2}^{2}-\tr(\Gamma))} \right] = {e^{-{t \over 2} \tr(\Gamma)} \over |I_{n}-t \Gamma|^{1/2}} = \prod_{i=1}^{n} {e^{-t \gamma_{i} / 2} \over \sqrt{1-t \gamma_{i}}},
\]
where $(\gamma_{i})_{i=1}^{n}$ are eigenvalues of $\Gamma$. Using the inequality 
\[
{e^{-t} \over \sqrt{1-2t}} \leq e^{2 t^{2}} \quad \forall |t| < 1/4,
\]
we have 
\[
\E \left[ e^{{t \over 2} (\|Z\|_{2}^{2}-\tr(\Gamma))} \right] \leq \prod_{i=1}^{n} e^{t^{2} \gamma_{i}^{2} \over 2} = e^{t^{2} \|\Gamma\|_{\HS}^{2} \over 2} \quad \forall |t| < (2\|\Gamma\|_{\op})^{-1}.
\]
Combining the last inequality with Lemma \ref{lem:mgf_bound_subgaussian}, we get (\ref{eqn:mgf_bound_subgaussian_centered}). By Markov's inequality, we have for any $u > 0$ and $0 \leq t  < (2 \|\Gamma\|_{\op})^{-1}$, 
\[
\Prob \left( \|X\|_{2}^{2} - \tr(\Gamma) \geq u \right) \leq e^{-{tu \over 2} + {t^{2} \over 2} \|\Gamma\|_{\HS}^{2}}.
\]
Choosing $t = t^{*} := {u \over 2 \|\Gamma\|_{\HS}^{2}} \wedge {1 \over 2 \|\Gamma\|_{\op}}$, we get 
\[
\Prob \left( \|X\|_{2}^{2} - \tr(\Gamma) \geq u \right) \leq \exp \left(-{ut^{*} \over 4} \right) = \exp \left[ -{1 \over 8} \min \left( {u^{2} \over \|\Gamma\|_{\HS}^{2}}, {u \over \|\Gamma\|_{\op}} \right) \right].
\]
\end{proof}

\begin{lem}[Moment generating function bound for centered squared norm of a sub-gaussian random variable in $\bH$]
\label{lem:mgf_bound_subgaussian_hilbert_space}
Let $\Gamma \in \cB(\bH)$ be a positive definite trace class operator on $\bH$. Let $X$ be a centered sub-gaussian random variable in $\bH$ with respect to $\Gamma$ and $L = \|X\|_{\psi_{2}}$. Then, 
\[
\E \left[ e^{{t \over 2} (\|X\|^{2} - L^{2} \|\Gamma\|_{\tr})} \right] \leq e^{{t^{2} L^4 \over 2} \|\Gamma\|_{\HS}^{2}} \quad \forall \;  0 \leq t < {1 \over 2 L^{2} \|\Gamma\|_{\op}}. 
\]
\end{lem}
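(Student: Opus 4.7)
The plan is to reduce the Hilbert-space statement to the finite-dimensional bound already established in Lemma~\ref{lem:mgf_bound_subgaussian_centered} by projecting $X$ onto the eigenbasis of $\Gamma$ and then passing to the limit. Since $\Gamma$ is positive definite trace class, Theorem~3.4.3 and Theorem~4.2.4 in \cite{HsingEubank2015_Wiley} yield an eigendecomposition $\Gamma = \sum_{k=1}^{\infty} \gamma_{k} (e_{k} \otimes e_{k})$, where $(e_{k})$ is a CONS of $\overline{\Image(\Gamma)}$ and $\gamma_{k} \geq 0$ are the corresponding eigenvalues. For each integer $N \geq 1$, set $X_{N} = (\langle X, e_{1}\rangle, \ldots, \langle X, e_{N}\rangle) \in \bR^{N}$ and $\Gamma_{N} = \diag(\gamma_{1}, \ldots, \gamma_{N})$. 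Parseval's identity gives $\|X_{N}\|_{2}^{2} = \sum_{k=1}^{N} \langle X, e_{k}\rangle^{2} \nearrow \|X\|^{2}$ as $N \to \infty$, and $\tr(\Gamma_{N}) = \sum_{k=1}^{N} \gamma_{k} \nearrow \|\Gamma\|_{\tr}$.

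Next I would verify that $X_{N}$ is sub-gaussian in $\bR^{N}$ with parameter $L^{2} \Gamma_{N}$. Given $z = (z_{1}, \ldots, z_{N}) \in \bR^{N}$, set $\tilde{z} = \sum_{k=1}^{N} z_{k} e_{k} \in \bH$. Then $z^{T} X_{N} = \langle \tilde{z}, X\rangle$, so by Definition~\ref{defin:subgaussian_Hilbert_space} with $\|X\|_{\psi_{2},\Gamma} = L$,
\[
\E\bigl[e^{z^{T} X_{N}}\bigr] = \E\bigl[e^{\langle \tilde{z}, X\rangle}\bigr] \leq e^{L^{2} \langle \Gamma \tilde{z}, \tilde{z}\rangle / 2} = \exp\Bigl(\tfrac{1}{2}\, z^{T} (L^{2} \Gamma_{N}) z\Bigr),
\]
since $\langle \Gamma \tilde{z}, \tilde{z}\rangle = \sum_{k=1}^{N} \gamma_{k} z_{k}^{2}$. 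Thus $X_{N}$ satisfies the hypothesis of Lemma~\ref{lem:mgf_bound_subgaussian_centered} with covariance surrogate $L^{2} \Gamma_{N}$. Applying inequality~\eqref{eqn:mgf_bound_subgaussian_centered} in that lemma yields, for every $0 \leq t < (2 L^{2} \|\Gamma_{N}\|_{\op})^{-1}$,
\[
\E\Bigl[ e^{\frac{t}{2} (\|X_{N}\|_{2}^{2} - L^{2} \tr(\Gamma_{N}))}\Bigr] \leq \exp\Bigl(\tfrac{t^{2}}{2}\, \|L^{2} \Gamma_{N}\|_{\HS}^{2}\Bigr) = \exp\Bigl(\tfrac{t^{2} L^{4}}{2} \|\Gamma_{N}\|_{\HS}^{2}\Bigr).
\]
Since $\|\Gamma_{N}\|_{\op} \leq \|\Gamma\|_{\op}$ and $\|\Gamma_{N}\|_{\HS} \leq \|\Gamma\|_{\HS}$, rearranging gives
\[
\E\bigl[ e^{\frac{t}{2}\|X_{N}\|_{2}^{2}}\bigr] \leq \exp\Bigl(\tfrac{t L^{2}}{2} \tr(\Gamma_{N}) + \tfrac{t^{2} L^{4}}{2} \|\Gamma\|_{\HS}^{2}\Bigr) \leq \exp\Bigl(\tfrac{t L^{2}}{2} \|\Gamma\|_{\tr} + \tfrac{t^{2} L^{4}}{2} \|\Gamma\|_{\HS}^{2}\Bigr)
\]
for every $0 \leq t < (2 L^{2} \|\Gamma\|_{\op})^{-1}$.

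The last step is the limit passage. Because $\|X_{N}\|_{2}^{2} \nearrow \|X\|^{2}$ monotonically and $t \geq 0$, the integrands $e^{\frac{t}{2}\|X_{N}\|_{2}^{2}}$ are nonnegative and monotonically nondecreasing in $N$, so the monotone convergence theorem gives $\E[e^{\frac{t}{2}\|X\|^{2}}] = \lim_{N \to \infty} \E[e^{\frac{t}{2}\|X_{N}\|_{2}^{2}}]$. Combining this with the uniform upper bound above and dividing by $e^{\frac{t L^{2}}{2}\|\Gamma\|_{\tr}}$ (note $\|\Gamma\|_{\tr} < \infty$ since $\Gamma$ is trace class, so this factor is finite) yields the claimed inequality on the stated range of $t$.

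The main technical point to watch is ensuring that the reduction really captures all of $\|X\|^{2}$: because $(e_{k})$ is only a CONS of $\overline{\Image(\Gamma)}$ rather than of the whole $\bH$, one must either check that $X$ lives in $\overline{\Image(\Gamma)}$ almost surely (which follows from the sub-gaussian condition forcing $\langle z, X\rangle = 0$ a.s.\ whenever $\langle \Gamma z, z\rangle = 0$, by Chebyshev applied to the characteristic function) or, equivalently, extend $(e_{k})$ to a full CONS on $\bH$ with the remaining eigenvalues equal to zero and argue that the extra coordinates contribute nothing to the variance. This is the only place where the argument is subtler than a rote application of the finite-dimensional lemma; everything else is bookkeeping.
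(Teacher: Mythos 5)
Your proposal is correct and takes essentially the same route as the paper: both reduce the claim to the finite-dimensional bound of Lemma~\ref{lem:mgf_bound_subgaussian_centered} by projecting $X$ onto the first $N$ elements of a CONS, verifying $X_N\sim\subg(L^2\Gamma_N)$, using $\|\Gamma_N\|_{\op}\leq\|\Gamma\|_{\op}$, $\|\Gamma_N\|_{\HS}\leq\|\Gamma\|_{\HS}$, and passing to the limit by monotone convergence. The only difference is that the paper works with an arbitrary CONS of all of $\bH$ (so $\Gamma_K$ is a possibly non-diagonal Gram matrix, which the finite-dimensional lemma handles anyway), thereby avoiding the $\overline{\Image(\Gamma)}$-versus-$\bH$ subtlety that your eigenbasis choice creates and that you correctly identify and resolve.
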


\begin{proof}[Proof of Lemma \ref{lem:mgf_bound_subgaussian_hilbert_space}]
The proof is a standard approximation argument combined with Lemma \ref{lem:mgf_bound_subgaussian_centered}. Let $(e_{k})_{k=1}^{\infty}$ be a CONS of $\bH$. By Parseval's identity, $\|X\|^{2} = \sum_{k=1}^{\infty} \langle X, e_{k} \rangle^{2}$, where convergence of the sum is made in the $\ell^{2}$ sense. Let $K > 0$ be a finite integer. Put $X_{K} = (\langle X, e_{1} \rangle,\dots,\langle X, e_{K} \rangle)^{T}$. Then $X_{K} \sim \subg(L^{2} \Gamma_{K})$ is a mean-zero random variable in $\bR^{n}$ with $\Gamma_{K,jk} = \langle \Gamma e_{j}, e_{k}\rangle$ for $j,k=1,\dots,K$. Since $\|\Gamma_{K}\|_{\op} \leq \|\Gamma\|_{\op}$, it follows from Lemma \ref{lem:mgf_bound_subgaussian_centered} that 
\[
\E \left[ e^{{t \over 2} (\|X_{K}\|^{2} - L^{2} \|\Gamma_{K}\|_{\tr})} \right] \leq e^{{t^{2}L^4 \over 2} \|\Gamma_{K}\|_{\HS}^{2}} \quad \forall \; 0 \leq t < {1 \over L^{2} \|\Gamma\|_{\op}}. 
\]
Letting $K \to \infty$, we have $\|X_{K}\|_{2}^{2} \nearrow \|X\|^{2}$, $\tr(\Gamma_{K}) = \|\Gamma_{K}\|_{\tr} \nearrow \|\Gamma\|_{\tr}$, and $\|\Gamma_{K}\|_{\HS}^{2} \nearrow \|\Gamma\|_{\HS}^{2}$. Then Lemma \ref{lem:mgf_bound_subgaussian_hilbert_space} follows from the monotone convergence theorem. 
\end{proof}

\begin{lem}[Squared norm of a sub-gaussian random variable in $\bH$ is sub-exponential]
\label{lem:squared_norm_is_subexp}
Let $\Gamma \in \cB(\bH)$ be a positive definite trace class operator on $\bH$ and $X$ be a centered $\subg(\Gamma)$ random variable in $\bH$. Then there exists a universal constant $C > 0$ such that 
\[
\left\| \|X\|^{2} \right\|_{\psi_{1}} \leq C \|X\|_{\psi_{2}}^{2} \|\Gamma\|_{\tr}. 
\]
Thus $\|X\|^{2}$ is a sub-exponential random variable in $\bR$. 
\end{lem}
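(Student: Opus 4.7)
The plan is to extract the sub-exponential bound on $\|X\|^2$ directly from the moment generating function estimate in Lemma \ref{lem:mgf_bound_subgaussian_hilbert_space}, using the standard characterization that $\|Y\|_{\psi_1}\leq K$ whenever $\E[\exp(Y/K)]\leq 2$ for a non-negative random variable $Y$. So the task reduces to locating a single scale $t \sim (L^2 \|\Gamma\|_{\tr})^{-1}$ at which the MGF of $\|X\|^{2}$ is at most $2$.

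First I would rewrite the bound from Lemma \ref{lem:mgf_bound_subgaussian_hilbert_space} by moving the deterministic shift to the right-hand side, obtaining
\[
\E\bigl[e^{(t/2)\|X\|^{2}}\bigr] \;\leq\; \exp\Bigl(\tfrac{t L^{2}}{2}\|\Gamma\|_{\tr} \,+\, \tfrac{t^{2} L^{4}}{2}\|\Gamma\|_{\HS}^{2}\Bigr)
\]
valid for $0 \leq t < (2 L^{2}\|\Gamma\|_{\op})^{-1}$. I would then plug in $t = c\,(L^{2}\|\Gamma\|_{\tr})^{-1}$ for an absolute constant $c \in (0,1/2]$ to be chosen below. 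Admissibility requires $c \leq \|\Gamma\|_{\tr}/(2\|\Gamma\|_{\op})$, which is automatic from the elementary chain $\|\Gamma\|_{\op} \leq \|\Gamma\|_{\HS} \leq \|\Gamma\|_{\tr}$ for positive definite trace-class operators; indeed, writing the eigendecomposition $\Gamma = \sum_{k}\gamma_{k}(e_{k}\otimes e_{k})$ with $\gamma_{k}\geq 0$ immediately yields $\|\Gamma\|_{\HS}^{2} = \sum_{k}\gamma_{k}^{2} \leq (\sum_{k}\gamma_{k})^{2} = \|\Gamma\|_{\tr}^{2}$. Under this choice of $t$, the linear exponent equals $c/2$ and the quadratic exponent becomes $(c^{2}/2)\,\|\Gamma\|_{\HS}^{2}/\|\Gamma\|_{\tr}^{2} \leq c^{2}/2$, so the total exponent is at most $c/2 + c^{2}/2$.

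Selecting $c$ small enough to force $c/2 + c^{2}/2 \leq \log 2$ (for instance $c = 1/4$ works) delivers $\E[e^{(t/2)\|X\|^{2}}] \leq 2$. Setting $K = 2/t = (2/c)\,L^{2}\|\Gamma\|_{\tr}$ then produces $\|\,\|X\|^{2}\,\|_{\psi_{1}} \leq C\,L^{2}\|\Gamma\|_{\tr}$ with $C = 2/c$, which is precisely the claim after identifying $L = \|X\|_{\psi_{2},\Gamma}$.

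There is no real obstacle beyond bookkeeping: the probabilistic content is already encoded in Lemma \ref{lem:mgf_bound_subgaussian_hilbert_space}, and what remains is a one-parameter optimization of the exponent together with the elementary operator-norm inequalities $\|\Gamma\|_{\op}\leq\|\Gamma\|_{\HS}\leq\|\Gamma\|_{\tr}$. The only subtlety worth flagging is that the two standard definitions of the $\psi_{1}$ norm (via $\E[\exp(|Y|/K)]$ versus via moments $\sup_{q\geq 1} q^{-1}(\E|Y|^{q})^{1/q}$) agree up to a universal multiplicative constant, which is absorbed into $C$; on the moment side the same conclusion could alternatively be reached by integrating the Bernstein-type tail bound~\eqref{eqn:Bernstein_bound_subgaussian_centered} lifted to $\bH$ via the approximation argument used in Lemma \ref{lem:mgf_bound_subgaussian_hilbert_space}.
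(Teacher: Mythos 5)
Your argument is correct, and there is no circularity: Lemma~\ref{lem:mgf_bound_subgaussian_hilbert_space} is established independently of Lemma~\ref{lem:squared_norm_is_subexp}, your choice $t=c(L^{2}\|\Gamma\|_{\tr})^{-1}$ with $c=1/4$ is admissible because $\|\Gamma\|_{\op}\leq\|\Gamma\|_{\tr}$ gives $c<\|\Gamma\|_{\tr}/(2\|\Gamma\|_{\op})$, and the exponent bookkeeping ($c/2+c^{2}/2\leq\log 2$ using $\|\Gamma\|_{\HS}\leq\|\Gamma\|_{\tr}$) is right, so the Orlicz characterization yields $\|\,\|X\|^{2}\,\|_{\psi_{1}}\leq (2/c)L^{2}\|\Gamma\|_{\tr}$. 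However, you take a genuinely different route from the paper. The paper's proof is a short, self-contained coordinatewise argument: by Parseval's identity $\|X\|^{2}=\sum_{k}\langle X,e_{k}\rangle^{2}$, the triangle inequality for the $\psi_{1}$ norm and the identity $\|\xi^{2}\|_{\psi_{1}}=\|\xi\|_{\psi_{2}}^{2}$ (Lemma 2.7.6 in \cite{Vershynin2018_Cambridge}) reduce the claim to the one-dimensional bound $\|\langle X,e_{k}\rangle\|_{\psi_{2}}\lesssim \|X\|_{\psi_{2},\Gamma}\sqrt{\langle\Gamma e_{k},e_{k}\rangle}$, and summing over $k$ produces $\|\Gamma\|_{\tr}$ directly. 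Your approach instead imports the heavier MGF machinery of Lemma~\ref{lem:mgf_bound_subgaussian_hilbert_space}, which itself rests on the Gaussian-comparison Lemma~\ref{lem:mgf_bound_subgaussian}, Lemma~\ref{lem:mgf_bound_subgaussian_centered}, and a finite-dimensional approximation; in exchange you get an explicit numerical constant and, implicitly, the sharper centered sub-exponential behavior of $\|X\|^{2}-L^{2}\|\Gamma\|_{\tr}$ governed by $(\|\Gamma\|_{\HS},\|\Gamma\|_{\op})$ rather than $\|\Gamma\|_{\tr}$ alone, which is more than the lemma itself asserts. For the statement as written, the paper's argument is the more economical one, but yours is valid and the extra information it carries is exactly what distinguishes the refined bound~\eqref{eqn:Hanson-Wright_ineq_Hilbert_space_nonnegative_diagonal} from the cruder Bernstein bound~\eqref{lem:Hanson-Wright_ineq_Hilbert_space_linear_part_less_sharp}. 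One cosmetic point: admissibility requires the strict inequality $t<(2L^{2}\|\Gamma\|_{\op})^{-1}$, so phrase the constraint on $c$ strictly (your final choice $c=1/4$ already satisfies it).
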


\begin{proof}[Proof of Lemma \ref{lem:squared_norm_is_subexp}]
Let $(e_{k})_{k=1}^{\infty}$ be a CONS of $\bH$. By Parseval's identity, $\|X\|^{2} = \sum_{k=1}^{\infty} \langle X, e_{k} \rangle^{2}$. Since $\|\cdot\|_{\psi_{1}}$ for real-valued random variables is a norm, we have by triangle inequality that 
\[
\left\| \|X\|^{2} \right\|_{\psi_{1}} \leq \sum_{k=1}^{\infty} \left\| \langle X, e_{k} \rangle^{2} \right\|_{\psi_{1}} = \sum_{k=1}^{\infty} \left\| \langle X, e_{k} \rangle \right\|_{\psi_{2}}^{2},
\]
where the last step follows from Lemma 2.7.6 in \cite{Vershynin2018_Cambridge}. Since $X \sim \subg(\Gamma)$ with mean zero, we have for any $\lambda > 0$, 
\[
\E \left[ e^{\lambda \langle X, e_{k} \rangle} \right] \leq e^{{\lambda^{2} \over 2} \|X\|_{\psi_{2}}^{2} \langle \Gamma e_{k}, e_{k} \rangle},
\]
which implies that there exists a universal constant $C > 0$ such that 
\[
\| \langle X, e_{k} \rangle \|_{\psi_{2}} \leq C \|X\|_{\psi_{2}} \sqrt{\langle \Gamma e_{k}, e_{k} \rangle}.
\]
Then, 
\[
\left\| \|X\|^{2} \right\|_{\psi_{1}} \leq \sum_{k=1}^{\infty} C^{2} \|X\|_{\psi_{2}}^{2} \langle \Gamma e_{k}, e_{k} \rangle = C^{2} \|X\|_{\psi_{2}}^{2} \|\Gamma\|_{\tr}.
\]
\end{proof}

Let $r$ be a non-negative integer and $0 \leq f < 1$. For $s = r+f \geq 0$, we define the (generalized) sum
\[
\sum_{i=1}^{s} a_{i} := \sum_{i=1}^{r} a_{i} + f a_{r+1}.
\]

\begin{lem}[Monotone rearrangement]
\label{lem:ordered_sum_inequality}
For any $a_{1},\dots,a_{n} \in \bR$ and $b_{1},\dots,b_{n} \in [0,1]$, we have 
\[
\sum_{i=1}^{n} a_{i} b_{i} \leq \sum_{i=1}^{s} a_{(i)},
\]
where $a_{(1)} \geq \cdots \geq a_{(n)}$ and $s = \sum_{i=1}^{n} b_{i}$.
\end{lem}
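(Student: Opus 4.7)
The plan is to reduce to the sorted case and then carry out an Abel-summation argument that compares $\sum_i a_ib_i$ against the ``greedy'' assignment placing as much mass as possible on the largest $a_i$'s. Since the right-hand side depends only on the order statistics of $(a_i)$, and the left-hand side is invariant under simultaneously permuting the pairs $(a_i,b_i)$, I would first relabel indices so that $a_1\geq a_2\geq\cdots\geq a_n$; the claim then becomes $\sum_{i=1}^n a_ib_i \leq \sum_{i=1}^r a_i + f\,a_{r+1}$ with $s=r+f$, $r\in\{0,1,\ldots,n\}$, $f\in[0,1)$.

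Next I would introduce the greedy weights $b_1^{*}=\cdots=b_r^{*}=1$, $b_{r+1}^{*}=f$, and $b_i^{*}=0$ for $i>r+1$, for which $\sum_i b_i^{*}=s$ and the partial sums $B_k^{*}:=\sum_{i\leq k}b_i^{*}$ equal $\min(k,s)$ for every $k\in\{1,\ldots,n\}$. By construction the right-hand side of the lemma is exactly $\sum_i b_i^{*}a_i$, so it suffices to show $\sum_i b_ia_i\leq \sum_i b_i^{*}a_i$. The actual partial sums $B_k:=\sum_{i\leq k}b_i$ satisfy the two elementary bounds $B_k\leq k$ (since each $b_i\leq 1$) and $B_k\leq s$ (since $b_i\geq 0$ and $\sum_i b_i=s$), hence $B_k\leq \min(k,s)=B_k^{*}$ for every $k$.

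The main step is then Abel summation. Writing $a_i=a_n+\sum_{k=i}^{n-1}(a_k-a_{k+1})$ and swapping the order of summation gives, for any weights $w_i$ with $\sum_i w_i=s$,
\[
\sum_{i=1}^n w_i a_i = s\,a_n + \sum_{k=1}^{n-1}(a_k-a_{k+1})\,W_k,\qquad W_k:=\sum_{i\leq k}w_i.
\]
Applying this identity to both $b_i$ and $b_i^{*}$ and subtracting yields
\[
\sum_{i=1}^n b_i^{*}a_i - \sum_{i=1}^n b_ia_i = \sum_{k=1}^{n-1}(a_k-a_{k+1})(B_k^{*}-B_k)\geq 0,
\]
where non-negativity of each term follows from $a_k-a_{k+1}\geq 0$ (sortedness) and $B_k^{*}\geq B_k$ (previous step). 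This is exactly the claimed inequality.

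The only real obstacle is careful bookkeeping around the generalized sum: one must verify that $B_k^{*}=\min(k,s)$ correctly encodes $\sum_{i=1}^s a_{(i)}=\sum_{i=1}^r a_i+f\,a_{r+1}$ in both the integer case $f=0$ and the fractional case $f\in(0,1)$, and handle the degenerate edge case $r=n$ (which forces $f=0$ and $b_i\equiv 1$, so $a_{r+1}$ is vacuous and the inequality is an equality). Beyond this bookkeeping I do not anticipate any substantive difficulty.
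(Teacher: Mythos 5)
Your proof is correct, and it takes a genuinely different (and more explicit) route than the paper. The paper argues probabilistically: it defines a random variable $X$ with $\Prob(X=a_i)=b_i/s$ and a random variable $Y$ placing mass $1/s$ on the top $r$ order statistics and $f/s$ on $a_{(r+1)}$, and concludes $\E[X]\leq \E[Y]$ by noting that mass can always be shifted from $X$ onto the larger values to form $Y$ — in effect a first-order stochastic dominance argument, stated rather tersely. Your deterministic version via Abel summation is the exact pointwise counterpart of that dominance: the partial-sum comparison $B_k\leq \min(k,s)=B_k^{*}$ is precisely the statement $\Prob(X\geq t)\leq\Prob(Y\geq t)$ in disguise, and the identity $\sum_i w_i a_i = s\,a_n+\sum_{k=1}^{n-1}(a_k-a_{k+1})W_k$ (valid since $W_n=s$ for both weight vectors) turns it into the inequality term by term, using only $a_k-a_{k+1}\geq 0$ after sorting. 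What your approach buys is a fully rigorous, self-contained argument with the "mass shifting" made explicit, plus a clean treatment of the bookkeeping ($B_k^{*}=\min(k,s)$ encoding the generalized sum, and the edge cases $r=n$, $f=0$, $s=0$); what the paper's buys is brevity and an intuitive coupling picture. Either proof would serve; yours could be substituted without change.
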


By definition, we clearly have $\sum_{i=1}^{s} a_{i} \leq \max\{\sum_{i=1}^{r} a_{i}, \, \sum_{i=1}^{r+1} a_{i}\}$, and for $0 \leq s \leq 1$, $\sum_{i=1}^{s} = s a_1 \leq s a_{(1)}$. Moreover, Lemma~\ref{lem:ordered_sum_inequality} is tighter than the classical inequality $\sum_{i=1}^{n} a_i b_i \leq |a|_{\infty} |b|_1$ because $a_{(i)} \leq |a|_{\infty}$. Using the order statistics structure, we are able to obtain the exponential decay of error result in the $K$-means SDP clustering problem in Section~\ref{subsec:rate_SDP_kernel_Kmeans}.

\begin{proof}[Proof of Lemma~\ref{lem:ordered_sum_inequality}]
Write $s = r + f$, where $r$ is a non-negative integer and $f \in [0,1)$. Let $X$ be a random variable taking values in $\{a_{1},\dots,a_{n}\}$ with the probability mass function $\Prob(X = a_{i}) = b_{i} / s$. Let $Y$ be another random variable taking values in $\{a_{(1)},\dots,a_{(n)}\} = \{a_{1},\dots,a_{n}\}$ with the probability mass function $\Prob(Y = a_{(j)}) = 1/s$ for $1 \leq j \leq r$, $\Prob(Y = a_{(r+1)}) = f/s$, and $\Prob(Y = a_{(j)}) = 0$ for $r+2 \leq j \leq n$. Since $b_{i} \in [0, 1]$, we can always shift a non-negative proportion of mass from $X$ to $Y$. Thus we have $\E[X] \leq \E[Y]$ and the lemma follows.
\end{proof}

\begin{lem}
\label{lem:some_ineq_feasible_set}
Let $Z^{*}$ be defined in (\ref{eqn:Kmeans_true_membership_matrix}). Then for any $Z \in \sC$ defined in (\ref{eqn:clustering_Kmeans_sdp}), we have 
\begin{align}
\label{eqn:ineq_1_feasible_set}
|Z^{*} - Z^{*} Z Z^{*}|_{1} = |Z^{*} - Z^{*} Z|_{1} =& 2 \sum_{1 \leq k \neq m \leq K} |Z_{G_{k}^{*} G_{m}^{*}}|_{1}, \\
\label{eqn:ineq_2_feasible_set}
\|(I-Z^{*}) Z (I-Z^{*})\|_{\tr} \leq& {|Z^{*} - Z^{*} Z|_{1} \over 2 \underline{n}}, \\
\label{eqn:ineq_3_feasible_set}
|Z^{*} - Z^{*} Z|_{1} \leq |Z^{*} - Z|_{1} \leq& {2 n \over \underline{n}} |Z^{*} - Z^{*} Z|_{1}.
\end{align}
\end{lem}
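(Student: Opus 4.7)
The plan is to introduce block sums $Q_{km}=|Z_{G_k^* G_m^*}|_1=\sum_{i\in G_k^*,\, j\in G_m^*}Z_{ij}$ (the absolute values can be dropped because $Z\geq 0$) and exploit that $Z\in\sC$ is doubly stochastic: $Z\vone_n=\vone_n$ together with $Z^T=Z$ gives $\sum_m Q_{km}=n_k$ for every $k$. In particular $Z^*$ is an orthogonal projection (hence $(Z^*)^2=Z^*$) that acts as block averaging, so $Z^*Z$ and $Z^*ZZ^*$ are block-constant matrices whose entries can be written explicitly in terms of $Q_{km}$ and of $S_{kj}:=\sum_{\ell\in G_k^*}Z_{\ell j}$.

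For (\ref{eqn:ineq_1_feasible_set}), I will compute the entries of $Z^*-Z^*Z$ and $Z^*-Z^*ZZ^*$ block by block. The diagonal blocks use $S_{kj}\leq 1$ (column sums equal $1$) to drop absolute values, and both expressions collapse to $2\sum_{k\neq m}Q_{km}$ after applying the balance identity $n_k-Q_{kk}=\sum_{m\neq k}Q_{km}$.

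For (\ref{eqn:ineq_2_feasible_set}), observe that $M:=(I-Z^*)Z(I-Z^*)\succeq 0$ because $I-Z^*$ is an orthogonal projection and $Z\succeq 0$, so $\|M\|_{\tr}=\tr(M)$. Using $(I-Z^*)^2=I-Z^*$ and cyclicity, $\tr(M)=\tr(Z(I-Z^*))=K-\tr(ZZ^*)=\sum_k(n_k-Q_{kk})/n_k$, which is bounded by $(1/\underline{n})\sum_k(n_k-Q_{kk})=|Z^*-Z^*Z|_1/(2\underline{n})$ via (\ref{eqn:ineq_1_feasible_set}).

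For (\ref{eqn:ineq_3_feasible_set}), the lower inequality follows from the identity $Z^*-Z^*Z=Z^*(Z^*-Z)$ together with the averaging estimate $|Z^*Y|_1\leq|Y|_1$ (immediate from $(Z^*Y)_{ij}$ being a $G_k^*$-average in $i$, triangle inequality, and summing). For the upper inequality, I will use the decomposition $Z-Z^*=(I-Z^*)Z-(Z^*-Z^*Z)$ and further split $(I-Z^*)Z=(I-Z^*)Z(I-Z^*)+(I-Z^*)ZZ^*$. The first summand will be handled via the PSD inequality $|M_{ij}|\leq\sqrt{M_{ii}M_{jj}}$ and Cauchy--Schwarz, which gives $|M|_1\leq n\tr(M)$; combined with (\ref{eqn:ineq_2_feasible_set}) this yields the bound $nT/(2\underline{n})$ with $T:=|Z^*-Z^*Z|_1$. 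The main obstacle is the cross term $|(I-Z^*)ZZ^*|_1$, since $(I-Z^*)ZZ^*$ is not PSD; I plan to handle it by direct computation of its block entries, which equal $(Q_{i,m}-Q_{km}/n_k)/n_m$ on the $G_k^*\times G_m^*$ block, where $Q_{i,m}=\sum_{j\in G_m^*}Z_{ij}$. For $m\neq k$ a triangle inequality on $\sum_{i\in G_k^*}|Q_{i,m}-Q_{km}/n_k|$ gives $2Q_{km}$, while for $m=k$ the row sum identity $\sum_m Q_{i,m}=1$ lets me rewrite $Q_{i,k}-Q_{kk}/n_k=\sum_{m'\neq k}(Q_{km'}/n_k-Q_{i,m'})$ and reduce to the off-diagonal case. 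The resulting bound $|(I-Z^*)ZZ^*|_1\leq 2T$, together with the first summand, yields $|Z-Z^*|_1\leq\bigl(n/(2\underline{n})+3\bigr)T$; since $K\geq 2$ forces $n\geq 2\underline{n}$, this is bounded by $(2n/\underline{n})T$, as desired.
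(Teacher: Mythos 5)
Your proposal is correct, but it is worth noting that the paper does not actually prove this lemma at all: its "proof" is a one-line citation to Lemma 1 of Giraud and Verzelen (2018). Your argument is therefore a genuinely self-contained alternative, and I checked it step by step: the block sums $Q_{km}$ with the doubly-stochastic identity $\sum_m Q_{km}=n_k$ (from $Z\vone_n=\vone_n$, $Z^T=Z$, $Z\geq 0$) do give both equalities in \eqref{eqn:ineq_1_feasible_set}, since the diagonal-block contribution $\sum_k(n_k-Q_{kk})$ equals the off-diagonal mass $\sum_{k\neq m}Q_{km}$; the trace computation $\tr\bigl((I-Z^*)Z(I-Z^*)\bigr)=K-\tr(ZZ^*)=\sum_k(n_k-Q_{kk})/n_k$ together with positive semidefiniteness yields \eqref{eqn:ineq_2_feasible_set}; the averaging contraction $|Z^*Y|_1\leq |Y|_1$ gives the lower half of \eqref{eqn:ineq_3_feasible_set}; and your split $Z-Z^*=(I-Z^*)Z(I-Z^*)+(I-Z^*)ZZ^*-(Z^*-Z^*Z)$, with $|M|_1\leq n\tr(M)$ for the PSD piece and the entrywise bound $2T$ for the cross term, gives $|Z-Z^*|_1\leq\bigl(n/(2\underline{n})+3\bigr)T$, which is in fact slightly sharper than the stated $2n/\underline{n}$ once $n\geq 2\underline{n}$. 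The only loose end is your appeal to $K\geq 2$ at the very last step, which the lemma does not formally assume; you should add one sentence observing that for $K=1$ the constraints $Z\succeq 0$, $Z\vone_n=\vone_n$, $\tr(Z)=1$ force $Z=\vone_n\vone_n^T/n=Z^*$, so all three claims are trivial in that case. What the paper's citation buys is brevity; what your derivation buys is a transparent, elementary proof living entirely inside this paper (and a marginally better constant), which would make the appendix self-contained.
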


\begin{proof}[Proof of Lemma \ref{lem:some_ineq_feasible_set}]
See Lemma 1 in \cite{GiraudVerzelen2018}. 
\end{proof}

\section*{Acknowledgements}
The authors would like to thank an anonymous referee and an Associate Editor for their many careful comments that improved the quality of this paper. X. Chen's research was supported in part by NSF DMS-1404891, NSF CAREER Award DMS-1752614, UIUC Research Board Awards (RB17092, RB18099), and a Simons Fellowship. Y. Yang's research was supported in part by NSF DMS-1810831.

\bibliographystyle{plain}
\bibliography{clustering_sdp}

\end{document}